\newtheorem{theorem}{Theorem}[section]
\newtheorem{lemma}[theorem]{Lemma}
\newtheorem{proposition}[theorem]{Proposition}
\theoremstyle{remark}
\newtheorem{remark}[theorem]{Remark}
\theoremstyle{definition}
\newtheorem{example}[theorem]{Example}
\newtheorem{ass}{Assumption}
\newtheorem{assu}{Assumption}
\numberwithin{equation}{section}
\newcommand{\itv}{(\alpha, \beta)}
\newcommand{\citv}{[\alpha,\beta)}
\newcommand{\itvla}{(t_\la,\beta)}
\newcommand{\itvlat}{(\widetilde t_\la,\beta)}
\newcommand{\citvla}{[t_\la,\beta)}
\newcommand{\citvlatil}{[\widehat t_\la,\beta)}
\newcommand\dist{\operatorname{dist}}
\newcommand\la{\lambda}
\newcommand\de{\delta}
\newcommand\sa{\sigma}
\newcommand\Dis{{\mathfrak D}}
\newcommand\cA{{\mathcal A}}
\newcommand\cD{{\mathcal D}}
\newcommand\cH{{\mathcal H}}
\newcommand\cR{{\mathcal R}}
\newcommand{\N}{{\mathbb N}}
\newcommand{\RR}{{\mathbb R}}
\newcommand{\CC}{{\mathbb C}}
\newcommand\pt{{\displaystyle{\frac{\partial}{\partial t}}}}
\newcommand\dt{{\displaystyle{\frac{\d}{\d t}}}}
\newcommand\dx{{\displaystyle{\frac{\d}{\d x}}}}
\DeclareMathOperator\Real{Re}
\DeclareMathOperator\Imag{Im}
\renewcommand\Re{\Real}
\renewcommand\Im{\Imag}
\renewcommand\d{{\rm d}}
\newcommand{\e}{{\rm e}}
\newcommand{\ii}{{\rm i}}
\renewcommand\o{{\rm o}}
\renewcommand\O{{\rm O}}
\DeclareMathOperator\ran{ran}
\newcommand\ds{\displaystyle}
\newcommand\wt{\widetilde}
\newcommand\wh{\widehat}
\newcommand{\ov}[1]{\overline{#1}}
\newcommand\mydot{\,\cdot\,}
\newcommand{\defeq}{\mathrel{\mathop:}=}
\newcommand\spt{\sigma_{\rm p}}
\newcommand\sess{\sigma_{\rm ess}}
\newcommand\sessreg{\sigma_{\rm ess}^{\rm \,r}}
\newcommand\sesssing{\sigma_{\rm ess}^{\rm \,s}}
\newcommand\tpi{{\pi}}
\newcommand\tka{{\varkappa}}
\newcommand{\sgn}{\text{sgn}}
\begin{document}

\subjclass[2010]{47A10, 34L05, 47A55, 76E99} 
\keywords{Essential spectrum, system of differential equations, operator matrix, singular differential operator, 
Schur complement, Nevanlinna function, magnetohydrodynamics, stellar equilibrium model.}

\title[Essential spectrum of matrix differential operators]{Analysis of the essential spectrum of \\ singular matrix differential operators}

\author{O.\,O.\ Ibrogimov}
\address[O.\,O.\ Ibrogimov]{%
	Mathematisches Institut, 
	Universit\"{a}t Bern,
	Alpeneggstr.\ 22,
	3012 Bern, Switzerland}
\email{orif.ibrogimov@math.unibe.ch}

\author{P.\ Siegl}
\address[P.\ Siegl]{%
	Mathematisches Institut, 
	Universit\"{a}t Bern,
	Alpeneggstr.\ 22,
	3012 Bern, Switzerland
	\& on leave from Nuclear Physics Institute ASCR, 25068 \v Re\v z, Czech Republic}
\email{petr.siegl@math.unibe.ch}

\author{C.\ Tretter}
\address[C.\ Tretter]{%
	Mathematisches Institut, 
	Universit\"{a}t Bern,
	Sidlerstr.\  5,
	3012 Bern, Switzerland
	\& Matematiska institutionen, Stockholms universitet, 106 91 Stockholm,~Sweden}
\email{tretter@math.unibe.ch}

\date{\today}

\dedicatory{Dedicated to Professor Heinz Langer on the occasion of his 80th birthday}

\begin{abstract}
\!A complete analysis of the essential spectrum of matrix-differential operators $\cA$ of the form
\begin{equation}
\label{mo}
\begin{pmatrix} 
-\dt p \dt + q & -\dt b^* \! + c^* \\[2mm]
\hspace{6mm} b \dt + c & \hspace{4mm} D
\end{pmatrix}
\quad \text{in } \ L^2(\itv) \oplus \bigl(L^2(\itv)\bigr)^n
\end{equation} 
singular at $\beta\in\RR\cup\{\infty\}$ is given; the coefficient functions $p$, $q$ are scalar real-valued with $p>0$, $b$, $c$ are vector-valued, and $D$ is Hermitian matrix-valued.
The so-called ``singular part of the essential spectrum'' $\sesssing(\cA)$ is investigated systematically. Our main results include an explicit description of $\sesssing(\cA)$, criteria for its absence and presence;
an analysis of its topological structure and of the essential spectral radius. Our key tools are: the asymptotics of the leading coefficient $\tpi(\cdot,\la)=p-b^*(D-\la)^{-1}b$ 
of the first Schur complement of \eqref{mo}, a scalar differential operator but non-linear in $\la$;
the Nevanlinna behaviour in $\la$ of certain limits $t\!\nearrow\!\beta$ of functions formed out of the coefficients in~\eqref{mo}.
The efficacy of our results is demonstrated by several applications; in particular, we prove a conjecture on the essential spectrum of some symmetric stellar equilibrium models.
\end{abstract}

\maketitle

\section{Introduction}
The interesting spectral phenomena of matrix differential operators have attracted a lot of attention in recent years. In particular, the essential spectrum and 
the different mechanisms giving rise to it were studied in many papers, see e.g.\ \cite{FMM00}, \cite{MNT02}, \cite{KN03}, \cite{MT07}, \cite{KLN08}, \cite{QCH11}. Often the motivation for 
the particular examples studied therein came from mathematical physics, in particular, magnetohydrodynamics, see \cite{Kako87}. The first paper where the essential 
spectrum of general singular matrix differential operators of the form \eqref{mo} with \emph{scalar} function $D$~was analysed and described explicitly is the 
recent work \cite{ILLT13}. Nevertheless, the results therein did not provide the full solution for the essential spectrum of a problem in symmetric stellar equilibrium models; for the essential spectrum due to the singularity at the boundary of the star only a conjecture was~made.

Here we undertake a systematic analysis of the essential spectrum of matrix differential operators \eqref{mo} 
for the case of matrix-valued $D$.
Under considerably weaker assumptions than in \cite{ILLT13}, we give an explicit description of the part of the essential spectrum caused by the singularity at $\beta$ which we call 
\textit{singular part of the essential spectrum} and which we denote by $\sesssing(\cA)$.
Furthermore, we establish criteria that allow us to give a complete classification and characterization of $\sesssing(\cA)$ and we investigate its topological structure. Our explicit characterization allows to decide when the essential spectrum is bounded and, in this case, to derive a formula for, or estimate, the essential spectral radius 
$r_{\rm{ess}}(\cA) := \sup\, \{|\la|: \la\in\sess(\cA)\}$, cf.\ \cite{Nussbaum70}. Results and particular assumptions
in earlier papers on examples of operators \eqref{mo} are shown to be special cases of our abstract classification. 
Moreover, our weaker assumptions allow us to prove the conjecture in \cite{ILLT13} on the singular part of the essential spectrum for the symmetric stellar equilibrium model from \cite{BEY95}.

The novelty of this paper is that it characterizes all parts and features of the essential spectrum of general singular matrix differential operators \eqref{mo} 
in terms of the coefficients of the associated first Schur complement $S(\la)$ which is a scalar differential operator 
defined for, and depending non-linearly on, $\la \notin \sigma(D)$:
\begin{align}
\label{sc}
S(\la) &= -\dt p \dt \!+\! q \!-\! \la \!-\! \left(- \dt  b^* \!\! +\! c^* \right)(D-\la)^{-1} \left(b \dt \!+\! c \right ) 
\quad \text{in } \ L^2(\itv) \\
\label{sc1}
&=:- \pt \tpi(\cdot,\la)\pt + \ii\Bigl(r(\cdot,\la)\pt + \pt r(\cdot,\la)\Bigr) +   \tka (\cdot,\la);
\end{align}
see \eqref{pi.r.varkap.def} for the precise form of the coefficient functions. 
It turns out that a particular role is played by the leading coefficient $\tpi(t,\la) := p(t)-b(t)^*(D(t)-\la)^{-1}b(t)$, $t\in [\alpha,\beta)$, of $S(\la)$ and by its asymptotic properties:

1. The \emph{regular part} $\sessreg(\cA)$ of the essential spectrum, defined as the closure of the union of the essential spectra of the restrictions $\cA_{[\alpha,\beta_r]}$ of $\cA$ to \emph{regular} subintervals $[\alpha,\beta_r]\subset [\alpha,\beta)$, is identified with the points $\la$ for which $\tpi(\cdot,\la)$ has a zero in $[\alpha,\beta_r]$.

2. The \emph{singular part} $\sesssing(\cA)$ of the essential spectrum, defined as the part that is not present for any restriction $\cA_{[\alpha,\beta_r]}$ to a regular subinterval, is described in terms of 
limits $t\nearrow \beta$ of some functions formed out of the coefficients of the Schur complement; 
e.g.\ if $\beta=\infty$, 
\begin{equation}
\label{sesssing}
  \la \in \sesssing(\cA) \ \iff \ \left( \lim_{t\nearrow\beta}\frac{r(t,\la)}{\tpi(t,\la)} \right)^2 - \lim_{t\nearrow\beta}\frac{\varkappa(t,\la)}{\tpi(t,\la)} \ge 0
\end{equation}
for points $\la\in\RR \setminus (\sessreg(\cA) \cup \Lambda_{\beta}(D))$ where $\Lambda_{\beta}(D) \subset\RR$ is the set of (finite) accumulation points of the eigenvalues of $D(t)$ as $t\nearrow\infty$.

3. The \emph{absence resp.\ presence of the singular part} $\sesssing(\cA)$ of the essential spectrum is fully characterized in terms of the coefficients of 
the  asymptotic expansion of $\tpi(\cdot,\la)$ as $t\nearrow \beta$, 
\begin{equation}
\label{piasy}
\tpi(t,\la) = \tpi_0(\la) + \tpi_1(\la)(t-\beta) + \mathcal{R}(t,\la),
\end{equation}
provided the latter exists and the remainder has certain asymptotic properties; 
more precisely,
\[
 \tpi_0(\la) \neq 0, \ \ \tpi_1(\la) \neq 0 \ \Longrightarrow \ \sesssing(\cA) \setminus  \Lambda_{\beta}(D) = \emptyset. 
\]

4. The \emph{topological structure of} $\sesssing(\cA)$ is classified by means of certain coefficients, some of which naturally arise from the canonical representation of Nevanlinna functions, obtained from the coefficients of the Schur complement such \vspace{-2mm}  as
\begin{alignat*}{2}
g_\beta & = \lim_{\la\to \infty} \frac{-\tpi_2(\la)}{\la}, \quad &
-\tpi_2(\la) &= -\frac 12 \lim_{t\nearrow\beta}\frac{\partial^2}{\partial t^2}\tpi(t,\la), \\[-1.5mm]
\psi_\beta & = \lim_{\la\to \infty} \frac{-\varkappa_0(\la)}{\la}, \quad & 
-\varkappa_0(\la) & =  -\lim_{t\nearrow\beta} \varkappa(t,\la). \vspace{1mm}
\end{alignat*}	
In particular, our classification allows to characterize all cases where the essential spectral radius is finite. 
If e.g.\ $g_\beta > 0$ and $g_\beta + 4 \psi_\beta \ne 0$ and the eigenvalues of $D(t)$ have limits of which $j_0$ are proper, then the closure of the solution set of the inequality in \eqref{sesssing} consists \vspace{1mm} of

\begin{tabular}{ll}
-- at most $j_0+1$ compact intervals if $\,g_{\beta} + 4\psi_{\beta}>0$, \\
-- at most $j_0$ compact intervals and two unbounded intervals 
   if $\,g_{\beta} + 4\psi_{\beta}<0$.
\end{tabular} 

\smallskip

There are three crucial differences compared to earlier papers such as \cite{FMM00}, \cite{MNT02}, \cite{KN03}, \cite{KLN08}, and \cite{Kako87}, \cite{BEY95}. 
First we do not only consider special classes or examples of matrix differential operators \eqref{mo}; secondly we do not only 
consider them under particular assumptions which rule out possibilities for the singular part of the essential spectrum; and thirdly 
our methodology is based on the analysis of Schur complement. The latter allows us to use results from the theory of scalar differential operators and distinguishes our paper also from
the recent paper \cite{QCH11} for the special case of scalar $D$. While \cite{QCH11} shows how to determine $\sess(\cA)$ by a transformation to a Hamiltonian system and relies on a limit-point/circle classification, our method provides an explicit formula for $\sess(\cA)$ in terms of the original coefficient functions and is not restricted 
to the limit-circle case at $\beta$.

There are also essential differences to the earlier paper \cite{ILLT13} in at least three respects. Firstly, we cover~the more general case of 
matrix-valued and not only scalar-valued coefficients $D$. Secondly, and more importantly, we prove all results under much weaker assumptions 
than in \cite{ILLT13}; major improvements include that\vspace{1mm}~e.g.

\begin{tabular}{ll}
-- the eigenvalues of $D(t)$ are no longer assumed to have (proper or improper) limits as $t\nearrow \beta$, \\[1.1mm]
-- $\|(D-\la)^{-1}b\|$ and $\|(D-\la)^{-1}c\|$ no longer need to be bounded near $\beta$,\\[0.4mm]
-- $\tpi(\cdot,\la)$, {\small$\dfrac{1}{\tpi(\cdot,\la)}$}, $r(\cdot,\la)$, and $\varkappa(\cdot,\la)$ no longer need to be bounded near $\beta$.
\end{tabular} 
\\
Thirdly, the weaker assumptions required the use of new techniques to prove e.g.\ the relation between the essential spectra of the matrix operator 
\eqref{mo} and its Schur complement. Fourthly, the weaker assumptions enabled us to cope with the singularity at the boundary of the star in the 
symmetric stellar equilibrium model for which only a conjecture was made in \cite{ILLT13}.
Finally, we give the first comprehensive and systematic analysis for the singular part of the essential spectrum in terms 
of the asymptotic coefficients of the leading coefficient $\tpi(\cdot,\la)$ as $t\nearrow\beta$ of the Schur complement.

The paper is organized as follows. Section 2 contains the operator theoretic framework for the singular matrix differential operator \eqref{mo} and its Schur complement \eqref{sc}. 
Section 3 is dedicated to the characterization of the regular and singular part or the essential spectrum in terms of the Schur complement. 
Section 4 provides criteria for $\sesssing(\cA)$ to be empty and, if it is not empty, an explicit description in terms of certain limits of functions formed out
of the coefficients of the Schur complement. Section 5 contains some useful 
sufficient conditions for the assumptions in our main results and a more elegant formula for $\sesssing(\cA)$. Section 6 deals with the 
topological structure of $\sesssing(\cA)$. Section 7 shows that the problems considered in earlier works concern special examples of our 
general operators \eqref{mo} and special cases of our abstract classification in terms of $\tpi_0(\la)$, $\tpi_1(\la)$. In Section 8, we 
prove the conjecture that the singular part of the essential spectrum for the symmetric stellar equilibrium model is empty.
      

\section{Singular matrix differential operators and associated Schur complement} 
\label{sec:A.def}

In this section, we introduce the operator setting for matrix differential operators of the form  \eqref{mo} and the associated Schur complement \eqref{sc}, 
together with some basic assumptions. To this end, let $\alpha\in\RR$ and $\beta\in\RR\cup\{\infty\}$ with $\alpha<\beta$. On the interval $[\alpha,\beta)$, we introduce the scalar, vector, and matrix differential expressions
\begin{alignat*}{2}
  \tau_A &:= -\dt p \dt + q, & \qquad \tau_B &:= -\dt b^*  + c^*, \\[1ex]
  \tau_{B}^+ &:= b \dt + c, & \tau_{D} &:=D,
\end{alignat*}
with coefficient functions $p,q\!:\!\citv\!\to\!\RR$, $b\!=\!(b_i)^n_{i=1}$, $c\!=\!(c_i)^n_{i=1}\!:\!\citv\!\to\!\CC^n\!$, and $D\!=\!(d_{ij})_{i,j=1}^n\!:\!\citv\!\to\!\CC^{n\times n}$ satisfying the following assumptions; here $b^*$ and $c^*$ denote the (pointwise) row vector adjoints of $b$ and~$c$. 

\begin{ass}\label{ass:regularity}
Let $p\in C^1(\citv,\RR)$ with $p>0$, $q \in C(\citv,\RR)$, $b,c \in C^1(\citv,\CC^n)$ and $D\in C^1(\citv,\CC^{n\times n})$ with $D(t)^*=D(t)$ for $t\in\citv$.
\end{ass}
The differential expressions $\tau_A, \tau_B$, $\tau_B^+$, and $\tau_D$, respectively, induce operators
$A_0:L^2(\itv) \to L^2(\itv)$, $B_0:\bigl(L^2(\itv) \bigr)^n \to L^2(\itv)$, $C_0:L^2(\itv) \to \bigl(L^2(\itv)\bigr)^n$, and $D_0:\bigl(L^2(\itv)\bigr)^n \to \bigl(L^2(\itv)\bigr)^n$ 
on the domains
\begin{equation*}
\cD(A_0) := C_0^2(\itv),  \quad\cD(B_0) := \bigl(C_0^1(\itv)\bigr)^n,
\quad 
\cD(C_0) := C_0^1(\itv),  \quad\cD(D_0) := \bigl(C_0(\itv)\bigr)^n,
\end{equation*}
where $C_0^k(\itv)$, $k\in\N_0$, denotes the space of all functions $f\in C^k(\itv)$ with compact support in $\itv$.

In the Hilbert space $\cH:=L^2(\itv) \oplus \bigl(L^2(\itv)\bigr)^n$, we define the matrix differential operator
\begin{equation}\label{A02}
\begin{aligned} 
  \cA_0 & := \begin{pmatrix} A_0 & B_0 \\ C_0 & D_0 \end{pmatrix} =
   \left(\begin{array}{c|ccc} 
    -\dt p \dt + q & - \dt \ds\overline{b_1} + \overline{c_1} & \dots  & - \dt \overline{b_n} + \overline{c_n}\\[6pt]\hline\\[-8pt]
    b_1 \dt + c_1  & d_{11}                      & \dots  & d_{1n} \\
    \vdots         & \vdots                      & \ddots & \vdots \\
    b_n \dt + c_n  & d_{n1}                      & \dots  & d_{nn}  
   \end{array}\right)
\end{aligned}
\end{equation}
with domain $\cD(\cA_0) :=  \cD(A_0) \oplus \cD(B_0) = C_0^2(\itv) \oplus \bigl(C_0^1(\itv)\bigr)^n$. 

\begin{proposition}\label{prop:adj}
The operator matrix $\cA_0$ in \eqref{A02} is symmetric in $\cH$ with
\begin{equation}
\begin{aligned}
\label{A0adj}
  \cD(\cA_0^*) &\! =\! \biggl\{\binom{y_1}{y_2}\in \cH \colon
  y_1,\,py_1'+b^*y_2 \in {\rm AC_{loc}}(\citv), \\
   &\hspace{2.2cm} -\bigl(py_1'+b^*y_2\bigr)'\!+qy_1+c^*y_2 \in L^2(\itv), \ by_1'+cy_1+Dy_2 \in \bigl(L^2(\itv)\bigr)^n\biggr\}, \hspace*{-5mm}\nonumber \\
  \cA_0^*\binom{y_1}{y_2}
  &\!=\!\! 
  \begin{pmatrix} 
  -\bigl(py_1'+b^*y_2\bigr)'+qy_1+c^*y_2 \\[1.5ex]
  by_1'+cy_1+Dy_2 
  \end{pmatrix},
\end{aligned}
\end{equation}
and ${\rm dim\, ker}(\cA_0^*-\lambda)\le 2$ for $\la\in\mathbb C$.
\end{proposition}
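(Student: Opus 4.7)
The plan is to prove the three assertions in turn: symmetry, the explicit form of $\cA_0^*$, and the kernel bound. Symmetry is purely computational; the formula for $\cA_0^*$ is a standard distributional adjoint computation; and the bound $\dim\ker(\cA_0^*-\lambda)\le 2$ rests on the Schur-complement reduction that is the central tool of the whole paper.

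To show that $\cA_0$ is symmetric, I would expand $\langle\cA_0 x,y\rangle_\cH-\langle x,\cA_0 y\rangle_\cH$ into the sum of the four block contributions corresponding to $A_0,B_0,C_0,D_0$ for $x,y\in\cD(\cA_0)=C_0^2(\itv)\oplus(C_0^1(\itv))^n$. Integration by parts (twice on the $A_0$-block, once each on the $B_0$- and $C_0$-blocks) produces no boundary terms thanks to the compact supports of $x_1,x_2,y_1,y_2$, while the Hermiticity $D(t)^*=D(t)$ granted by Assumption A disposes of the $D_0$-block. The off-diagonal terms then combine to cancel, and symmetry follows.

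To identify $\cA_0^*$, I would characterize $y=(y_1,y_2)^\top\in\cH$ as lying in $\cD(\cA_0^*)$ by the boundedness of $x\mapsto\langle\cA_0 x,y\rangle_\cH$ on $\cD(\cA_0)$ in the $\cH$-norm, testing separately on the two block directions. Taking $x=(0,x_2)^\top$ with $x_2\in(C_0^1(\itv))^n$ and integrating the derivative in $B_0$ by parts in the distributional sense yields
\[
 \langle\cA_0 x,y\rangle_\cH=\langle x_2,\,by_1'+cy_1+Dy_2\rangle_{(L^2)^n},
\]
so boundedness forces $by_1'+cy_1+Dy_2\in(L^2(\itv))^n$ and identifies the second component of $\cA_0^*y$. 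Taking $x=(x_1,0)^\top$ with $x_1\in C_0^2(\itv)$ and integrating by parts twice produces
\[
 \langle\cA_0 x,y\rangle_\cH=\bigl\langle x_1,\,-(py_1'+b^*y_2)'+qy_1+c^*y_2\bigr\rangle_{L^2(\itv)}
\]
with derivatives understood distributionally. Boundedness of this functional then forces, first, $y_1\in\mathrm{AC_{loc}}(\citv)$ so that $py_1'+b^*y_2$ is a well-defined $L^2_{\rm loc}$-function, and second, $py_1'+b^*y_2\in\mathrm{AC_{loc}}(\citv)$ so that its classical derivative is a genuine function, with the whole expression lying in $L^2(\itv)$. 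Combining the two test cases yields the description \eqref{A0adj}.

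For the bound $\dim\ker(\cA_0^*-\lambda)\le 2$, I would invoke the Schur-complement reduction underlying \eqref{sc}. For $y\in\ker(\cA_0^*-\lambda)$, the second-component equation $(D-\lambda)y_2=-(by_1'+cy_1)$ is algebraic in $y_2$: whenever $D(t)-\lambda$ is pointwise invertible—in particular for every $\lambda\in\CC\setminus\RR$ since $D(t)$ is Hermitian—one may solve $y_2=-(D-\lambda)^{-1}(by_1'+cy_1)$ and substitute into the first-component equation, obtaining a scalar second-order linear ODE for $y_1$ whose leading coefficient is $\tpi(\cdot,\lambda)=p-b^*(D-\lambda)^{-1}b$. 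A direct computation using $D(t)^*=D(t)$ shows $\Im\tpi(t,\lambda)\neq 0$ whenever $\Im\lambda\neq 0$, so the ODE is non-degenerate and has a two-dimensional local solution space, bounding $\dim\ker(\cA_0^*-\lambda)$ by two. The main obstacle is extending this bound to those real $\lambda$ for which $D(t)-\lambda$ fails to be invertible on a non-empty subset of $\citv$; I would dispose of these either by (i) performing the Schur reduction on the open subset of $\citv$ where $D(t)-\lambda$ is invertible and extending uniqueness across the exceptional points using the continuity inherent in $\cD(\cA_0^*)$, or (ii) invoking the lower semicontinuity of $\lambda\mapsto\dim\ker(\cA_0^*-\lambda)$ to transfer the bound from the non-real axis to the real axis.
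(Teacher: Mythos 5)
Your treatment of the symmetry of $\cA_0$ and of the identification of $\cD(\cA_0^*)$ and $\cA_0^*$ is sound and follows the standard route; the paper gives no details here but defers to the proof of the analogous Proposition~2.3 in the scalar case of \cite{ILLT13}, which proceeds along the same lines (integration by parts with no boundary terms, then a distributional du Bois-Reymond argument to extract the regularity $y_1,\ py_1'+b^*y_2\in{\rm AC_{loc}}\citv$). The Schur-complement reduction is also the right mechanism for the kernel bound when $D(t)-\la$ is pointwise invertible: solving the second row for $y_2$, substituting, and rewriting the resulting scalar equation as a first-order linear system in $y_1$ and the quasi-derivative $\pi(\cdot,\la)y_1'-b^*(D-\la)^{-1}c\,y_1$ gives a two-dimensional solution space because the leading coefficient never vanishes. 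One small slip there: $\Im\pi(t,\la)=-\Im(\la)\,\|(D(t)-\la)^{-1}b(t)\|^2$ vanishes wherever $b(t)=0$, so it is not true that $\Im\pi(t,\la)\neq0$ for $\Im\la\neq 0$; what saves the argument is that in that case $\pi(t,\la)=p(t)>0$, so the leading coefficient is still nonzero.

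The genuine gap is in your handling of real $\la$ at which $D(t)-\la$ fails to be invertible. Fallback (ii) rests on a false principle: for a family $\la\mapsto\cA_0^*-\la$ the kernel dimension is \emph{upper}, not lower, semicontinuous --- it jumps \emph{up} at exceptional points (an isolated eigenvalue of multiplicity $5$ of a self-adjoint operator has trivial kernel at all nearby $\la$ but a five-dimensional kernel at the eigenvalue), so a bound valid at nearby non-real $\la$ transfers only to real points of regular type, not to arbitrary real $\la$. Fallback (i) is too vague to survive the degenerate situations: if, say, $b=c=0$ and $D\equiv\la_0 I$ on a subinterval, then every $y_2\in\bigl(L^2(\itv)\bigr)^n$ supported there, paired with $y_1=0$, lies in $\ker(\cA_0^*-\la_0)$, and no continuation argument across the exceptional set can recover the bound $2$. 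Thus your route establishes the bound for non-real $\la$ (and for real $\la\notin\ov{\Lambda_{\citv}(D)}$) --- which is what the paper actually uses, namely that the deficiency indices $\dim\ker(\cA_0^*\mp\ii)$ are at most $2$ --- but it does not deliver the uniform claim for all $\la\in\CC$ as stated; for the remaining real $\la$ you need either to restrict the assertion or to supply a different argument.
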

\begin{proof}
The proof is similar to the proof of \cite[Proposition~2.3]{ILLT13} in the case $n=1$ where $D$ is a scalar function, and is thus left to the reader.
\end{proof}

For a densely defined closed linear operator $T$, we use the following definition of essential spectrum 
\[
  \sess (T) := \{ \la \in \CC : T-\la \text{ is not Fredholm}\},
\]
which is the set $\sigma_{e3}(T)$ in \cite[Sections~I.3 and IX.1]{EE87}. Note that all definitions of the essential spectrum in \cite[Sections~I.3 and IX.1]{EE87} are equivalent for self-adjoint operators.

Throughout this paper, $\cA$ denotes an arbitrary closed symmetric extension of $\cA_0$.
Since the deficiency indices of $\cA_0$ are finite, $\cA$ is a finite-dimensional extension of $\cA_0$ and hence (see e.g. [7, Section IX.4]) 
\[
  \sess(\cA) = \sess(\ov\cA_0)\subset\RR. 
\]
As in \cite{ILLT13}, we employ Glazman's decomposition principle to determine $\sess(\cA)$ (see \cite{Gla66}).
To this end, for an open subinterval $J\subset \citv$, we denote by $\cA_J$ the closure of the symmetric operator $\cA_{0,J}$ in $L^2(J)\, \oplus \, (L^2(J))^n$ generated by the restriction of $\cA_0$ to $C^2_0(J)\oplus \bigl(C^1_0(J)\bigr)^n$,~i.e.
\begin{equation}\label{clos1}
\cA_J=\ov{\cA_{0,J}}=\ov{\cA_0 \restriction {C^2_0(J)\oplus \bigl(C^1_0(J)\bigr)^n}}. 
\end{equation}
Then, for arbitrary  $t_0\in \itv$, the operator~$\cA$~in $L^2(\itv) \oplus (L^2(\itv))^n$ is a 
finite-dimensional extension of the orthogonal sum $\cA_{(\alpha,t_0)}\oplus\cA_{(t_0,\beta)}$ and hence
(see e.g.\ \cite[Section~IX.5.2]{EE87}) 
\begin{equation}
\label{glaz}
\sess(\cA)=\sess(\cA_{(\alpha,t_0)})\,\cup\,\sess(\cA_{(t_0,\beta)}).
\end{equation}

The first Schur complement $S_0(\la)$ of the operator matrix $\cA_0$ in \eqref{A02} 
which is defined for all $\la\in\CC\setminus \sigma(\overline{D_0})$ and acts in the first space component $L^2(\itv)$ (see \cite[Section 2.2]{Tre08})
is induced by the scalar second order differential expression
\begin{equation*}
\begin{aligned}
 \tau_S(\la) = \tau_A-\la-\tau_B(\tau_D-\la)^{-1}\tau_B^+ = -\dt p \dt + q - \la  - \left(-\dt b^*+c^*\right) (D-\la)^{-1} \left(b\dt+c\right).
\end{aligned}
\end{equation*}      
The differential expression $\tau_S(\la)$ can be rewritten in the standard symmetric form as 
\begin{equation}\label{Ssymm}
\tau_S(\la) = - \pt \pi(\cdot,\la)\pt + \ii\Bigl(r(\cdot,\la)\pt + \pt r(\cdot,\la)\Bigr) +   \tka (\cdot,\la)
\end{equation}
where, for $\la\in\CC\setminus \sigma(\overline{D_0})$,
\begin{equation}\label{pi.r.varkap.def}
\begin{aligned}
  \pi(\cdot,\la) &:= p-b^*(D-\la)^{-1}b, 
  \\[0.5mm] 
  r(\cdot,\la) &
  \defeq \Imag(b^*(D-\la)^{-1}c),
  \\[-0.8mm]
  \tka(\cdot,\la) &:= q - \la - c^*(D-\la)^{-1}c + \frac{\partial}{\partial t}\Real(b^*(D-\la)^{-1}c).
\end{aligned}
\end{equation}
Note that we use partial derivatives here and in the sequel since the coefficients now also depend on the spectral parameter $\la\in\CC$.

\smallskip

For an open subinterval $J\subset\citv$, let $D_{0,J}$ denote the multiplication operator by the matrix function $D$ in $\bigl(L^2(J)\bigr)^n$ with domain $\cD(D_{0,J}) = (C_0(J))^n$. Then, for $\la\in\RR\setminus\sigma(\ov{D_{0,J}})$, the formally symmetric differential expression $\tau_S(\la)$ induces a symmetric operator $S_{0,J}(\la)$ in $L^2(J)$ with domain $C^{2}_0(J)$; we denote the closure of $S_{0,J}(\la)$ by $S_J(\la)$, 
\begin{equation}
\label{clos2}
  \cD(S_{0,J}(\la)):=C^2_0(J), \quad S_{0,J}(\la)u:=\tau_S(\la)u, \qquad
  S_J(\la) := \ov{S_{0,J}(\la)}.
\end{equation}

\smallskip

To describe the resolvent sets of matrix multiplication operators such as $\overline{D_0}$, we introduce the following notation. For a subinterval $J\subset\citv$ and a matrix function $M: \citv \to \CC^{n\times n}$, we have
	\begin{equation}\label{Lam.J.def}
	\Lambda_J(M) 
	\defeq \{ \lambda \in \CC : \exists \, t_0 \in J, \ \det(M(t_0)-\lambda) = 0 \}
	=  \bigcup_{t\in J} \sa(M(t))\,;
	\end{equation}
notice that $\ov{\Lambda_J(M)}= \sigma(M)$ if $M$ is viewed as a matrix multiplication operator acting in $(L^2(J))^n$, see \cite{HW96}.
In the limiting case, we set
\begin{equation}\label{Lam.beta.def}
\Lambda_\beta(M) 
\defeq  \CC \setminus \{ \la \in \CC : \exists \, t_\la \!\in\! \itv \ \exists \, R_\la \!>\! 0 \ \forall \, t \!\in\! \itvla \  \la \!\notin\! \sigma(M(t)) \wedge
 \|(M(t) - \la)^{-1}\| \leq R_\la \};
\end{equation}
since $\Lambda_\beta(M)$ is the complement of the region of boundedness of the matrix family $(M(t))_{t\in[\alpha,\beta)}$, it is closed in $\CC$ 
(see \cite[Theorem~VIII.1.1]{Kato}). If the matrices $M(t)$, $t \in J$, are Hermitian, then the property
\begin{equation}
\|(M(t)-\lambda)^{-1}\| = \frac{1}{\dist(\lambda,\sigma(M(t)))}, \quad t\in J,
\end{equation}
implies that 
\[
\Lambda_\beta(M) = \{ \la \in \RR : \liminf_{t \nearrow \beta} \dist(\la, \sigma(M(t)))=0 \}.
\]

Throughout this paper, 
for the Hermitian matrix-valued function $D=(d_{ij})_{i,j=1}^n: \citv \to \CC^{n\times n}$ in the operator matrix \eqref{A02}, we denote by $\la_k(t) \in \RR$, $k=1,2,\ldots,n$, the eigenvalues of $D(t)$ for $t\in \citv$. Then
\[
\Lambda_J(D) = \bigcup_{t\in J} \, \big\{\la_k(t): k=1,2,\dots,n\big\} \subset \RR;
\]
in Section \ref{sec:str.of.sing.part}, we will assume that the possibly improper limits $\la_{k,\beta} \defeq \lim_{t\nearrow\beta}\la_k(t) \in \RR \cup \{-\infty,\infty\}$ exist, 
see Assumption (T1), in which case we will have
\begin{equation}\label{Lam.beta.lim}
\Lambda_\beta(D) = \{\la_{1,\beta},\dots,\la_{n,\beta}\} \cap \RR.
\end{equation}


\section{Essential spectrum and Schur complement}
\label{sec:3}

The essential spectrum of singular matrix differential operators consists of two parts, 
one due to the matrix structure which persists even when the operator is restricted to compact subintervals $[\alpha,\beta_r]\subset[\alpha,\beta)$ 
and one due to the singularity at $\beta$.
Our main tool to describe both the regular part and the singular part of the essential spectrum of $\cA$ is the first Schur complement $S(\la)$ introduced in Section \ref{sec:A.def}, and in particular its leading coefficient $\pi(\cdot,\la)$, see \eqref{Ssymm}.

\subsection{The regular part of the essential spectrum} 

For points $\la\in\RR\setminus\Lambda_{\citv}(D)$ for which $\pi(t,\la) \!=\! 0$ for some $t\!\in\! \citv$, the leading coefficient of the differential expression $\tau_S(\la)$ vanishes.
Below we show that these points give rise to essential spectrum of a restriction $\cA_{(\alpha,t_0)}$ to some finite interval $(\alpha,t_0)\subset (\alpha,\beta)$ 
with~$t_0<\beta$.

To see this, we first relate the zeros of $\pi(\cdot,\la)$ to the spectrum of the Hermitian matrix-valued function $\Delta\!:\! [\alpha,\beta)\!\to\! \CC^{n\times n}$ given~by
\begin{equation} 
\label{defDelta}
  \Delta(t) \defeq D(t)-\frac1{p(t)}b(t)b(t)^*, \quad t\in \citv, 
\end{equation}
which was used in \cite{ALMS94} to characterize the essential spectrum of operator matrices such as $\cA_{(\alpha,t_0)}$.

\begin{lemma}\label{lem:pidet}
For every $\la\in\RR\setminus\Lambda_{\citv}(D)$,  
\begin{align} \label{pidelta}
\pi(\mydot,\la)=p \frac{\det(\Delta-\la)}{\det(D-\la)} \quad \text{on} \quad \citv.
\end{align}
Moreover, for every open subinterval $J\subset \citv$ and every $\la\in\RR \setminus (\Lambda_J(D) \cup \Lambda_J(\Delta))$,
\begin{align}\label{identity2}
\frac{1}{\pi(\mydot,\la)}b^*\bigl(D-\la\bigr)^{-1}c = \frac{1}{p}b^*\bigl(\Delta-\la\bigr)^{-1}c \quad \text{on } J.
\end{align}
\end{lemma}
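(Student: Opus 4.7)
The plan is to handle the two identities separately, both as pointwise identities at each $t\in[\alpha,\beta)$, using two standard rank-one update formulas from linear algebra applied to the Hermitian matrix $D(t)-\la$ perturbed by the rank-one matrix $\frac{1}{p(t)}b(t)b(t)^*$. Throughout the argument $\la\notin\Lambda_{\citv}(D)$ guarantees that $D(t)-\la$ is invertible for every $t\in\citv$, so $(D(t)-\la)^{-1}b(t)$ and $(D(t)-\la)^{-1}c(t)$ are well defined pointwise.

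For the first identity, I would apply the matrix determinant lemma (Sylvester's determinant identity in the rank-one case): for any invertible $n\times n$ matrix $M$ and vectors $u,v\in\CC^n$,
\[
\det(M - uv^*) = \det(M)\bigl(1 - v^*M^{-1}u\bigr).
\]
Setting $M = D(t)-\la$ and $u = v = \frac{1}{\sqrt{p(t)}}b(t)$ (using that $p(t)>0$ by Assumption~1.1) gives
\[
\det\bigl(\Delta(t)-\la\bigr) = \det\bigl(D(t)-\la\bigr)\cdot\frac{1}{p(t)}\bigl(p(t) - b(t)^*(D(t)-\la)^{-1}b(t)\bigr),
\]
which is exactly $\pi(t,\la)\det(D(t)-\la)/p(t)$. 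Rearranging yields \eqref{pidelta}.

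For the second identity, I would invoke the Sherman--Morrison formula for the inverse of the rank-one perturbation $\Delta - \la = (D-\la) - \frac{1}{p}bb^*$, valid at any point $t\in J$ where both $D(t)-\la$ and $\Delta(t)-\la$ are invertible (which is ensured by $\la\notin\Lambda_J(D)\cup\Lambda_J(\Delta)$, and in particular forces $\pi(t,\la)\neq 0$ by the already established first identity). This gives
\[
(\Delta-\la)^{-1} = (D-\la)^{-1} + \frac{1}{\pi(\mydot,\la)}(D-\la)^{-1}bb^*(D-\la)^{-1}.
\]
Multiplying on the left by $b^*$ and on the right by $c$, then factoring out $b^*(D-\la)^{-1}c$ and using $\pi(\cdot,\la)+b^*(D-\la)^{-1}b = p$, produces
\[
b^*(\Delta-\la)^{-1}c = \frac{p}{\pi(\mydot,\la)}\,b^*(D-\la)^{-1}c,
\]
which is \eqref{identity2} after dividing by $p$.

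No step looks to be a genuine obstacle; the only point requiring care is bookkeeping the set where the inverses and divisions make sense, and verifying that $\la\notin\Lambda_J(\Delta)$ together with \eqref{pidelta} indeed yields $\pi(t,\la)\neq 0$ for all $t\in J$ so that dividing by $\pi(\cdot,\la)$ in \eqref{identity2} is justified.
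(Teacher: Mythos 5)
Your proof is correct and follows essentially the same route as the paper: the determinant identity \eqref{pidelta} via the rank-one case of Sylvester's determinant theorem (the matrix determinant lemma), and \eqref{identity2} via a rank-one update of the inverse — the paper phrases this step through the second resolvent identity for $(\Delta-\la)^{-1}$ and $(D-\la)^{-1}$ rather than quoting Sherman--Morrison explicitly, but the computation is the same, including the observation that $\pi(\cdot,\la)+b^*(D-\la)^{-1}b=p$. Your bookkeeping of where the inverses exist, and the deduction of $\pi(t,\la)\neq 0$ on $J$ from \eqref{pidelta}, is also exactly what is needed.
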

\begin{proof}
Sylvester's determinant theorem states that, for matrices $M_1\in\CC^{k\times m}$ and $M_2\in\CC^{m\times k}$,
\begin{align*}
\det(I_k-M_1M_2)=\det(I_m-M_2M_1);
\end{align*}
here $I_k \in \CC^{k\times k}$ and $I_m \in \CC^{m\times m}$ are the identity matrices. Applying this equality with $k=1$, $m=n$, $M_1=p^{-1}b^*$ and $M_2=(D-\la)^{-1}b$, we can rewrite $\pi(\cdot,\la)$ defined in \eqref{pi.r.varkap.def} as
\begin{align*}
  \pi(\mydot,\la) &= p\,\det\bigl(1-p^{-1}b^*(D-\la)^{-1}b\bigr) 
  = p\,\det\bigl(I_n-(D-\la)^{-1}bp^{-1}b^*\bigr) \\
  &= p\,\det\bigl((D-\la)^{-1}(D-\la-bp^{-1}b^*)\bigr) 
  = p\,\det\bigl((D-\la)^{-1}\bigr)\det\bigl(D-\la-bp^{-1}b^*\bigr) \\
  &= p\frac{\det(\Delta-\la)}{{\det(D-\la)}} \quad \text{on } J. 
\end{align*}

To prove the second claim, let $\la\!\in\!\RR \!\setminus\! (\Lambda_J(D) \cup \Lambda_J(\Delta))$. By the second resolvent identity and \eqref{defDelta}, we~have
\begin{equation*}
b^*(\Delta-\la)^{-1}c = b^*(D-\la)^{-1}c+\frac{1}{p}b^*(D-\la)^{-1}bb^*(\Delta-\la)^{-1}c \quad \text{on } J.
\end{equation*}
Hence, by the definition of $\pi(\cdot,\la)$ in \eqref{pi.r.varkap.def},
\begin{equation*}
\pi(\cdot,\la)\frac1p b^*(\Delta-\la)^{-1}c 
= b^*(\Delta-\la)^{-1}c-\frac{1}{p}b^*(D-\la)^{-1}bb^*(\Delta-\la)^{-1}c
= b^*(D-\la)^{-1}c\quad \text{on } J . 
\qedhere
\end{equation*}
\end{proof}
\begin{remark}\label{schurdef}
	If Assumption \ref{ass:regularity} is satisfied, then, for every open subinterval $J \subset \citv$ and every $\la\in \RR \setminus (\Lambda_J(D) \cup \Lambda_J(\Delta))$, the differential expression
	$\tau_S(\la)$ satisfies the conditions \cite[III.(10.3)]{EE87}, i.e.  
	\begin{enumerate}[label={\upshape(\roman{*})}]
	\item	$  \pi(\cdot,\la) \ne 0, \quad \dfrac 1{\pi(\cdot,\la)} \in L^1_{\rm loc}(J),$ \vspace{2mm}
	\item   $  \dfrac {2r(\cdot,\la)}{\pi(\cdot,\la)} = \dfrac{2\Im (b^*\,(\Delta-\la)^{-1} c)}{p} \in L^1_{\rm loc}(J), \quad \kappa(\cdot,\la) := \varkappa(\cdot,\la) + \ii \pt r(t,\la) \in L^1_{\rm loc}(J)$.
	\end{enumerate}
Here the identity in {\rm (ii)} follows from \eqref{identity2}. Since, in addition, $r(\cdot,\la) \in {\rm AC_{loc}}(J)$, the symmetric operator $S_{0,J}(\la)$ for $\la\in \RR \setminus (\Lambda_J(D) \cup \Lambda_J(\Delta))$ also satisfies the conditions of \cite[Theorem III.10.7]{EE87} and hence the deficiency numbers of $S_{0,J}(\la)$ are~$\le 2$.
\end{remark}

On compact intervals, the essential spectrum of top-dominant matrix differential operators was characterized in \cite{ALMS94}. 
As a consequence of this and Glazman's decomposition principle \eqref{glaz},
the closure $\ov{\Lambda_{J}(\Delta)}$ of the range of eigenvalues of the matrix function $\Delta$ on any open subinterval $J \subset \itv$ belongs to the essential spectrum of $\cA_J$.  This part of the essential spectrum of 
$\cA_J$  is called \textit{the regular part} and denoted by $\sessreg(\cA_J)$.
\begin{proposition} 
\label{prop:regpart}
Let Assumption \ref{ass:regularity} be satisfied and let $J$ be an open interval such that $\ov J \subset \citv$. If $\cA_J$ is the closed symmetric operator defined in \eqref{clos1}, then
\begin{equation*}
\sess(\cA_J) = \sessreg(\cA_J) = \ov{\Lambda_{J}(\Delta)} = \Lambda_{\ov J}(\Delta);
\end{equation*}
if $J=\itv$ and $\cA$ is any closed symmetric extension of the operator $\cA_0$ defined in \eqref{A02}, then
\begin{equation*}
\sess(\cA) \supset \sessreg(\cA) = \ov{\Lambda_{\citv}(\Delta)};
\end{equation*}
in particular, $\ov{\Lambda_{\citv}(\Delta) }=\RR$ implies that $\sess(\cA)=\RR$.
\end{proposition}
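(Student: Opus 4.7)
The plan is to reduce the regular-interval claim to the essential-spectrum characterisation for top-dominant matrix differential operators on compact intervals provided in \cite{ALMS94}, and then to transport the result to $J=\itv$ via Glazman's decomposition principle \eqref{glaz}. For the first claim, since $J$ is open with $\ov J\subset\citv$, the closure $\ov J$ is compact in $\RR$ and Assumption~\ref{ass:regularity} ensures that the coefficients of $\cA_{0,J}$ are continuous (hence bounded) on $\ov J$ with $p>0$. The operator $\cA_J$ then falls inside the framework of \cite{ALMS94}, whose abstract theorem describes $\sess(\cA_J)$ as the range of eigenvalues of the effective (Schur) symbol. By Lemma~\ref{lem:pidet} this effective symbol is precisely the Hermitian matrix-valued function $\Delta=D-p^{-1}bb^*$, which gives
\[
  \sess(\cA_J)=\Lambda_{\ov J}(\Delta).
\]
Since $\Delta$ is Hermitian and continuous on the compact set $\ov J$, its eigenvalue branches can be chosen continuously, so the set $\Lambda_{\ov J}(\Delta)$ is closed and equals $\ov{\Lambda_J(\Delta)}$. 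No singularity is present on a regular interval, so $\sess(\cA_J)=\sessreg(\cA_J)$ by definition.

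For the global statement with $J=\itv$, I would apply Glazman's principle \eqref{glaz} with arbitrary $t_0\in\itv$ and combine it with the first claim applied to the regular subinterval $(\alpha,t_0)$ to obtain
\[
  \sess(\cA)\ \supset\ \sess(\cA_{(\alpha,t_0)})\ =\ \Lambda_{[\alpha,t_0]}(\Delta).
\]
Taking the union over $t_0\in\itv$ and using that $\sess(\cA)$ is closed yields
\[
  \sess(\cA)\ \supset\ \ov{\bigcup_{t_0\in\itv}\Lambda_{[\alpha,t_0]}(\Delta)}\ =\ \ov{\Lambda_{\itv}(\Delta)},
\]
and by the definition of the regular part as the closure of the essential spectra of restrictions to regular subintervals, the right-hand side equals $\sessreg(\cA)$. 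The final implication $\ov{\Lambda_{\itv}(\Delta)}=\RR\Rightarrow\sess(\cA)=\RR$ is immediate from the inclusion $\sess(\cA)\subset\RR$ recorded after \eqref{glaz}.

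The main obstacle is the correct invocation of \cite{ALMS94}: one must verify self-adjointness (or at least essential self-adjointness of the minimal realisation) of $\cA_J$ on the compact regular interval, check that the hypotheses permitting the Schur-type reduction to the scalar leading problem are satisfied under Assumption~\ref{ass:regularity}, and confirm that the cited characterisation yields exactly $\Lambda_{\ov J}(\Delta)$ rather than a larger set. The key link is Lemma~\ref{lem:pidet}, which identifies the zero set in $\la$ of the Schur leading coefficient $\pi(\cdot,\la)$ with the eigenvalue graph of $\Delta$; once this identification is in place, the topological equality $\Lambda_{\ov J}(\Delta)=\ov{\Lambda_J(\Delta)}$ is a straightforward consequence of continuity of the Hermitian eigenvalue branches, and the Glazman step for $J=\itv$ becomes routine.
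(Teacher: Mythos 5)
Your proposal is correct and follows essentially the same route as the paper, which likewise reduces the compact-interval case to the characterisation of the essential spectrum of top-dominant operator matrices in \cite[Theorem 4.5]{ALMS94} and then uses Glazman's decomposition principle \eqref{glaz} together with closedness of $\sess(\cA)$ for the global statement. The identification of the effective symbol with $\Delta$ via Lemma \ref{lem:pidet} and the continuity of the Hermitian eigenvalue branches are exactly the ingredients the paper relies on (by reference to the analogous argument in the scalar case).
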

\begin{proof}
The reasoning is completely analogous to the proof of \cite[Proposition 3.3]{ILLT13} in the case $n=1$; it uses Glazman's decomposition principle \eqref{glaz} and \cite[Theorem 4.5]{ALMS94}.
\end{proof}

\subsection{The singular part of the essential spectrum}

While for intervals $J$ such that $\ov J \subset \citv$ the essential spectrum of $\cA_J$ is exhausted by its regular part, the singular endpoint $\beta$ may give rise to an additional part of $\sess(\cA)$. This part of the essential spectrum of $\cA$ is referred to as \emph{the singular part} and denoted by $\sesssing(\cA)$ (not to be confused with the singular continuous spectrum).

In the following, we characterize the essential spectrum of the restrictions $\cA_{(t_0,\beta)}$ for suitably large $t_0\in(\alpha,\beta)$ in terms of the Schur complement. The following results generalize those in \cite{ILLT13} not only to the more general case $n\geq 1$. More importantly, we develop a different proof that allows us to weaken the assumptions on the coefficient functions in \eqref{pi.r.varkap.def} considerably (cf.\ \cite[Assumption~(B)]{ILLT13}).

The new Assumption \ref{ass:Schur} below contains a weight function $\eta$, which enables us to cover a larger set of operators; particular weights have already been used in the method of \cite{KLN08}. The choice of a suitable $\eta$ depends on the behaviour of other coefficients in the Schur complement (see the proof of Theorem \ref{prop:suff} for examples).
\begin{lemma}\label{lem:tlam.1}
Let Assumption \ref{ass:regularity} be satisfied. 
Then, for every $\la \in \RR \setminus ( \sessreg(\cA) \cup \Lambda_\beta(D))$, there exists $\widehat t_\la \in \itv$ such that 
\begin{enumerate}[label=\rm{(\roman{*})}]
\item\label{lam.D} $\lambda \notin \Lambda_{\citvlatil}(D)$ and $\sup_{t \in \citvlatil}\|(D(t)-\lambda)^{-1}\| < \infty$;
\item $\pi(\mydot,\lambda) \neq 0$ on $\citvlatil$, hence either $\pi(\mydot,\lambda) >0$ or $\pi(\mydot,\lambda) <0$ on $\citvlatil$.
\end{enumerate}
\end{lemma}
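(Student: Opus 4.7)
The plan is to prove the two claims separately, noting that claim (ii) will rely on (i) already having been established. For (i), I would unwind the definition \eqref{Lam.beta.def} of $\Lambda_\beta(D)$: the hypothesis $\lambda\in\RR\setminus\Lambda_\beta(D)$ means precisely that there exist $t_\la\in\itv$ and $R_\la>0$ such that $\la\notin\sigma(D(t))$ and $\|(D(t)-\la)^{-1}\|\le R_\la$ for all $t\in\itvla$. Any choice of $\widehat t_\la\in [t_\la,\beta)$ then delivers (i).

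For (ii), the key input is the identity from Lemma~\ref{lem:pidet}, which is applicable on $\citvlatil$ thanks to~(i). Since $\la\notin\sessreg(\cA)$, Proposition~\ref{prop:regpart} gives $\la\notin\ov{\Lambda_{\citv}(\Delta)}$, and in particular $\la\notin\Lambda_{\citv}(\Delta)$, so $\det(\Delta(t)-\la)\neq 0$ for every $t\in\citv$. Combined with $\det(D(t)-\la)\neq 0$ on $\citvlatil$ from~(i) and the positivity of $p$, the identity
\[
\pi(t,\la)=p(t)\,\frac{\det(\Delta(t)-\la)}{\det(D(t)-\la)}
\]
from \eqref{pidelta} forces $\pi(\cdot,\la)\neq 0$ throughout $\citvlatil$.

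It remains to upgrade the nonvanishing to a definite sign. Here I would observe that $\pi(\cdot,\la)$ is real-valued on $\citvlatil$: $p$ is real, and since $D(t)$ is Hermitian and $\la\in\RR$, the matrix $(D(t)-\la)^{-1}$ is Hermitian, whence the scalar $b(t)^*(D(t)-\la)^{-1}b(t)$ is real. Continuity of $\pi(\cdot,\la)$ on $\citvlatil$ follows from Assumption~\ref{ass:regularity} together with the continuity of $t\mapsto (D(t)-\la)^{-1}$ on the set where the resolvent exists. Since $\citvlatil$ is connected and the continuous real-valued function $\pi(\cdot,\la)$ has no zero there, the intermediate value theorem yields that either $\pi(\cdot,\la)>0$ or $\pi(\cdot,\la)<0$ throughout $\citvlatil$.

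There is no genuine obstacle here: the argument is essentially a bookkeeping exercise combining Lemma~\ref{lem:pidet}, Proposition~\ref{prop:regpart}, and the definition of $\Lambda_\beta(D)$. The only step requiring any care is making sure that (i) is established first, so that both the determinant formula \eqref{pidelta} and the continuity of $\pi(\cdot,\la)$ are available on the final interval $\citvlatil$ used for (ii).
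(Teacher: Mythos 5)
Your argument is correct and follows essentially the same route as the paper: (i) is read off from the definition \eqref{Lam.beta.def}, and (ii) combines the identity \eqref{pidelta} with $p>0$, $\la\notin\ov{\Lambda_{\citv}(\Delta)}$, and continuity of the real-valued function $\pi(\cdot,\la)$ to exclude a sign change. The only point of care is that the definition of $\Lambda_\beta(D)$ only guarantees the resolvent bound on the open interval $(t_\la,\beta)$, so $\widehat t_\la$ should be chosen strictly larger than $t_\la$ rather than allowing $\widehat t_\la=t_\la$.
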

\begin{proof}
The existence of $\widehat t_\la$ for which \ref{lam.D} holds is immediate from the definition of $\Lambda_{\beta}(D)$, see \eqref{Lam.beta.def}. The function $\pi(\cdot,\lambda)$ is non-zero on $\citvlatil$ since $p>0$ on $\citv$, $\la \in \RR \setminus ( \ov{\Lambda_{\citv}(\Delta)} \cup \Lambda_\beta(D))$ and the relation \eqref{pidelta} holds. Because $\pi(\cdot,\lambda)$ is continuous on $\citvlatil$, it cannot change sign on $\citvlatil$. 
\end{proof}

\begin{ass}\label{ass:Schur}
For every $\la\!\in\! \RR \!\setminus\! \big( \sessreg(\cA) \cup \Lambda_\beta(D) \big)$, there is a $t_\la \!\in\! \citvlatil$ with $\widehat t_\la$ as in
Lemma~\ref{lem:tlam.1} such~that
\begin{enumerate}[label={{\upshape(B\arabic{*})}}]
\item \label{ass:Schur.1}
there exists a constant $k_1 >0$ with
\begin{equation}\label{pi.B1a}
|\pi(\mydot,\la)| >  k_1\bigl\|(D-\la)^{-1}b\bigr\|^2_{\CC^n} \quad \text{on } \citvla;
\end{equation}
\item \label{ass:Schur.2}
there exists a constant $k_2 >0$ and a positive-valued function $\eta\in C^2(\citvla)$ with
\begin{equation}\label{Veta.below}
\hspace*{7mm}
V_{\eta}(\cdot,\la)  -k_2\bigg\|(D-\la)^{-1}\Bigl[\Bigl(\ii\frac{r(\cdot,\la)}{\tpi(\cdot,\la)}+\frac{\eta'}{2\eta}\Bigr)b+c\Bigr]\bigg\|^2_{\CC^n} \text{bounded from below on } \citvla
\end{equation}
where 
\begin{equation}
\label{V_eta}
V_{\eta}(\cdot,\la):=
s_\pi \left(
\tka(\cdot,\la)-\frac{r(\cdot,\la)^2}{\tpi(\cdot,\la)} - \frac{1}{2\sqrt{\eta}}\Bigl(\tpi(\cdot,\la)\frac{\eta'}{\sqrt{\eta}}\Bigr)'
\right)
\end{equation}
and 
\begin{equation}\label{spi.def}
s_{\pi}:= \sgn(\pi(\cdot,\la)\!\!\restriction\!_{\citvla}) =
\begin{cases}
\hspace{3mm} 1 & \text{if } \pi(\cdot,\la) > 0 \text{ on } \citvla, 
\\
 -1 & \text{if } \pi(\cdot,\la) < 0 \text{ on } \citvla. 
\end{cases}
\end{equation}
\end{enumerate}
\end{ass}

\begin{lemma} \label{lem:AS1}
Let Assumptions \ref{ass:regularity}, \ref{ass:Schur} be satisfied and suppose $\sessreg(\cA) \ne \RR$. Let $\la \in \RR \setminus (\sessreg(\cA) \cup \Lambda_\beta(D))$ and let $t_\la \in \citv$ be as 
in Assumption \ref{ass:Schur}. Further, let $S_{\itvla}(\la)$ be defined as in \eqref{clos2} with $J \defeq \itvla$. 
Then 
\begin{align}
\label{Aequivschur}
  \lambda\in\sess(\cA_{\itvla }) \ &\iff \
  0\in\sess(S_{\itvla}(\la)); \\
\intertext{moreover,} 
\label{A.ess.dec}
\lambda\in\sess(\cA) \ &\iff \
0\in\sess(S_{\itvla}(\la)).
\end{align} 
\end{lemma}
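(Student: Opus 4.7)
\emph{Plan.} The statement comprises two equivalences, \eqref{Aequivschur} for $\cA_{\itvla}$ and \eqref{A.ess.dec} for $\cA$; I would derive \eqref{A.ess.dec} from \eqref{Aequivschur} via Glazman's decomposition, and prove \eqref{Aequivschur} through a singular-sequence argument based on a formal Frobenius--Schur factorization of $\cA_{\itvla}-\la$.

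\emph{Reduction to the singular end.} Apply \eqref{glaz} with splitting point $t_0 := t_\la \in \itv$ to obtain $\sess(\cA) = \sess(\cA_{(\alpha,t_\la)}) \cup \sess(\cA_{\itvla})$. By Proposition~\ref{prop:regpart},
\[
  \sess(\cA_{(\alpha,t_\la)}) = \Lambda_{[\alpha,t_\la]}(\Delta) \subset \ov{\Lambda_{\citv}(\Delta)} = \sessreg(\cA),
\]
and the hypothesis $\la\notin\sessreg(\cA)$ shows that $\la$ contributes to $\sess(\cA)$ only through $\sess(\cA_{\itvla})$. Hence \eqref{A.ess.dec} follows from \eqref{Aequivschur}.

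\emph{Schur reduction on $\itvla$.} Lemma~\ref{lem:tlam.1}\ref{lam.D} implies that $\ov{D_{0,\itvla}}-\la$ is boundedly invertible, so the formal factorization
\[
 \cA_{\itvla} - \la =
 \begin{pmatrix} I & \tau_B(\tau_D-\la)^{-1}\\ 0 & I\end{pmatrix}
 \begin{pmatrix} \tau_S(\la) & 0 \\ 0 & \tau_D - \la \end{pmatrix}
 \begin{pmatrix} I & 0 \\ (\tau_D-\la)^{-1}\tau_B^+ & I \end{pmatrix}
\]
suggests that Fredholmness of $\cA_{\itvla}-\la$ is equivalent to Fredholmness of $S_{\itvla}(\la)$. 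Instead of turning this into an operator identity (which is delicate since the off-diagonal factors contain the derivative), I would use singular sequences. Given $\{u_n\}\subset\cD(S_{\itvla}(\la))$ with $\|u_n\|=1$, $u_n\rightharpoonup 0$, $S_{\itvla}(\la)u_n\to 0$, set $v_n := -(D-\la)^{-1}(bu_n'+cu_n)$; the second row of $(\cA_{\itvla}-\la)\binom{u_n}{v_n}$ vanishes by construction, the first row equals $S_{\itvla}(\la)u_n \to 0$, and $\|v_n\|$ stays bounded by means of Assumption~\ref{ass:Schur}\ref{ass:Schur.1}. Conversely, given a singular sequence $\binom{u_n}{v_n}$ for $\cA_{\itvla}-\la$, the second row of $(\cA_{\itvla}-\la)\binom{u_n}{v_n}\to 0$ yields
\[
  v_n = -(D-\la)^{-1}(bu_n'+cu_n) + o(1) \quad \text{in } (L^2(\itvla))^n,
\]
and substitution into the first row produces $S_{\itvla}(\la)u_n\to 0$ in $L^2(\itvla)$.

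\emph{Main obstacle.} The delicate point in the reverse direction is to show that $\|u_n\|$ cannot tend to $0$, i.e., that the $L^2$-mass of a singular sequence for $\cA_{\itvla}-\la$ does not concentrate entirely in the vector component $v_n$. This is precisely the purpose of Assumption~\ref{ass:Schur}: the pointwise bound \eqref{pi.B1a} from \ref{ass:Schur.1} dominates $\|(D-\la)^{-1}b\|^2$ by $|\tpi(\cdot,\la)|$, and \ref{ass:Schur.2} --- after the Liouville-type transformation $u\mapsto\sqrt{\eta}\,u$ absorbing the first-order term of $\tau_S(\la)$ --- recasts the Schur complement as a Schr\"odinger-like operator with potential $V_\eta$ bounded from below up to the correction controlled by \ref{ass:Schur.1}. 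Together these provide the coercive estimate that forces the $L^2$-mass of a singular sequence to reside in the scalar $u$-component, yielding after renormalization a singular sequence for $S_{\itvla}(\la)$ at $0$ and closing the equivalence \eqref{Aequivschur}.
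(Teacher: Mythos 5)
Your Glazman reduction of \eqref{A.ess.dec} to \eqref{Aequivschur} and your treatment of the direction $0\in\sess(S_{\itvla}(\la))\Rightarrow\la\in\sess(\cA_{\itvla})$ (build $v_n:=-(D-\la)^{-1}(bu_n'+cu_n)$ from a singular sequence $u_n$ for $S_{\itvla}(\la)$) coincide with the paper's argument; note, though, that bounding $\|v_n\|$ requires \ref{ass:Schur.2} as well as \ref{ass:Schur.1}, since $\|(D-\la)^{-1}c\,u_n\|$ is not controlled by \eqref{pi.B1a} alone. For the converse direction, however, your singular-sequence route has a genuine gap. Writing $g_n$ for the second component of $(\cA_{\itvla}-\la)\binom{u_n}{v_n}$, your substitution gives exactly
\begin{equation*}
S_{0,J}(\la)u_n \;=\; f_n \;-\; B_0(D_0-\la)^{-1}g_n ,
\end{equation*}
and $B_0(D_0-\la)^{-1}=\bigl(-\tfrac{\d}{\d t}b^*+c^*\bigr)(D-\la)^{-1}$ is an unbounded first-order operator: $g_n\to 0$ in $(L^2)^n$ does \emph{not} imply $B_0(D_0-\la)^{-1}g_n\to 0$ in $L^2$, so the claimed convergence $S_{\itvla}(\la)u_n\to 0$ does not follow. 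This is precisely the difficulty you flag as ``delicate'' and then do not resolve. Likewise, the assertion that the $L^2$-mass of a singular sequence cannot concentrate in the vector component is stated as a consequence of a ``coercive estimate'' that is never produced. A further, fixable, point: the Weyl singular-sequence criterion is formulated for self-adjoint operators, whereas $\cA_{\itvla}$ and $S_{\itvla}(\la)$ are only closed symmetric; one must first pass to self-adjoint (e.g.\ Friedrichs) extensions, using that finite-dimensional extensions do not change $\sess$.

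The paper avoids both obstacles by arguing in the opposite logical direction: assuming $0\notin\sess(S_{\itvla}(\la))$, it gauges away the first-order term, forms the Friedrichs extension $T_F(\la)$ of the resulting semibounded operator \eqref{T_0}, subtracts a finite-rank projection $K$, and exhibits $(\cA_{0,J}-K-\la)^{-1}$ explicitly as a $2\times 2$ block operator whose off-diagonal entries involve $F(\la)=L\,|T_F(\la)+2s_\pi\delta|^{-1/2}$ and $F(\la)^*$. The unboundedness of $B_0(D_0-\la)^{-1}$ and $(D_0-\la)^{-1}C_0$ is absorbed by $|T_F(\la)|^{-1/2}$, and the boundedness of $F(\la)$ is exactly where the lower bound \eqref{Veta.below} of Assumption \ref{ass:Schur.2} enters, via the second representation theorem in estimate \eqref{lastestimate}. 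Any rigorous version of your Weyl-sequence argument would have to reprove this relative form-boundedness of $L$ with respect to $T_F(\la)^{1/2}$, so the proposal does not in fact shortcut the technical core of the proof.
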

\begin{proof}
	We fix 
	$
	\la \in \RR \setminus (\sessreg(\cA) \cup \Lambda_\beta(D))
	$
	and abbreviate $J \defeq \itvla$, $\mathcal{H}_1 := L^2(J)$, and $\mathcal{H}_2 \!:=\! \bigl(L^2(J)\bigr)^n$. 
	We equip $\mathcal{H}_2=\mathcal{H}_1^n$ with the norm
	\[
	\|f\|_{\mathcal{H}_2} := \Bigl(\|f_1\|^2_{\mathcal{H}_1}+\|f_2\|^2_{\mathcal{H}_1}+\ldots+\|f_n\|^2_{\mathcal{H}_1}\Bigr)^{\frac{1}{2}}, 
	\quad f=(f_1,f_2,\ldots,f_n)^{\mathrm{t}} \in \mathcal{H}_2 = \mathcal{H}_1^n.
	\]
	By definitions \eqref{clos1} and \eqref{clos2}, we have $\cA_{J}=\ov{\cA_{0,J}}$ and $S_J(\la) = \ov{S_{0,J}(\lambda)}$. Thus we have to prove that 
	\begin{equation} 
	\label{hot}
	\lambda\in\sess(\ov{\cA_{0,J}}) \quad\Longleftrightarrow\quad
	0\in\sess\bigl(\ov{S_{0,J}(\lambda)}\bigr).
	\end{equation}
	Note that, by Lemma \ref{lem:tlam.1}, $D(t)-\lambda$ is invertible for $t\!\in\citvla$ and 
	$\sup_{t \in \citvla}\|(D(t)-\lambda)^{-1}\| \!<\! \infty$. Moreover, since $D\in C^1([\alpha,\beta),\CC^{n\times n})$ by Assumption \ref{ass:regularity}, 
	we have $(D-\la)^{-1} C^1_0([\alpha,\beta),\CC^n) \subset C^1_0([\alpha,\beta),\CC^n)$.

	``$\Longleftarrow$" in \eqref{hot}: The proof of this implication is completely analogous to the  corresponding part of the proof of \cite[Lemma 3.7]{ILLT13}.
	
	``$\Longrightarrow$" in \eqref{hot}: In order to delete the first order derivative in $S_{0,J}$, we apply
	the unitary transformation 
	\begin{equation}\label{omega}
	U:L^2({J}) \to L^2(J), \quad (Uf)(t)=\e^{-\ii\omega(t,\la)}f(t), 
	\quad \text{with} \quad
	\omega(t,\la) :=\int^t_{t_\la}\frac{r(s,\la)}{\pi(s,\la)}\text{d}s;
	\end{equation}
	note that $\frac{r(\cdot,\la)}{\tpi(\cdot,\la)}\in C^{1}(J)$.
	Then the transformed operator is again symmetric and has the form
	\begin{align*}
	& T_0(\la) 
	= 
	US_{0,J}(\la)U^{-1} 
	= 
	-\pt\tpi(\cdot,\la)\pt + \tka(\cdot,\la)-\frac{r(\cdot,\la)^2}{\tpi(\cdot,\la)},
	\qquad 
	\mathcal{D}(T_0(\la)) = U\mathcal{D}(S_{0,J}(\la)).
		\end{align*}
	It is not difficult to check that, with $V_\eta$ defined as in \eqref{V_eta},
	\begin{equation}\label{T_0}
	T_0(\la) = s_\pi \left(
	-\frac{1}{\sqrt{\eta}}\pt \eta|\pi(\cdot,\la)| \pt\frac{1}{\sqrt{\eta}} + V_{\eta}(\cdot,\la)
	\right).
	\end{equation}
	Since $V_\eta$ is bounded from below by \eqref{Veta.below} in Assumption \ref{ass:Schur}, there is a $\delta\geq 0$ such that $V_{\eta}(\cdot,\la)+\delta\geq 0$ on $\citvla$ and, for $f\in\mathcal{D}(T_0(\la))$, 
	\begin{equation*}
	\begin{aligned}
	\big( (T_0(\la) + 2 s_\pi \delta)f,f \big)_{\mathcal{H}_1} &= s_\pi 
	\left(
	-\Big( \pt \Big( \eta |\tpi(\cdot,\la)| \pt \Big( \frac{1}{\sqrt{\eta}} f \Big) \Big), \frac{1}{\sqrt{\eta}} f \Big)_{\mathcal{H}_1} + \big( V_{\eta} f, f \big)_{\mathcal{H}_1} + 2 \delta \|f\|_{\mathcal{H}_1}^2 
	\right)
	\\
	& = s_\pi \left( 
	\Big\|\sqrt{\eta|\tpi(\cdot,\la)|}\pt  \Big( \frac{1}{\sqrt{\eta}} f \Big) \Big\|_{\mathcal{H}_1}^2 + \|\sqrt{V_{\eta}+\delta}f\|_{\mathcal{H}_1}^2 + \delta\|f\|_{\mathcal{H}_1}^2
	\right).
	\end{aligned}
	\end{equation*}
	Thus $T_0(\la) +2 s_\pi \delta$ is uniformly positive if $s_\pi=1$ and uniformly negative if $s_\pi\!=\!-1$ with ${s_\pi \!=\! \sgn (\pi(\cdot,\la)\!\!\restriction_{\citvla})}$. Hence the quadratic form generated by $T_0(\la)$ is closable. We denote its closure by $t_F(\la)$
and by $T_F(\la)$ the self-adjoint operator associated with $t_F$, i.e.\ the Friedrichs extension of $T_0(\la)$ (cf.\ \cite[VI.\S~2]{Kato}).
Note that the domain of $t_F(\la)$ is the closure of $\mathcal{D}(T_0(\la))$ with respect to the norm
	\[
	\Bigl( \Big\|\sqrt{\eta|\tpi(\cdot,\la)|}\pt \Big(  \frac{1}{\sqrt{\eta}} \cdot \Big) \Big\|_{\mathcal{H}_1}^2+\|\sqrt{V_{\eta}+\delta}\cdot\|_{\mathcal{H}_1}^2+\|\cdot\|_{\mathcal{H}_1}^2 \Bigr)^{\frac{1}{2}}.
	\]
	
Now we suppose that $0\notin\sess\bigl(\ov{S_{0,J}(\lambda)}\bigr)$. Since $U$ is unitary and $S_{0,J}$ has finite deficiency indices, we have $0\notin\sess\bigl(T_F(\lambda)\bigr)$. Let $P_0$ be the spectral projection onto the eigenspace of $T_F(\la)$ corresponding to $0$ which is $\{0\}$ if $0$ is not an eigenvalue of $T_F(\la)$. Since $0$ is not an eigenvalue of infinite multiplicity, $P_0$ is of finite rank and thus compact. Hence $0\notin\sess\bigl(T_F(\lambda)-P_0\bigr)$;
note that $0\notin\sigma_{\text{p}}\bigl(T_F(\lambda)-P_0\bigr)$ by definition of $P_0$. Then
 the operator
	\[
	K=\begin{pmatrix}
	K_0 & 0_{1\times n} \\[2.5ex]
	0_{n\times 1} & 0_{n\times n}
	\end{pmatrix}, \quad K_0:=U^{-1}P_0U,
	\]
	is compact. It is not difficult to check that $\la\in\sigma_{\text{p}}(\cA_{0,J}-K)$ implies that
	$0\in\sigma_{\text{p}}\bigl(S_{0,J}(\lambda)-K_0\bigr)\subset\sigma_{\text{p}}\bigl(T_F(\lambda)-P_0\bigr)$, a contradiction to the choice of $P_0$. Hence $\la\notin\sigma_{\text{p}}(\cA_{0,J}-K)$. 
	
	The claim is proved if we show that $(\cA_{0,J}-K-\la)^{-1}$ is bounded on $\ran(\cA_{0,J}-K-\la)$. In fact, we will show that the latter implies that $\la\notin\sigma_{\text{p}}(\cA_J-K)$, and thus,  by \cite[Lemma 2.4]{ILLT13}, $\la\notin\sess(\cA_J-K) = \sess(\cA_J)$.
	
	To see why the boundedness of $(\cA_{0,J}-K-\la)^{-1}$ on $\ran(\cA_{0,J}-K-\la)$ implies $\la\notin\sigma_{\text{p}}(\cA_{J}-K)$, 
suppose to the contrary that $\cA_{J}-K-\la$ is not injective. Then there exists an $x \in \cD(\cA_{J})$,  $\|x\|_{\cH_1\oplus \cH_2}=1$, such that $(\cA_{J}-K-\la)x=0$. Since $\cA_J=\ov{\cA_{0,J}}$, there exists a sequence $(x_n)_{n\in\N} \subset \cD(\cA_{0,J})$ with $x_n \to x$ and $(\cA_{0,J}-K-\la)x_n\to 0$, $n \to \infty$. Letting $y_n:=(\cA_{0,J}-K-\la)x_n\in \text{ran} (\cA_{0,J}-K-\la)$, we obtain
	\[
	\frac{\|(\cA_{0,J}-K-\la)^{-1}y_n\|_{\cH_1\oplus \cH_2}}{\|y_n\|_{\cH_1\oplus \cH_2}} 
	= \frac{\|x_n\|_{\cH_1\oplus \cH_2}}{\|(\cA_{0,J}-K-\la)x_n\|_{\cH_1\oplus \cH_2}} \to \infty,  \quad n \to \infty,
	\]
a contradiction to the boundedness of $(\cA_{0,J}-K-\la)^{-1}$ on $\ran(\cA_{0,J}-K-\la)$.
	
	It remains to be shown that $(\cA_{0,J}-K-\la)^{-1}$ is bounded on $\ran(\cA_{0,J}-K-\la)$.
	To this end, let $(f,g)^{\rm t} \!\in\! \ran(\cA_{0,J}-K-\la)$,
	\begin{align}
	\label{nov18}
	(A_0-K_0-\lambda)u+B_0 v &= f, \\
	\label{nov18-ii}
	C_0 u+(D_0-\lambda)v &= g,
	\end{align}
	with $(u,v)^{\rm t} \in \cD(\cA_{0,J}) = C^2_0(J) \oplus \bigl(C^1_0(J)\bigr)^n$. Then Assumption \ref{ass:regularity} implies that 
	$(D_0-\la)^{-1}C_0u \in \bigl(C^1_0(J)\bigr)^n$, and \eqref{nov18-ii} shows that $(D_0-\la)^{-1}g=(D_0-\la)^{-1}C_0u+v\in \bigl(C^1_0(J)\bigr)^n$.
	Solving \eqref{nov18-ii} for $v$, we can thus substitute $v=-(D_0-\lambda)^{-1}C_0u+(D_0-\lambda)^{-1}g \in \bigl(C^1_0(J)\bigr)^n$ into  \eqref{nov18} to obtain
	\[
	(A_0-K_0-\lambda)u-B_0(D_0-\lambda)^{-1}C_0u=f-B_0(D_0-\lambda)^{-1}g.
	\]
	Since the left-hand side equals $\bigl(S_{0,J}(\la)-K_0\bigr)u = U^{-1}\bigl(T_F(\la)-P_0\bigr)Uu$ and $U^{-1}\bigl(T_F(\la)-P_0)U$ is boundedly invertible, it follows that
	\[
	u=U^{-1}\bigl(T_F(\la)-P_0\bigr)^{-1}Uf-U^{-1}\bigl(T_F(\la)-P_0\bigr)^{-1}UB_0(D_0-\lambda)^{-1}g.
	\]
	Inserting this back into the above formula for $v$, we find that
	\[
	v=-(D_0-\lambda)^{-1}C_0U^{-1}\bigl(T_F(\la)-P_0\bigr)^{-1}Uf+(D_0-\lambda)^{-1}g
	+(D_0-\lambda)^{-1}C_0U^{-1}\bigl(T_F(\la)-P_0\bigr)^{-1}U\!B_0(D_0-\lambda)^{-1}g.
	\]
	
Now define the auxiliary operator 
	\begin{align*}
	L \defeq& \,l_1(\cdot,\la) \sqrt{\eta} \,\pt  \frac{1}{\sqrt{\eta}} +l_2(\cdot,\la),\\
	\mathcal{D}(L) \defeq& \,\Bigl\{f\in L^2(J):\,\Bigl( \frac{1}{\sqrt{\eta}}  f \Bigr)'\!\in\! L^1_{\text{loc}}(J), \ \sqrt{\eta\tpi}\Bigl(\frac{1}{\sqrt{\eta}}f\Bigr)'\!\in
	\! L^2(J), \ \sqrt{V_{\eta}+\delta}f\in L^2(J)\Big\} 
	\end{align*} 
	where 
	\begin{align*}
	l_1(\cdot,\la) = \e^{\ii\omega(\cdot,\la)}(D-\la)^{-1}b, \quad l_2(\cdot,\la) = \e^{\ii\omega(\cdot,\la)}(D-\la)^{-1}\Bigl[\Bigl(\ii\,\omega'(\cdot,\la)+\frac{\eta'}{2\eta}\Bigr)b+c\Bigr],
	\end{align*}
	and $\omega(\cdot,\la)$ is defined as in \eqref{omega}. Further, we introduce the operators
	\begin{align*} 
	F(\lambda) \defeq L\, |T_F(\la) + 2 s_\pi \delta|^{-\frac 12}, \quad G(\lambda) &:=|T_F(\la)+2 s_\pi \delta|^{\frac{1}{2}} \bigl(T_F(\la)+P_0\bigr)^{-1} 
	|T_F(\la) + 2 s_\pi \delta|^{\frac{1}{2}}.
	\end{align*}
Then $F(\la)$ is an extension of $(D_0-\la)^{-1}C_0U^{-1}|T_{F}(\la) + 2s_\pi \delta|^{-\frac 12}$ since $L$ is an extension of $(D_0-\la)^{-1}C_0U^{-1}$. Moreover, it can be verified directly from the definition of the adjoint that  
$L^* \supset UB_0(D_0-\la)^{-1}$. Thus 
	\[
	F(\la)^* \supset |T_{F}(\la)+2 s_\pi \delta|^{-\frac 12}U B_0 (D_0-\lambda)^{-1}.  
	\] 
Hence the above relations for $u$ and $v$ show that $\binom uv = (\cA_{0,J}-K-\la)^{-1} \binom fg$ is given by
	\[
	\begin{pmatrix}u\\[4mm] v   \!\end{pmatrix}\!\!=\!\!
	\begin{pmatrix}\!
	U^{-1}\bigl(T_F(\la)\!-\!P_0\bigr)^{-1}U
	\!\!&\!\! - U^{-1}\bigl(T_F(\la)\!-\!P_0\bigr)^{-1} |T_F(\la)\!+\!2 s_\pi \delta|^{\frac{1}{2}}  \,F(\lambda)^* \\[2ex]
	-F(\lambda)|T_F(\la) \!+\! 2 s_\pi \delta|^{\frac{1}{2}} \bigl(T_F(\la)\!-\!P_0\bigr)^{-1}U  
	\!\!&\!\! \:(D_0\!-\!\lambda)^{-1}\!+\!F(\lambda)  G(\lambda) \,F(\lambda)^*
	\!\end{pmatrix}
	\!\!\!\begin{pmatrix}\!f\\[4mm]  g \end{pmatrix}\!.
	\]

The operator $\bigl(T_F(\la)-P_0\bigr)^{-1}|T_F(\la) + 2 s_\pi \delta|^{\frac{1}{2}}$ has a bounded extension to $\mathcal{H}_1$. If we show that $G(\la)$ has a bounded extension on $\mathcal{H}_1$ and $F(\la)$ is bounded on $\mathcal{H}_2$, then all entries in the above operator matrix are bounded and hence $(\cA_{0,J}-K-\lambda)^{-1}$ is bounded on $\ran(\cA_{0,J}-K-\la)$.

Since $G(\la)$ can be written as an orthogonal sum of bounded operators  
	\[
	G(\la)= s_\pi \left[ 
	\Bigl(I+2\delta (I-P_0) \bigl((I-P_0)T_F(\la)(I-P_0) \bigr)^{-1}\Bigr) \oplus 2\delta P_0
	\right],
	\]
the existence of a bounded extension of $G(\la)$ follows.
	
Next we prove the boundedness of $F(\la)$. It can be shown that $\mathcal{D}(t_F(\la))$ is contained in $\mathcal{D}(L)$. Thus $F(\la)$ is everywhere defined. 
For arbitrary $f\in \mathcal{H}_1$ 
	and $g:=|T_F(\la) + 2 s_\pi \delta|^{-1/2}f \in \mathcal{D}(t_F(\la))$, we obtain
	\begin{align*}
	\frac{\|F(\la)f\|_{\mathcal{H}_2}^2}{\|f\|_{\mathcal{H}_1}^2} = \frac{\|Lg\|_{\mathcal{H}_2}^2}{\||T_F(\la) + 2 s_\pi \delta|^{1/2}g\|_{\mathcal{H}_1}^2}.
	\end{align*}
	Hence the second representation theorem \cite[Theorem~VI.2.23]{Kato} yields
	\begin{align}
	\label{lastestimate}
	\frac{\|F(\la)f\|_{\mathcal{H}_2}^2}{\|f\|_{\mathcal{H}_1}^2} 
	= 
	\frac{\|Lg\|_{\mathcal{H}_2}^2}{s_\pi (t_F[g]+2 s_\pi \delta\|g\|_{\mathcal{H}_1}^2 ) } 
	\leq \frac{2\bigl\|\|\eta^{\frac{1}{2}}l_1(\cdot,\la)\|_{\CC^n}\bigl(\eta^{-\frac{1}{2}}g\bigr)'\bigr\|_{\mathcal{H}_1}^2 + 2\bigl\|\|l_2(\cdot,\la)\|_{\CC^n}g\bigr\|_{\mathcal{H}_1}^2}{\|\sqrt{\eta |\tpi(\cdot,\la)| }\partial_t \eta^{-\frac{1}{2}}g\|_{\mathcal{H}_1}^2 + \|\sqrt{V_{\eta}+\delta}g\|_{\mathcal{H}_1}^2 + \delta\|g\|_{\mathcal{H}_1}^2 }.                                                                                            %
	\end{align}
	Now \eqref{Veta.below} in  Assumption \ref{ass:Schur} implies the boundedness of $F(\la)$ on $\mathcal{H}_1$ 
	which completes the proof of \eqref{hot} and hence of \eqref{Aequivschur}.
	
	In order to prove \eqref{A.ess.dec}, we use that by Glazman's decomposition principle \eqref{glaz} 
	\begin{equation*}
	\la \in \sess(\cA) \iff  \la \in \big( \sess(\cA_{(\alpha,t_\la)})\,\cup\,\sess(\cA_{(t_\la,\beta)}) \big).
	\end{equation*}
	By Proposition \ref{prop:regpart}, $\sess(\cA_{(\alpha,t_\la)}) = \ov{ \Lambda_{[\alpha,t_\la)}} \subset \ov{ \Lambda_{\citv}} =  \sessreg(\cA)$. Since $\la \notin \sessreg(\cA)$ by assumption, it follows that $\la \notin \sess(\cA_{(\alpha,t_\la)})$. Now \eqref{A.ess.dec} follows from \eqref{Aequivschur}. 
\end{proof}
\section{Singular part of essential spectrum}
\label{sec:mainresults}

In this section, we analyse the singular part $\sesssing(\cA) = \sess(\cA) \setminus \sessreg(\cA)$ of the essential spectrum. 
Together with Proposition \ref{prop:regpart} describing the regular part of the essential spectrum, we thus 
obtain a full characterization of $\sess(\cA)$ up to the exceptional set $\Lambda_\beta(D)$.

First we provide conditions for $\sesssing(\cA) \setminus \Lambda_\beta(D)=\emptyset$. If $\sesssing(\cA) \setminus \Lambda_\beta(D)\ne \emptyset$, we establish an analytic description of this set in terms of the coefficients of the given operator matrix $\cA_0$ in \eqref{A02}.

\subsection{Criteria for $\sesssing(\cA) \setminus \Lambda_\beta(D) = \emptyset$}

By Lemma \ref{lem:AS1}, we know that $\sesssing(\cA) \setminus \Lambda_\beta(D) \!=\! \emptyset$ if $\sess(S_{\itvla}(\la)) \!=\! \emptyset$ for all $\la\in \RR \setminus \big( \sessreg(\cA) \cup \Lambda_\beta(D) \big)$. The latter holds for instance if $S_{\itvla}(\la)$ is in limit-circle case at $\beta$, see \cite[Theorem 10.12.1(2)]{Zettl}. The possibility of employing the limit-point/circle classification was mentioned in \cite{KLN08} and used (for a Hamiltonian system) in \cite{QCH11}.
While this provides only an implicit characterization, our conditions on the coefficient functions are explicit and not restricted to the limit-circle case at~$\beta$. 
Moreover, we refute the suspicion raised in \cite[p.\ 137]{KLN08} that limit-point case is crucial for $\sesssing(\cA)\ne \emptyset$ (see Example~\ref{conj-lc}~below).

\begin{ass}\label{ass:abs}
	Suppose that, for every $\la\in \RR \setminus \big( \sessreg(\cA) \cup \Lambda_\beta(D) \big)$, there exists $ \wt t_\la \!\in\! \citvla$ with $t_\la$ as in Assumption~\ref{ass:Schur} such~that one of the following holds.
\vspace{1mm}	
	\begin{enumerate}[label={{\upshape(C\arabic{*})}}]
	\item \label{ass:C1} $\eta \, h(\cdot,\la)^2 \in L^1( \itvlat)$ with
		\begin{equation}\label{h.def}
		h(t,\la) := 
		\begin{cases}
		\ds \left( \int_t^{\beta}\frac{\d s}{\eta(s) |\tpi(s,\la)| } \right)^{\frac 12} 
		& \text{if} \quad \dfrac 1{\eta \tpi(\cdot,\la)} \in L^1(\itvlat),
		\\[3ex]
		\ds \left( \int_{\wt t_\la}^t\frac{\d s}{\eta(s)|\tpi(s,\la)|} \right)^{\frac 12}
		& \text{otherwise};
		\end{cases}
		\end{equation}
\vspace{1mm}
	\item \label{ass:molchanov}
	\begin{enumerate}
	\item \label{molchanov01} $\ds\tpi(\mydot,\la)$, $\ds\frac{1}{\tpi(\mydot,\la)}$ are bounded on $\itvlat$;
	\item \label{molchanov02}
		$\ds s_{\tpi}\bigg(\varkappa(\cdot,\la) - \frac{r(\cdot,\la)^2}{\tpi(\cdot,\la)}\bigg)$ is bounded from below on $\itvlat$;

\vspace{1mm}
	moreover, if $\beta=\infty$, for all $d>0$,
		\begin{equation}\label{molchanov03}
		\ds\lim_{c\to\infty}\int_{c-d}^{c+d} s_{\tpi}\bigg(\varkappa(t,\la) - \frac{r(t,\la)^2}{\tpi(t,\la)}\bigg) \, \d t = \infty.
		\end{equation}
	\end{enumerate}
	\end{enumerate}
\end{ass}

The next theorem shows that if one of the conditions \ref{ass:C1} or \ref{ass:molchanov} in Assumption \ref{ass:abs} above is satisfied, the singular part of the essential spectrum of $\cA$ outside of $\Lambda_\beta(D)$ is empty.

\begin{theorem} \label{thm:ess.abs}
	Let Assumptions \ref{ass:regularity}, \ref{ass:Schur}, and \ref{ass:abs} be satisfied. Then, for every closed symmetric extension $\cA$ of $\cA_0$ defined in \eqref{A02}, $\sesssing(\cA) \setminus \Lambda_\beta(D) = \emptyset$, i.e.
	\begin{equation}
	\label{lastitem}
	\sess(\cA) \setminus \Lambda_\beta(D)
	= \sessreg(\cA) \, \setminus \Lambda_\beta(D).
	\end{equation}
\end{theorem}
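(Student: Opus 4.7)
The plan is to reduce everything to a one-dimensional spectral problem via Lemma~\ref{lem:AS1} and then check, under each of the alternatives in Assumption~\ref{ass:abs}, that the reduced operator has purely discrete spectrum near $0$. Fix $\la\in\RR\setminus(\sessreg(\cA)\cup\Lambda_\beta(D))$; if this set is empty the theorem is vacuous. By \eqref{A.ess.dec} I need $0\notin\sess(S_{\itvla}(\la))$. Because the minimal operator generating $S_{\itvla}(\la)$ has deficiency indices at most $2$ (Remark~\ref{schurdef}), every self-adjoint extension has the same essential spectrum, so it is enough to analyse the Friedrichs extension $T_F(\la)$ built from the gauge-transformed operator $T_0(\la)=US_{0,\itvla}(\la)U^{-1}$ in the proof of Lemma~\ref{lem:AS1}. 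Up to the global sign $s_\pi$, $T_F(\la)$ is a semi-bounded scalar Sturm--Liouville operator with leading coefficient $\eta|\tpi(\cdot,\la)|$ and potential $V_\eta(\cdot,\la)$, regular at the finite endpoint $t_\la$ and singular only at $\beta$; my task is to show its spectrum is discrete.

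Under \ref{ass:C1}, I would first perform the weight-removing substitution $u=\phi/\sqrt{\eta}$, which unitarily transforms $T_F(\la)$ into an ordinary Sturm--Liouville operator $-\partial_t(\eta|\tpi|u')+\eta V_\eta u$ on the weighted space $L^2((t_\la,\beta),\eta\,dt)$. The definition of $h(\cdot,\la)$ in the two branches of \eqref{h.def} is designed so that $h^2$ coincides, up to an additive constant, with a non-trivial homogeneous solution of $-(\eta|\tpi|u')'=0$, and the integrability $\eta h(\cdot,\la)^2\in L^1(\wt t_\la,\beta)$ is then a classical limit-circle test at $\beta$ for this Sturm--Liouville problem. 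Once limit-circle is established, \cite[Theorem 10.12.1(2)]{Zettl} gives purely discrete spectrum of the semi-bounded self-adjoint realisation, so $0\notin\sess(T_F(\la))$.

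Under \ref{ass:molchanov}, I would choose $\eta\equiv 1$, reducing $T_F(\la)$ to $s_\pi(-\partial_t|\tpi|\partial_t+\varkappa-r^2/\tpi)$. Since \ref{molchanov01} makes $|\tpi|$ and $1/|\tpi|$ bounded, the Liouville change of variable $s=\int_{\wt t_\la}^t|\tpi(u,\la)|^{-1/2}\,du$ is a bi-Lipschitz homeomorphism of $(\wt t_\la,\beta)$ onto an interval $(0,\beta^*)$ with $\beta^*=\infty$ iff $\beta=\infty$, and converts the problem into one for a standard Schr\"odinger operator $-\partial_s^2+W(s,\la)$ up to a bounded multiplicative conjugation. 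Item \ref{molchanov02} translates the lower boundedness of $s_\pi(\varkappa-r^2/\tpi)$ into semi-boundedness of $W$, and when $\beta=\infty$ the condition \eqref{molchanov03} becomes exactly the Molchanov integral condition $\int_{s-d}^{s+d}W(u,\la)\,du\to\infty$; Molchanov's classical criterion then yields purely discrete spectrum. In the case $\beta<\infty$ we are on a bounded interval with a potential that is bounded from below and compactness of the resolvent is automatic. Either way $0\notin\sess(T_F(\la))$.

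The main obstacle will be the careful bookkeeping needed to verify that the abstract hypotheses genuinely translate into the standard-form hypotheses of the limit-circle and Molchanov theorems, especially the identification in case \ref{ass:C1} of $h$ with a homogeneous solution, which requires handling separately the two branches of \eqref{h.def} corresponding to whether $\int 1/(\eta|\tpi|)$ converges or diverges at $\beta$. By contrast, the extension-theoretic issues (invariance of $\sess$ under finite-deficiency self-adjoint extensions, passage to the Friedrichs extension) come for free from the groundwork laid in Lemma~\ref{lem:AS1} and Remark~\ref{schurdef}.
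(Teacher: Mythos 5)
Your overall architecture (reduction via Lemma~\ref{lem:AS1}, passage to the Friedrichs extension $T_F(\la)$, then a discreteness argument at the singular endpoint) matches the paper, and your treatment of alternative \ref{ass:molchanov} is essentially the paper's: compact embedding of the form domain into $L^2$ for $\beta<\infty$, Molchanov's criterion for $\beta=\infty$. The Liouville change of variable you insert is harmless given \ref{molchanov01}, though the paper applies the weighted version of Molchanov's criterion directly.

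The branch \ref{ass:C1} contains a genuine gap. You identify $\eta h(\cdot,\la)^2\in L^1(\itvlat)$ as ``a classical limit-circle test at $\beta$'' and plan to conclude via \cite[Theorem 10.12.1(2)]{Zettl}. This misreads the condition. For the potential-free equation $-(\eta|\tpi|u')'=0$ in $L^2(\itvlat,\eta)$ the two solutions are $1$ and $h^2$, so limit circle at $\beta$ would require $\eta\in L^1$ \emph{and} $\eta\,h^4\in L^1$ near $\beta$ --- not $\eta\,h^2\in L^1$; these conditions are genuinely different whenever $h$ is unbounded, which is exactly what happens in the second branch of \eqref{h.def} (e.g.\ in Cases (I) and (II) of Theorem~\ref{prop:suff}, where $h(t,\la)^2\sim |\log(\beta-t)|$). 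Worse, even if the free equation were limit circle, the potential $V_\eta(\cdot,\la)$ is controlled only from below by \eqref{Veta.below} and typically blows up like $(\beta-t)^{-2}$ (see \eqref{Veta.suff.I}); a potential $c(\beta-t)^{-2}$ with $c\ge 3/4$ pushes a limit-circle endpoint into limit point, so the full Schur complement need not be limit circle under Assumptions \ref{ass:Schur}, \ref{ass:abs}. The paper stresses this point: the whole design of \ref{ass:C1} is to avoid the limit-point/circle dichotomy (see the discussion preceding Assumption \ref{ass:abs} and Example~\ref{conj-lc}). The condition $\eta h^2\in L^1$ is instead the hypothesis of Rollins' discreteness criterion for the semibounded weighted Sturm--Liouville operator $\frac{1}{\eta}\bigl(-\partial_t\,\eta|\tpi(\cdot,\la)|\,\partial_t+\eta(V_\eta+\delta)\bigr)$ in $L^2(\itvlat,\eta)$ (with $\delta$ chosen so that $V_\eta+\delta\ge0$), which yields compact resolvent --- hence $0\notin\sess(T_F(\la))$ --- irrespective of the endpoint classification. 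Replacing your limit-circle step by this discreteness theorem repairs the argument; as written, the step would fail.
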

\begin{proof}
	Let $\wt t_\la \in[t_\la,\beta)$ be as in Assumption \ref{ass:abs}. 
	The claim in \eqref{lastitem} follows from Lemma \ref{lem:AS1} if we verify that $\sess(S_{ \itvlat }(\la))=\emptyset$. Let $T_F(\la)$ be the Friedrichs extension of $T_0(\la)$ as in the proof of Lemma~\ref{lem:AS1}. Since $T_F(\la)$ is a finite dimensional extension of $US_{(\wt t_\la,\beta)}(\la)U^{-1}$ and $U$ is unitary, we have $\sess(T_F(\la))=\sess(S_{(\wt t_\la,\beta)}(\la))$. 
	
	First consider the case when Assumption \ref{ass:C1} holds. Since the operator 
	\begin{equation*}
	U_{\eta}:L^2(\itvlat)  \to L^2(\itvlat, \eta), 
	\quad
	U_{\eta}f:=\frac 1{\sqrt{\eta}} f
	\end{equation*}
	is unitary, we have $\sess(T_F(\la)) = \sess(U_{\eta}T_F(\la)U_{\eta}^{-1})$. Hence it suffices to show that 
	\begin{align}
	\label{essrol}
	\sess(U_{\eta}T_F(\la)U_{\eta}^{-1})=\emptyset.
	\end{align}
	With $\delta$ as in the proof of the Lemma~\ref{lem:AS1}, it is not hard to see that the operator
	\begin{align}
	\label{rol1}
	U_{\eta}T_F(\la)U_{\eta}^{-1} +s_\pi \delta= \frac{s_\pi}{\eta}\Bigl(-\pt\eta|\tpi(\cdot,\la)|\pt + \eta \bigl(V_{\eta}+\delta\bigr)\Bigr),
	\end{align}
	acting in the weighted Hilbert space $ L^2( \itvlat, \eta)$, coincides with the operator $T$ (up to the inessential overall sign $s_\pi \in \{-1,1\}$) in \cite{Rollins} with $m=\eta$, $p_0=\eta\bigl(V_{\eta}+\delta\bigr)$, and $p_1=\eta|\tpi(\cdot,\la)|$. By Assumptions \ref{ass:Schur} and \ref{ass:abs}, the differential operator in \eqref{rol1} satisfies all conditions of Theorem in \cite{Rollins}, and therefore, has compact resolvent, i.e.~\eqref{essrol} holds (cf.\ \cite[Theorem IX.3.1]{EE87}).
	
	Now consider the case when Assumption \ref{ass:molchanov} holds.  If $\beta< \infty$, then the form domain $\cD(t_F(\la))$ of $T_F(\la)$ with $\eta=1$ is a subset of $W^{1,2}(\itvlat)$ which is compactly embedded in $L^2(\itvlat)$ (cf.\ \cite[Theorem~6.3]{Adams-2003}). Hence $\sess(T_F(\la)) = \emptyset$ by \cite[Theorem~XIII.64]{Reed4}.
	
	If $\beta=\infty$, then Assumption \ref{ass:molchanov} guarantees that the assumptions of Molcanov's criterion are satisfied (cf. \cite[Theorem VIII.4.2]{EE87}) and the latter yields $\sess(T_F(\la)) = \emptyset$.
\end{proof}
\begin{example}
\label{conj-lc}
One of the main results of \cite{KLN08} implies that, for the operator $\cA_0$ considered in \eqref{KLNO8A_0}, see Example A  below, $\sesssing(\cA)\ne\emptyset$ if the Schur complement is in limit-point case at $\beta=0$, see \cite[Theorem~6.1]{KLN08}. The suspicion raised in \cite[p.\ 137]{KLN08} that this may be true in general is disproved by the following simple example.

Consider \eqref{A02} on $L^2((1,\infty))$ with $n=1$ and coefficient functions $p\equiv c\equiv 1$, $b\equiv 0$, $q(t)=t^2$, $D(t)=\frac{1}{t}$, $t\in[1,\infty)$. Then $\Delta(t)=\frac{1}{t}$, $t\in[1,\infty)$, $\Lambda_{\beta}(D)=\{0\}$ and the Schur complement is given by 
\[
S(\la) = -\frac{\text{d}^2}{\text{d}t^2} + V(t,\la), \quad V(t,\la) := t^2-\la-\frac{t}{1-\la t}, \quad \la\notin [0,1]. 
\]
It is easy to see that Assumptions \ref{ass:regularity}, \ref{ass:Schur}, and \ref{ass:molchanov} are satisfied. In particular, there is a $t_\la > 1$ such that $V(\cdot,\la)$ is bounded from below on $[t_\la,\infty)$. Consequently, Theorem \ref{thm:ess.abs} applies and yields 
\[
\sesssing(\cA) = \emptyset, \quad \sess(\cA)= \sessreg(\cA) = [0,1].
\]
However, since $t_\la$ is a regular endpoint and $V(\cdot,\la)$ is bounded from below on $[t_\la,\infty)$, $S(\la)$ is in limit-point case at $\infty$, see \cite[Proposition~4.8.9]{BEH}, but $\sesssing(\cA) = \emptyset$.
\end{example}      
\subsection{Description of $\sesssing(\cA) \setminus \Lambda_\beta(D) \neq \emptyset$} 
The following result characterizes the singular part of the essential spectrum in terms of the limits of some functions formed out of the coefficients of the original operator matrix $\cA_0$ at the singular endpoint $\beta$. For the proof, we need the following~assumptions. 

\begin{ass}\label{ass:pres}
Suppose that
\vspace{1mm}

\begin{enumerate}[label={{\upshape(D\arabic{*})}}]
\item \label{ass:pres.reg} $p\in C^2(\citv,\RR)$,  $b\in C^2(\citv,\CC^n)$ and $D\in C^2(\citv,\CC^{n\times n})$;
\vspace{2mm}

\item \label{ass:pres.pi} 
for every $\la\in \RR \setminus \big( \sessreg(\cA) \cup \Lambda_\beta(D) \big)$, there exists $ \wt t_\la \!\in\! \citvla$ with $t_\la$ as in Assumption~\ref{ass:Schur}, such that 
\begin{equation}
\wt{\pi}(\mydot,\la),  \ \frac{1}{\wt{\pi}(\mydot,\la)} \ \text{  are bounded on } \itvlat,
\end{equation}
where 
\begin{align}\label{wtpi}
\wt{\pi}(t,\la) := 
\begin{cases} 
\dfrac{\pi(t,\la)}{(\beta-t)^2} & \mbox{if } \beta<\infty, \\
\hspace{1.5mm} \pi(t,\la) & \mbox{if } \beta=\infty;
\end{cases}
\end{align}
\vspace{1mm}
\item \label{ass:pres.lim} 
\
for every $\la\in \RR \setminus \big( \sessreg(\cA) \cup \Lambda_\beta(D) \big)$, the limits 
\[
\wt r_{\beta}(\lambda) := \lim_{t\nearrow\beta} \wt{r}(t,\la),
\qquad \wt\varkappa_{\beta}(\lambda) :=\lim_{t\nearrow\beta} \wt{\varkappa}(t,\la)
\]
exist and are finite where 
\begin{alignat*}{2}
\wt{r}(t,\la) &:= 
\left\{
\begin{array}{rl} 
\!\!(\beta-t)\dfrac{r(t,\la)}{\pi(t,\la)} & \mbox{if } \beta<\infty, \\
 \dfrac{r(t,\la)}{\pi(t,\la)} & \mbox{if } \beta=\infty, 
\end{array}
\right.
& \quad
\wt{\varkappa}(t,\la) &:= 
\left\{
\begin{array}{rl} 
\!\!(\beta-t)^2\dfrac{\varkappa(t,\la)}{\pi(t,\la)} & \mbox{if } \beta< \infty, \\
\dfrac{\varkappa(t,\la)}{\pi(t,\la)} & \mbox{if } \beta= \infty;
\end{array}
\right. \hspace{-7mm}
\intertext{moreover, assume that the functions }
\Phi_1(t,\la) &:=
\left\{
\begin{array}{rl} 
\!\!(\beta-t)\dfrac{\frac{\partial}{\partial t}\pi(t,\la)}{\pi(t,\la)} & \mbox{if } \beta< \infty, \\
 \dfrac{\frac{\partial}{\partial t}\pi(t,\la)}{\pi(t,\la)} & \mbox{if } \beta= \infty, 
\end{array}
\right.
& \quad
\Phi_2(t,\la) &:=
\left\{
\begin{array}{rl} 
\!\!(\beta-t)^2\dfrac{\frac{\partial}{\partial t}r(t,\la)}{\pi(t,\la)} & \mbox{if } \beta< \infty, \\
 \dfrac{\frac{\partial}{\partial t}r(t,\la)}{\pi(t,\la)} & \mbox{if } \beta= \infty, 
\end{array}
\right. \hspace{-7mm}
\end{alignat*}
have finite limits as $t\nearrow \beta$.
\end{enumerate}
\end{ass}
Assumption \ref{ass:pres} has the following important consequence. The proof of the following lemma, which is based on Gronwall's inequality, is analogous to the proof of \cite[Lemma~4.2]{ILLT13} and thus omitted.
\begin{lemma}
\label{lem:pres}
Let Assumption \ref{ass:pres} be satisfied. Then, for every 
$\la\in \RR \setminus \big( \sessreg(\cA) \cup \Lambda_\beta(D) \big)$,
we have
\begin{align*}
\lim_{t\nearrow\beta}\Phi_1(t,\la) = 
\begin{cases}
-2 & \mbox{if } \ \beta< \infty, \\
\ 0  & \mbox{if } \ \beta= \infty,
\end{cases}
\qquad
\lim_{t\nearrow\beta}\Phi_2(t,\la) = 
\begin{cases}
-\wt r_{\beta}(\la) & \mbox{if } \ \beta< \infty \\
\ \ \ 0               & \mbox{if } \ \beta= \infty.
\end{cases}
\end{align*}
\end{lemma}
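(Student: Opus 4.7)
The approach is to derive explicit algebraic identities that express $\Phi_1$ and $\Phi_2$ in terms of logarithmic derivatives of $\wt\pi(\cdot,\la)$ and $\wt r(\cdot,\la)$, and then to combine the existence of finite limits (Assumption~\ref{ass:pres}) with the boundedness of $\wt\pi(\cdot,\la)$ and $1/\wt\pi(\cdot,\la)$ (same assumption) in order to force these limits to take precisely the asserted values, mirroring \cite[Lemma~4.2]{ILLT13}.

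For $\Phi_1$, I would first treat $\beta<\infty$. Differentiating $\pi(t,\la) = (\beta-t)^2\,\wt\pi(t,\la)$ gives
\begin{equation*}
\Phi_1(t,\la) \,=\, -2 \,+\, (\beta-t)\,\frac{\partial_t\wt\pi(t,\la)}{\wt\pi(t,\la)},
\end{equation*}
so the claim $\lim_{t\nearrow\beta}\Phi_1 = -2$ is equivalent to $(\beta-t)\,\partial_t\wt\pi/\wt\pi \to 0$. Let $\ell$ denote the (finite, by hypothesis) limit of this quantity and note that $\wt\pi(\cdot,\la)$ has constant sign near $\beta$ by Lemma~\ref{lem:tlam.1}. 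Integrating $\partial_t\log|\wt\pi(\cdot,\la)|$ from a suitable $t_0\in\itvlat$ to $t<\beta$ yields
\begin{equation*}
\log\frac{|\wt\pi(t,\la)|}{|\wt\pi(t_0,\la)|} \,=\, -\ell\log\frac{\beta-t}{\beta-t_0} \,+\, \int_{t_0}^{t}\frac{o(1)}{\beta-s}\,\d s,
\end{equation*}
and a Gronwall-type estimate on the remainder shows that $|\wt\pi(t,\la)|$ grows like $(\beta-t)^{-\ell}$ up to bounded factors as $t\nearrow\beta$. The simultaneous boundedness of $\wt\pi(\cdot,\la)$ and $1/\wt\pi(\cdot,\la)$ on $\itvlat$ from Assumption~\ref{ass:pres} then forces $\ell=0$. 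The case $\beta=\infty$ is treated analogously, with $\wt\pi = \pi$ and $\Phi_1 = \partial_t\log|\pi|$; the same integration combined with boundedness of $\pi$ and $1/\pi$ forces $\lim\Phi_1 = 0$.

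For $\Phi_2$ I would proceed in parallel. Using $r = \wt r\,\pi/(\beta-t)$ for $\beta<\infty$ (resp.\ $r = \wt r\,\pi$ for $\beta=\infty$), the product rule yields
\begin{equation*}
\Phi_2(t,\la) \,=\, \bigl(\Phi_1(t,\la)+1\bigr)\,\wt r(t,\la) \,+\, (\beta-t)\,\partial_t\wt r(t,\la) \quad (\beta<\infty),
\end{equation*}
and $\Phi_2 = \wt r\,\Phi_1 + \partial_t\wt r$ when $\beta=\infty$. By the first part and Assumption~\ref{ass:pres}, the first summand tends to $-\wt r_\beta(\la)$ resp.\ $0$, so the existence of a finite limit of $\Phi_2$ forces $(\beta-t)\,\partial_t\wt r$ (resp.\ $\partial_t\wt r$) to have a finite limit as well. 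A second logarithmic-integration argument of the above type, now applied to $\wt r(\cdot,\la)$ (which is bounded because it possesses the finite limit $\wt r_\beta(\la)$), forces this remaining limit to be $0$, yielding the asserted values of $\lim\Phi_2$.

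The main obstacle is the careful bookkeeping of the $o(1)$ error term in the integrated form of $\partial_t\log|\wt\pi|$, so that the Gronwall-type estimate genuinely produces the asymptotic $|\wt\pi(t,\la)| \sim c\,(\beta-t)^{-\ell}$ required to invoke the boundedness hypothesis; once this step is in place, both assertions reduce to routine applications of the product and quotient rules together with the definitions of $\Phi_1,\Phi_2,\wt r,\wt\pi$.
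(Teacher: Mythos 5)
Your argument is correct and is essentially the one the paper intends: the paper omits the proof, referring to the Gronwall-based proof of Lemma~4.2 in the cited earlier work, and your plan (rewrite $\Phi_1=-2+(\beta-t)\,\partial_t\wt\pi/\wt\pi$, integrate the logarithmic derivative, and use the boundedness of $\wt\pi$ and $1/\wt\pi$ to force the extra limit to vanish, then feed this into the product-rule identity for $\Phi_2$ and integrate $\partial_t\wt r$ against the finiteness of $\wt r_\beta(\la)$) is exactly that Gronwall-type argument. The only cosmetic imprecision is the phrase ``up to bounded factors'': the integration gives $|\wt\pi(t,\la)|\asymp(\beta-t)^{-\ell+\o(1)}$, which still suffices to contradict Assumption \ref{ass:pres.pi} whenever $\ell\neq0$, as your bookkeeping remark already anticipates.
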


The following theorem does not only generalize \cite[Theorem~4.3]{ILLT13} to the matrix case $n \geq 1$, but it requires considerably weaker assumptions.
%
%
\begin{theorem} 
\label{thm:ess.pres}
Let Assumptions \ref{ass:regularity}, \ref{ass:Schur}, and \ref{ass:pres} be satisfied. Then, for every closed symmetric extension $\cA$ of $\cA_0$ defined in \eqref{A02}, 
\[
  \sess(\cA) \setminus \Lambda_\beta(D)
  = \big( \sessreg (\cA)\cup \sesssing (\cA)   \big) \, \setminus \Lambda_\beta(D)
\]
and
\begin{align*}
  \sesssing(\cA) \setminus \Lambda_\beta(D)
  &= \Bigl\{\la\in\RR \setminus \big( \sessreg (\cA) \cup \Lambda_\beta(D) \big) : \, \Dis_{\beta}(\la) \geq 0 \Bigr\} 
\end{align*} 
where
\begin{align}\label{Discriminant}
\Dis_{\beta}(\la) :=
\begin{cases} 
\wt{r}_{\beta}(\la)^2 - \wt{\varkappa}_{\beta}(\la) - \dfrac{1}{4} & \mbox{if } \ \beta< \infty, \\
\wt{r}_{\beta}(\la)^2 - \wt{\varkappa}_{\beta}(\la) & \mbox{if } \ \beta= \infty.
\end{cases}
\end{align}
\end{theorem}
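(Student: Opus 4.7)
The heart of the proof is the equivalence $\la\in\sess(\cA)\Leftrightarrow 0\in\sess(S_{\itvla}(\la))$ from Lemma~\ref{lem:AS1}, valid for every $\la\in\RR\setminus(\sessreg(\cA)\cup\Lambda_\beta(D))$. Using the unitary transformation $U$ from the proof of that lemma with $\eta\equiv 1$, this is equivalent to $0\in\sess(T_F(\la))$, where $T_F(\la)$ is the Friedrichs-type realization of
\[
T_0(\la)= -\pt\pi(\cdot,\la)\pt +\Bigl(\varkappa(\cdot,\la)-\tfrac{r(\cdot,\la)^2}{\pi(\cdot,\la)}\Bigr)
\]
on $L^2(\itvla)$. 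Since $\pi(\cdot,\la)$ has constant sign $s_\pi$ on $\citvla$, the operator $s_\pi T_F(\la)$ is semibounded from below, and $0\in\sess(T_F(\la))\Leftrightarrow 0\in\sess(s_\pi T_F(\la))$, so I may work exclusively with the semibounded realization.

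\textbf{Liouville normal form.} I would apply the Liouville change of variables $s(t)\defeq\int_{t_\la}^t |\pi(\tau,\la)|^{-1/2}\,\d\tau$ together with the gauge $u(t)=|\pi(t,\la)|^{-1/4}v(s(t))$. This unitarily conjugates $s_\pi T_F(\la)$ to a half-line Schr\"odinger-type operator $-\partial_s^2+W(s,\la)$ on $[0,s_\beta)$, where
\[
W(s,\la) = \tfrac{\varkappa}{\pi}-\tfrac{r^2}{\pi^2}+\text{(derivative corrections in $\pi'/\pi$, $\pi''/\pi$, $r'/\pi$)}.
\]
For $\beta=\infty$, Assumption~\ref{ass:pres} combined with Lemma~\ref{lem:pres} (which forces $\Phi_1,\Phi_2\to 0$) makes all derivative corrections vanish, so $W(s,\la)\to \wt\varkappa_\beta(\la)-\wt r_\beta(\la)^2=-\Dis_\beta(\la)$. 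For $\beta<\infty$, the quadratic vanishing $\pi\sim\wt\pi\,(\beta-t)^2$ from Assumption~\ref{ass:pres}\ref{ass:pres.pi} turns the change of variable into the Euler-type $s\sim -\log(\beta-t)$, the gauge $|\pi|^{-1/4}\sim(\beta-t)^{-1/2}$ is the Hardy one, and Lemma~\ref{lem:pres} (supplying $\Phi_1\to-2$, $\Phi_2\to -\wt r_\beta$) makes the derivative corrections yield the indicial constant $+\tfrac14$, so $W(s,\la)\to \wt\varkappa_\beta-\wt r_\beta^2+\tfrac14=-\Dis_\beta(\la)$.

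\textbf{Conclusion via the scalar criterion.} Once the operator is in the normal form $-\partial_s^2+W$ on a half-line with $W\to W_\infty=-\Dis_\beta(\la)$, the classical Hartman--Wintner-type essential spectrum criterion for scalar Sturm--Liouville operators gives $\sess(-\partial_s^2+W)=[W_\infty,\infty)$. Consequently, $0\in\sess(s_\pi T_F(\la))\Leftrightarrow W_\infty\le 0\Leftrightarrow \Dis_\beta(\la)\ge 0$. Combined with the reduction above and Proposition~\ref{prop:regpart}, this characterizes $\sesssing(\cA)\setminus\Lambda_\beta(D)$ and yields the full decomposition of $\sess(\cA)\setminus\Lambda_\beta(D)$.

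\textbf{Main obstacle.} The core technical difficulty is showing that after the Liouville--gauge transformation the effective potential genuinely \emph{converges} (not merely stays bounded) at the singular endpoint, and extracting the precise limit value $-\Dis_\beta(\la)$. This requires sharp control of the correction terms $\pi'/\pi$, $\pi''/\pi$, $r'/\pi$ produced by the gauge and coordinate change, and this control is exactly what Assumption~\ref{ass:pres}\ref{ass:pres.lim} and Lemma~\ref{lem:pres} provide via the auxiliary functions $\Phi_1,\Phi_2$: their Euler-consistent limits at $\beta$ are precisely what produce the constant $\tfrac14$ in the case $\beta<\infty$ and the vanishing in the case $\beta=\infty$. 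A minor secondary bookkeeping issue is tracking the sign $s_\pi$; this is handled once and for all by the invariance $0\in\sess(T_F)\Leftrightarrow 0\in\sess(s_\pi T_F)$.
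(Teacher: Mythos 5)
Your overall strategy---reduce to the Schur complement via Lemma~\ref{lem:AS1}, bring $S_{\itvla}(\la)$ into a canonical form near $\beta$, and read off membership of $0$ in the essential spectrum from the limiting coefficients---is the right one, and you correctly locate the source of the constant $\tfrac14$ when $\beta<\infty$. However, the particular normal form you choose does not close under Assumption~\ref{ass:pres}, for two reasons. First, \ref{ass:pres.pi} only asserts that $\wt{\pi}(\cdot,\la)$ and $1/\wt{\pi}(\cdot,\la)$ are \emph{bounded} near $\beta$, not that $\wt{\pi}(\cdot,\la)$ converges; what converges by \ref{ass:pres.lim} are the ratios $\wt{r}$, $\wt{\varkappa}$, $\Phi_1$, $\Phi_2$. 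After your gauge and Liouville transformations the effective potential contains $\varkappa(\cdot,\la)-r(\cdot,\la)^2/\pi(\cdot,\la)=\wt{\pi}(\cdot,\la)\bigl(\wt{\varkappa}(\cdot,\la)-\wt{r}(\cdot,\la)^2\bigr)$, and since $\wt{\pi}(\cdot,\la)$ may oscillate between its bounds, $W(s,\la)$ need not have a limit at all; no unitary conjugation can remove this factor. Second, the Liouville correction involves $\frac{\partial^2}{\partial t^2}\pi(\cdot,\la)$, which exists by \ref{ass:pres.reg} but whose behaviour at $\beta$ is completely uncontrolled by Assumption~\ref{ass:pres} (Lemma~\ref{lem:pres} controls only $\Phi_1$, i.e.\ $\pi'/\pi$, and $\Phi_2$, i.e.\ $r'/\pi$; a limit for $\pi_2(\la)$ is assumed only in Theorem~\ref{prop:suff}\,\ref{prop:suff.III} under the extra hypothesis \eqref{suff.lim.pi.r.kappa}). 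So the step ``$W(s,\la)\to-\Dis_\beta(\la)$'', which you rightly flag as the main obstacle, is a genuine gap and not mere bookkeeping.

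The paper avoids both problems by a \emph{non-unitary} normalization: it keeps $\tau_S(\la)$ in the non-divergence form \eqref{S.non.symm}, so that only $\frac{\partial}{\partial t}\pi$ and $\frac{\partial}{\partial t}r$, never $\frac{\partial^2}{\partial t^2}\pi$, enter; it performs the exponential change of variables $t=\varepsilon(x)$ when $\beta<\infty$; and it then \emph{divides the whole operator by the leading coefficient} $\pi(\varepsilon(\cdot),\la)/(\beta-\varepsilon(\cdot))^2$. Multiplication by this bounded, boundedly invertible function preserves the Fredholm property at $0$ and converts every coefficient into one of the convergent quantities $\wt{r}_\beta(\la)$, $\wt{\varkappa}_\beta(\la)$, $\lim\Phi_1$, $\lim\Phi_2$. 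The constant-coefficient comparison \cite[Corollary IX.9.4]{EE87}, applied with the first-order term retained, then yields the real-root criterion for $\xi^2+2\wt{r}_\beta(\la)\xi+\wt{\varkappa}_\beta(\la)+\tfrac14$, whose discriminant condition is exactly $\Dis_\beta(\la)\ge0$. To salvage your route you would have to either strengthen the hypotheses (convergence of $\wt{\pi}(\cdot,\la)$ and of $(\beta-t)^2\frac{\partial^2}{\partial t^2}\pi(t,\la)/\pi(t,\la)$), thereby proving a weaker theorem, or replace the unitary Liouville step by this division by the leading coefficient.
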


\begin{proof}
The differential expression $\tau_S(\la)$ in \eqref{Ssymm} can be rewritten as 
\begin{equation}\label{S.non.symm}
\tau_S(\la) 
=
-\pi(\cdot,\la)\dfrac{\d^2}{\d t^2} +\rho(\cdot,\la)\, \ii\dt + \kappa(\cdot,\la)
\end{equation}
where 
\begin{equation}\label{rho.kappa.def}
\begin{aligned}
\kappa(\cdot,\la) 
:= \varkappa(\cdot,\la) +\ii\frac{\partial}{\partial t}r(\cdot,\la),
\quad
\rho(\cdot,\la)  
:= 2r(\cdot,\la) + \ii\frac{\partial}{\partial t}\tpi(\cdot,\la).
\end{aligned}
\end{equation}
Let $\wt t_\la \in[t_\la,\beta)$ be as in Assumption \ref{ass:pres}. 
By Lemma \ref{lem:AS1}, 
\begin{equation}\label{sess-equiv}
\la \in \sess(\cA) \setminus (\sessreg(\cA) \cup \Lambda_\beta(D) ) \  \iff \  0 \in \sess(S_{(\wt t_\la,\beta)}(\la)).
\end{equation}

To analyse when $0 \in \sess(S_{(\wt t_\la,\beta)}(\la))$, we use \cite[Corollary~IX.9.4]{EE87}. We distinguish the cases $\beta< \infty$  and $\beta= \infty$. If $\beta<\infty$, we consider the unitary transformation  
\begin{equation*}
U : L^2(\itv) \to L^2((\alpha,\infty)), \quad (Uf)(x) := \psi(x)f(\varepsilon(x)),
\end{equation*}
with
\[
\varepsilon(x) := (\alpha-\beta)\e^{-(x-\alpha)}+\beta, \qquad \psi(x) := \sqrt{\beta-\alpha} \, \e^{-\frac{1}{2}(x-\alpha)}, \quad x\in [\alpha,\infty).
\]
Then $\varepsilon$ maps $[\alpha,\infty)$ bijectively onto $[\alpha,\beta)$ and
\[
U\dx U^{-1} = \frac{1}{\beta-\varepsilon(\cdot)}\dx + \frac{1}{2}\frac{1}{\beta-\varepsilon(\cdot)}.
\]
Now it is not difficult to check that Assumption \ref{ass:pres} and Lemma \ref{lem:pres} ensure that, for fixed $\la$, 
\[
\ds\frac{(\beta-\varepsilon(\cdot))^2}{\tpi(\varepsilon(\cdot), \la)}US_{(\wt t_\la,\beta)}(\la)U^{-1}
\]
satisfies the assumptions of \cite[Corollary~IX.9.4]{EE87}\footnote{Note that it is enough to require $a_1'\in L^{\infty}_{\rm loc}(I)$ in \cite[p.\ 445, (iii)]{EE87} for \cite[Corollary~IX.9.4]{EE87}, cf.\ \cite[p.\ 437 top]{ILLT13}.} with $m=2$, $a_2=-1$, $a_1 = 2\wt r_{\beta}(\la)$, and $a_0 = \wt\varkappa_{\beta}(\la)+\frac{1}{4}$. Hence \cite[(9.19)]{EE87} with $k=3$ applies and yields 
\[
  0 \in \sess(S_{(\wt t_\la,\beta)}(\la)) = \sess\Bigg(\frac{(\beta-\varepsilon(\cdot))^2}{\tpi(\varepsilon(\cdot), \la)}US_{(\wt t_\la,\beta)}(\la)U^{-1}\Bigg) 
  \ \iff \
  \exists\, \xi \in \RR : \xi^2 + 2\wt r_{\beta}(\la) \xi + \wt\varkappa_{\beta}(\la)+\frac{1}{4} =0, 
\]
which is, in turn, equivalent to $4\Dis_{\beta}(\la) = 4\wt r_{\beta}(\la)^2-4\wt\varkappa_{\beta}(\la)-1\ge 0$. The claim now follows from Lemma \ref{lem:AS1}.

If $\beta=\infty$, no unitary transform is needed. Our assumptions ensure that $S_{(\wt t_\la,\infty)}(\la)$ itself satisfies the assumptions of \cite[Corollary~IX.9.4]{EE87}. The proof can be finished in the same way as above. 
\end{proof}

\section{Sufficient conditions for Assumptions \ref{ass:regularity} to \ref{ass:pres}} 
\label{subsec:suff}

In this section, we derive sufficient conditions for the assumptions of Lemma \ref{lem:AS1}, Theorem \ref{thm:ess.abs}, and Theorem~\ref{thm:ess.pres} that are easier to verify in applications.
We restrict ourselves to the case when the singular endpoint is finite, $\beta<\infty$, since the case $\beta=\infty$ can be easily transformed to the finite interval case.

To this end, in Assumption \ref{ass:suff} below, we will assume that, 
near the singular endpoint $\beta$, the function $\tpi(\cdot,\la)$ has the asymptotic expansion 
\begin{align}\label{pi_asymp}
\tpi(t,\la) = \tpi_0(\la) + \tpi_1(\la)(t-\beta) + \mathcal{R}(t,\la) \quad \text{with} \quad \cR(t,\la)=\o(\beta-t), \ t\nearrow\beta.
\end{align}
Since $\tpi(\cdot,\la) \in C^1([\alpha,\beta),\RR)$ by Assumption \ref{ass:regularity}, we have 
\[
 \tpi_0(\la)=\lim_{t\nearrow\beta}\tpi(t,\la), \quad \tpi_1(\la)=\lim_{t\nearrow\beta}\frac{\partial}{\partial t}\tpi(t,\la). 
\]
To describe the singular part of the essential spectrum, we distinguish the following possible cases.
\begin{align}\label{Cases}
\nonumber
&\textbf{Case (I)}:   &&\tpi_0(\la) \neq 0;                   &  \hspace{550mm}
\\
&\textbf{Case (II)}:  &&\tpi_0(\la) = 0,  \ \tpi_1(\la) \neq 0; & 
\\ \nonumber
&\textbf{Case (III)}: &&\tpi_0(\la) =0, \ \tpi_1(\la) = 0.      &  
\end{align}
\setcounter{ass}{18}
\begin{ass} \label{ass:suff}
Suppose that, for every $\la \in \RR \setminus \big( \sessreg(\cA) \cup \Lambda_\beta(D) \big)$ and  $t\nearrow\beta$,
\begin{align}
\label{ass:S.Rb}
\mathcal{R}(t,\la) & = 
\begin{cases} 
\ds \o(\beta-t) 
& \text{in Case (I)},
\\
\ds \o \left( \frac{\beta-t}{|\log(\beta-t)|} \right) 
& \text{in Case (II)},
\\
\ds \O ( (\beta-t)^2 ) 
& \text{in Case (III)},
\end{cases}%
\end{align}
\begin{align}
\hspace{-3mm}
\|(D(t)\!-\!\la)^{-1}b(t)\|^2_{\CC^n} \!=\! 
\begin{cases} 
\ds \O(1)
&\!\! \text{in Case (I)},
 \\
\ds \o(\beta-t)
 &\!\! \text{in Case (II)},
\\
\ds \O((\beta\!-\!t)^2)
&\!\! \text{in Case (III)},
\end{cases}
\ \
\label{ass:S.r}
r(t,\la)\!=\! 
\begin{cases} 
\ds \o \left(\frac 1{\beta-t}\right) 
&\!\! \text{in Case (I)},
\\[2.5mm]
\ds \o \left( \frac 1{|\log(\beta\!-\!t)|} \right)
&\!\! \text{in Case (II)},
\\[3.1mm]
\O(\beta-t)
&\!\! \text{in Case (III)},
\end{cases}
\end{align}
and, with $s_\pi =\sgn (\tpi(\cdot,\la)\!\!\restriction\! _{\citvla})$ defined as in \eqref{spi.def} and $f_-:=(f-|f|)/2$ denoting the negative part of a real-valued function ~$f$, 
\begin{equation}
\begin{aligned}
\label{ass:S.kappac}
\exists \ \varepsilon >0: \ \Big(s_\pi \varkappa(t,\la) - \varepsilon \|(D(t)-\la)^{-1}c(t)\|^2_{\CC^n}
\Big)_{-}  &= 
\begin{cases} 
\ds \o \left(\frac 1{(\beta-t)^{-2}}\right)
& \text{in Case (I)},
\\[2.5mm]
\ds\o \left(\frac 1{ (\beta-t) \log(\beta-t)^{2}}\right) 
& \text{in Case (II)},
\\[3.1mm]
\O(1)
& \text{in Case (III)}.
\end{cases}
\end{aligned}
\end{equation}
\end{ass}

\begin{theorem}\label{prop:suff}
Suppose that $\beta< \infty$ and Assumptions \ref{ass:regularity}, \ref{ass:suff} are satisfied. Then the following~hold.

\begin{enumerate}[label=\rm{(\roman{*})}]
\item \label{prop:suff.I} In Cases {\rm (I)} and {\rm (II)}, Assumptions \ref{ass:Schur} and \ref{ass:abs} are satisfied. Hence, for every closed symmetric extension $\cA$ of $\cA_0$ defined in \eqref{A02},  
\begin{equation}
\sesssing(\cA) \setminus \Lambda_\beta(D) = \emptyset, \qquad \text{i.e. \ \ }
\sess(\cA) \setminus \Lambda_\beta(D)
= \sessreg(\cA) \, \setminus \Lambda_\beta(D).
\end{equation}
\item \label{prop:suff.III} In Case {\rm (III)}, if additionally Assumption \ref{ass:pres.reg} holds and, for every $\la \in \RR \setminus \big( \sessreg(\cA) \cup \Lambda_\beta(D) \big)$, the following limits exist and satisfy
\begin{align}\label{suff.lim.pi.r.kappa}
\hspace{2mm}
\pi_2(\la) \!:=\!\frac 12 \lim_{t\nearrow\beta}\frac{\partial^2}{\partial t^2}\tpi(t,\la) \in \RR \!\setminus\! \{0\}, 
\ \ 
\ds r_1(\la) \!:=\! \lim_{t\nearrow\beta}\frac{\partial}{\partial t}r(t,\la) \in \RR, 
\ \ 
\varkappa_0(\la) \!:=\! \lim_{t\nearrow\beta}\varkappa(t,\la) \in \RR,
\hspace{-8mm}
\end{align}
then Assumptions \ref{ass:Schur}, \ref{ass:pres.pi}--\ref{ass:pres.lim} are satisfied. Hence, for every closed symmetric extension $\cA$ of~$\cA_0$ defined in \eqref{A02}, 
\[
\sess(\cA) \setminus \Lambda_\beta(D)
= \big( \sessreg (\cA)\cup \sesssing (\cA)   \big) \, \setminus \Lambda_\beta(D)
\]
where
\begin{equation}\label{sing_part_suff}
\sesssing ( \cA ) \setminus\Lambda_{\beta}(D) = \left\{ 
\la \in \RR \setminus \big( \sessreg(\cA) \cup \Lambda_\beta(D) \big) :  
r_1(\la)^2 - \varkappa_0(\la)\pi_2(\la)  \geq \frac 14 \pi_2(\la)^2
\right\}.
\end{equation} 
\end{enumerate}
\end{theorem}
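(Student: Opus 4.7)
The plan is to verify the hypotheses of Theorem~\ref{thm:ess.abs} (for part~\ref{prop:suff.I}) and of Theorem~\ref{thm:ess.pres} (for part~\ref{prop:suff.III}) in each of the three cases of \eqref{Cases}. Fix $\la \in \RR \setminus (\sessreg(\cA) \cup \Lambda_\beta(D))$; by Lemma~\ref{lem:tlam.1} one may work on an interval $\citvla$ on which $(D-\la)^{-1}$ is bounded and $\tpi(\cdot,\la)$ has constant sign $s_\pi$. Combining the expansion \eqref{pi_asymp} with \eqref{ass:S.Rb} produces the matching two-sided asymptotic bounds
\[
|\tpi(t,\la)| \asymp 1, \quad \beta-t, \quad (\beta-t)^2 \qquad \text{in Cases (I), (II), (III) respectively, as } t\nearrow\beta,
\]
and these drive the entire argument.

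For part~\ref{prop:suff.I}, the key idea is to select a positive weight $\eta(t) = (\beta-t)^{2\theta}$ with an exponent $0<\theta<\tfrac 12$ suitably tuned to the case (a logarithmic correction may be added in Case~(II)) so that the correction $-\tfrac{1}{2\sqrt\eta}(\tpi\eta'/\sqrt\eta)'$ appearing in $V_\eta$ generates a \emph{positive} singular term of order $(\beta-t)^{-2}$ near $\beta$, strong enough to dominate the possibly divergent contribution $-s_\pi r^2/\tpi$. Then \ref{ass:Schur.1} follows at once from the lower bound on $|\tpi|$ and the companion bound in \eqref{ass:S.r} on $\|(D-\la)^{-1}b\|^2$; \ref{ass:Schur.2} is obtained by expanding the norm term in \eqref{Veta.below}, splitting off $\|(D-\la)^{-1}c\|^2$, and choosing $k_2\le \varepsilon/2$ so that the $\varepsilon$-inequality in \eqref{ass:S.kappac} absorbs this contribution through $s_\pi\tka$. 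Finally, \ref{ass:C1} holds because the choice $\theta<\tfrac 12$ makes $1/(\eta|\tpi|)\in L^1(\wt t_\la,\beta)$, which yields $\eta h(\cdot,\la)^2 = \O(\beta-t) \in L^1(\wt t_\la,\beta)$. Theorem~\ref{thm:ess.abs} then delivers the conclusion.

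For part~\ref{prop:suff.III}, Assumption~\ref{ass:pres.reg} is in force, and the $C^2$ regularity combined with the limits \eqref{suff.lim.pi.r.kappa} and Taylor's theorem (applied via iterated integration of $\partial_t^2\tpi$ and $\partial_t r$ from $t$ to $\beta$, using $\tpi_0(\la)=\tpi_1(\la)=0$ and $r(\beta,\la)=0$) gives
\[
\tpi(t,\la) = \pi_2(\la)(t-\beta)^2 + \o((\beta-t)^2), \qquad r(t,\la) = r_1(\la)(t-\beta) + \o(\beta-t)
\]
as $t\nearrow\beta$. Consequently $\wt\pi(\cdot,\la)\to \pi_2(\la)\ne 0$, verifying \ref{ass:pres.pi}, and the limits in \ref{ass:pres.lim} compute directly to $\wt r_\beta(\la)=-r_1(\la)/\pi_2(\la)$ and $\wt\varkappa_\beta(\la)=\varkappa_0(\la)/\pi_2(\la)$. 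Assumption~\ref{ass:Schur} is verified with $\eta\equiv 1$: \ref{ass:Schur.1} is immediate since both $|\tpi|$ and $\|(D-\la)^{-1}b\|^2$ are of order $(\beta-t)^2$ with $\pi_2(\la)\ne 0$, while \ref{ass:Schur.2} holds because the Case~(III) form of \eqref{ass:S.r} and \eqref{ass:S.kappac} forces $r^2/\tpi$, $\tka$, and the norm term in \eqref{Veta.below} all to be bounded. Theorem~\ref{thm:ess.pres} then gives $\la\in\sesssing(\cA)\setminus\Lambda_\beta(D)$ iff
\[
\Dis_\beta(\la) = \frac{r_1(\la)^2}{\pi_2(\la)^2} - \frac{\varkappa_0(\la)}{\pi_2(\la)} - \frac 14 \ge 0,
\]
which, upon multiplication by $\pi_2(\la)^2 > 0$, is exactly \eqref{sing_part_suff}.

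The main obstacle is the design of the weight $\eta$ in Cases~(I) and~(II): it must be chosen so that $\eta|\tpi|$ stays uniformly bounded above and below, the induced correction term in $V_\eta$ produces a positive singular potential of exactly the right order to absorb $-s_\pi r^2/\tpi$, and simultaneously $\eta h^2$ remains integrable on $(\wt t_\la,\beta)$; these three requirements pull in opposing directions and pin down the admissible range of $\theta$. The $\varepsilon$-inequality built into \eqref{ass:S.kappac} is what renders the $\|(D-\la)^{-1}c\|^2$ contribution harmless once $k_2$ is taken small. Case~(III) is technically easier once Taylor's theorem is applied, the crucial point being that the assumption $\pi_2(\la)\ne 0$ is precisely what makes $\wt\pi$ both bounded and bounded away from zero, thereby unlocking Theorem~\ref{thm:ess.pres}.
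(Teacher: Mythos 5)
Your overall strategy coincides with the paper's: verify Assumptions \ref{ass:Schur} and \ref{ass:abs} (resp.\ \ref{ass:Schur}, \ref{ass:pres.pi}--\ref{ass:pres.lim}) from the asymptotic conditions in Assumption \ref{ass:suff} and then invoke Theorem~\ref{thm:ess.abs} (resp.\ Theorem~\ref{thm:ess.pres}). Part \ref{prop:suff.III} and Case (I) of part \ref{prop:suff.I} are essentially correct: in Case (I) your weight $\eta(t)=(\beta-t)^{2\theta}$ with $0<\theta<\frac12$ works just as well as the paper's choice $\eta(t)=\beta-t$ (the paper lands in the second branch of \eqref{h.def} and gets $\eta h^2\asymp(\beta-t)|\log(\beta-t)|$, you land in the first branch and get $\eta h^2\asymp \beta-t$; both are integrable), and your Case (III) computation with $\eta\equiv1$ reproduces the paper's.

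The gap is Case (II). With $\eta(t)=(\beta-t)^{2\theta}$, $\theta>0$, one has $\frac{\eta'}{2\eta}=-\frac{\theta}{\beta-t}$ and $\frac{\eta''}{2\eta}=\frac{\theta(2\theta-1)}{(\beta-t)^2}$, so the two singular terms produced by the weight in the lower bound \eqref{Veta.suff}, namely $s_\pi\bigl(k_4(\frac{\eta'}{2\eta})^2-\frac{\eta''}{2\eta}\bigr)\tpi$ and $-s_\pi\frac{\eta'}{2\eta}\frac{\partial}{\partial t}\tpi$, are each of order $\frac{\theta}{\beta-t}$ (since $\tpi(t,\la)\sim-\tpi_1(\la)(\beta-t)$ and $\frac{\partial}{\partial t}\tpi(t,\la)\to\tpi_1(\la)$) and they cancel at this order; the surviving contribution is $-(2-k_4)\,\theta^2|\tpi_1(\la)|/(\beta-t)$, which is negative and unbounded below, so \ref{ass:Schur.2} fails for \emph{every} $\theta\in(0,\tfrac12)$. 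Your claim that the correction yields a positive term of order $(\beta-t)^{-2}$ is only true in Case (I), where $\tpi$ is bounded away from zero. The fix is not a ``logarithmic correction'' added to the power but the purely logarithmic weight $\eta(t)=-\log(C(\beta-t))$: there the two terms of order $\frac{1}{(\beta-t)\log(C(\beta-t))}$ cancel exactly and the survivor is the positive term $\frac{k_4}{4}\frac{|\tpi_1(\la)|}{(\beta-t)\log^2(C(\beta-t))}$, whose order is precisely what the Case (II) hypotheses on $r$, $\varkappa$, and $\|(D-\la)^{-1}b\|^2$ in \eqref{ass:S.r}--\eqref{ass:S.kappac} are calibrated against; this sign-sensitive cancellation is the only genuinely delicate point of the proof and is not captured by your sketch. (A further small slip: with either the power or the log weight in Case (II) one has $1/(\eta\tpi(\cdot,\la))\notin L^1$ near $\beta$, so \ref{ass:C1} must be verified via the second branch of \eqref{h.def}; your assertion that $\theta<\tfrac12$ yields $1/(\eta|\tpi(\cdot,\la)|)\in L^1$ is false in Case (II), although the integrability of $\eta h(\cdot,\la)^2$ still holds.)
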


\begin{proof} \ref{prop:suff.I} \
The asymptotic behaviour of $\|(D-\la)^{-1}b\|_{\CC^n}^2$, see \eqref{ass:S.r}, implies that \eqref{pi.B1a} holds with some constant $k_1>0$, i.e.\   Assumption \ref{ass:Schur.1} is satisfied. 

Below, we verify that Assumption \ref{ass:Schur.2} is satisfied. 
To this end, let $\eta \in C^2(\citvla)$ be positive.
The triangle inequality, condition \eqref{pi.B1a}, and the Cauchy-Schwartz inequality yield
\begin{equation}\label{Veta.suff0}
\begin{aligned}
\bigg\| 
(D\!-\!\la)^{-1}\Bigl[\Bigl(\ii\frac{r(\cdot,\la)}{\tpi(\cdot,\la)}+\frac{\eta'}{2\eta}\Bigr)b+c\Bigr] \bigg\|^2_{\CC^n} 
\!\!&\! \leq 
\Bigg[
\Bigl(\Bigl|\frac{r(\cdot,\la)}{\tpi(\cdot,\la)}\Bigr| \!+\! \Bigl| \frac{\eta'}{2\eta} \Bigr| \Bigr)\|(D\!-\!\la)^{-1}b\|_{\CC^n} \!+\! \|(D\!-\!\la)^{-1}c\|_{\CC^n}
\Bigg]^2
\\
& \leq 
\Bigg[
\Bigl(\Bigl|\frac{r(\cdot,\la)}{\tpi(\cdot,\la)}\Bigr| + \Bigl| \frac{\eta'}{2\eta} \Bigr| \Bigr) \sqrt{\frac{|\tpi(\cdot,\la)|}{k_1}} + \|(D-\la)^{-1}c\|_{\CC^n}
\Bigg]^2
\\
&\leq 
\frac{3}{k_1}\frac{r(\cdot,\la)^2}{|\tpi(\cdot,\la)|} 
+ \frac{3}{k_1} \Bigl( \frac{\eta'}{2\eta} \Bigr)^2 |\tpi(\cdot,\la)|
+ 3 \|(D-\la)^{-1}c\|^2_{\CC^n}.
\end{aligned}
\end{equation}
Using the definition of $V_\eta$ in \eqref{V_eta} and the estimate \eqref{Veta.suff0} above, we obtain
\begin{equation}\label{Veta.suff}
\begin{aligned}
& V_{\eta}(\cdot,\la) 
- k_2 \bigg\|
(D-\la)^{-1}\Bigl[\Bigl(\ii\frac{r(\cdot,\la)}{\tpi(\cdot,\la)}+\frac{\eta'} {2\eta}\Bigr) b + c\Bigr]
\bigg\|^2_{\CC^n} 
\\
& \geq 
s_\pi \!
\left( \!
\varkappa(\cdot,\la)
- k_3 \frac{r(\cdot,\la)^2}{\tpi(\cdot,\la)} 
+ 
\left( \! k_4 \!
\left( \frac{\eta'}{2\eta}\right)^2 \!\!
 - \frac{\eta''}{2\eta}
\right)
\tpi(\cdot,\la) 
- \frac{\eta'}{2 \eta}  \frac{\partial}{\partial t} \tpi(\cdot,\la) 
- \frac{3 k_2}{s_\pi} \|(D-\la)^{-1}c\|^2_{\CC^n} \!
\right)
\end{aligned}
\end{equation}
where $k_3:= 1+\dfrac{3k_2}{k_1}$, $k_4: = 1 - \dfrac{3k_2}{k_1}$ and we choose $k_2>0$ such that $k_2<\min\Bigl\{\dfrac{\varepsilon}{3}, \dfrac{k_1}{3}\Bigr\}$.

In Case (I), we choose $\eta(t) := \beta -t$, $t\in[\alpha,\beta)$. Then the right-hand side of \eqref{Veta.suff} becomes
\begin{equation}
		\begin{aligned}
		& s_\pi \!
		\left( \! \varkappa(t,\la)
		\!+\! \frac{1}{2(\beta-t)} \frac{\partial}{\partial t} \tpi(t,\la)   
		\!-\! k_3 \frac{r(t,\la)^2}{\pi(t,\la)} 
		\!+\! \frac{k_4}{4 (\beta-t)^2} \pi(t,\la) 
		\!-\! \frac{3 k_2}{s_\pi} \|(D(t)-\la)^{-1}c(t)\|^2_{\CC^n}
		\right)
		\\
		&\geq
		\frac{|\pi_0(\la)|}{(\beta-t)^2}
		\left( 
		\frac{k_4}{4} \frac{\pi(t,\la)}{\pi_0(\la)}
		+ \frac{\beta-t}{2 \pi_0(\la)} \frac{\partial}{\partial t} \tpi(t,\la)   
		- k_3 \frac{r(t,\la)^2 (\beta-t)^2}{|\pi(t,\la)| |\pi_0(\la)|} 
		\right.
		\\
		& \qquad \qquad \qquad \qquad 
		\left.
		+ \frac{(\beta-t)^2}{|\pi_0(\la)|}
		\Big(
		s_\pi \varkappa(t,\la) - \varepsilon \|(D(t)-\la)^{-1}c(t)\|^2_{\CC^n}
		\Big)
		\right)
		.
		\end{aligned}
		\end{equation}
Now Assumption \ref{ass:suff}, i.e.\ the asymptotic conditions \eqref{ass:S.Rb}--\eqref{ass:S.kappac}, yield that
\begin{equation}\label{Veta.suff.I}
\begin{aligned}
V_{\eta}(\cdot,\la) 
- k_2 \bigg\|
(D-\la)^{-1}\Bigl[\Bigl(\ii\frac{r(\cdot,\la)}{\tpi(\cdot,\la)}+\frac{\eta'} {2\eta}\Bigr)b+c\Bigr]
\bigg\|^2_{\CC^n} 
\geq 
\frac{k_4}{4} \frac{|\pi_0(\la)|}{(\beta-t)^2}
\left(1  + \o(1) 
\right), \quad  t\nearrow\beta, 
\end{aligned}
\end{equation}
thus Assumption \ref{ass:Schur.2} is satisfied.

In Case (II), we choose $\eta(t):= - \log (C (\beta-t))$, $t\in[\alpha,\beta)$, with $C^{-1}:=2 (\beta-\alpha)$ so that $\eta$ is positive on $\citv$. Then the right-hand side of \eqref{Veta.suff} becomes
\begin{equation}
		\begin{aligned}
		\hspace{-6mm} & \hspace{-6mm} s_\pi 
		\Bigg( 
		\frac{k_4}{4} \frac{\pi(t,\la)}{(\beta-t)^2 \log^2(C(\beta-t))} 
		\left(
		1 + \frac{2}{k_4}\frac{(\pi(t,\la) 
			+ (\beta-t)\frac{\partial}{\partial t} \tpi(t,\la))\log(C(\beta-t))}{\pi(t,\la)}
		\right) 
		\\
		\hspace{-6mm} & 
		- k_3 \frac{r(t,\la)^2}{\pi(t,\la)} 
		+ \varkappa(t,\la) - \frac{3 k_2}{s_\pi} \|(D(t)-\la)^{-1}c(t)\|^2_{\CC^n}
		\Bigg)
		\\
		\hspace{-6mm}  & \hspace{-6mm} \geq 
		\frac{|\pi_1(\la)|}{(\beta-t) \log^2(C(\beta-t))}
		\Bigg( 
		\frac{k_4}{4} \frac{\pi(t,\la)}{\pi_1(\la)(t-\beta)}
		\left(
		1 + \frac{2}{k_4}\frac{(\pi(t,\la) 
			+ (\beta-t)\frac{\partial}{\partial t} \tpi(t,\la)) \log(C(\beta-t))}{\pi(t,\la)}
		\right)
		\\ 
		\hspace{-6mm} & \qquad \qquad \qquad \qquad \qquad 
		- k_3 \frac{r(t,\la)^2 (\beta-t) \log^2(C(\beta-t)) }{|\pi(t,\la)| |\pi_1(\la)|} 
		\\
		&  \qquad \qquad \qquad \qquad \qquad  
		+
		\frac{(\beta-t)\log^2(C(\beta-t))}{|\pi_1(\la)|}
		\Big( s_\pi\varkappa(t,\la) - \varepsilon\|(D(t)-\la)^{-1}c(t)\|^2_{\CC^n}
		\Big)
		\Bigg) 
		.
		\end{aligned}
		\end{equation}
Now Assumption \ref{ass:suff}, i.e.\ the asymptotic conditions  \eqref{ass:S.Rb}--\eqref{ass:S.kappac}, yield that
\begin{equation}\label{Veta.suff.II}
\begin{aligned}
V_{\eta}(\cdot,\la) 
- k_2 \bigg\|
(D-\la)^{-1}\Bigl[\Bigl(\ii\frac{r(\cdot,\la)}{\tpi(\cdot,\la)}+\frac{\eta'} {2\eta}\Bigr)b+c\Bigr]
\bigg\|^2_{\CC^n} 
\geq 
\frac{k_4}{4} \frac{|\pi_1(\la)|}{(\beta-t)\log^2(C(\beta-t))}
\left(1  + \o(1) 
\right), \quad  t\nearrow\beta, 
\end{aligned}
\end{equation}
thus Assumption \ref{ass:Schur.2} is satisfied.

It remains to verify Assumption \ref{ass:abs}, i.e.\ that either \ref{ass:C1} or \ref{ass:molchanov} hold. 
Note that the choices $\eta(t) := \beta -t$, $t\in [\alpha,\beta)$, in Cases (I) and $\eta(t):= - \log (C (\beta-t))$, $t\in [\alpha,\beta)$, in Case (II), respectively, lead to
\begin{align}\label{eta_pi}
\eta(t)\tpi(t,\la)=
\begin{cases}
\tpi_0(\la)(\beta-t)(1 + \o(1)) & \text{in Case (I)},
\\
\tpi_1(\la)(\beta-t) \log (C (\beta-t))(1 + \o(1)) & \text{in Case (II)},
\end{cases}
\quad  t\nearrow\beta.
\end{align}
Hence $\frac 1{\eta\tpi(\cdot,\la)} \notin  L^1( \itvla)$ in both Case (I) and (II). Then we are in the second case of \eqref{h.def}.
It is not difficult to see that $\eta(t)h(t,\la^2 = \o((\beta-t)^{-\delta})$ for arbitrary $\delta>0$ as $ t \nearrow \beta$, thus $ \eta h(\cdot,\la)^2 \in L^1( \itvla)$ and hence \ref{ass:C1} holds.

\smallskip
\ref{prop:suff.III} \ In Case (III), we first note that Assumption \ref{ass:pres.reg} implies $\pi(\cdot,\la) \in C^2(\citv,\RR)$. As $\pi_0(\la) = \pi_1(\la)=0$ and $r(t,\la)=\O(\beta-t)$, $t\nearrow\beta$, the existence of the limits in \eqref{suff.lim.pi.r.kappa} and L'H\^opital's rule yield that the following limits exist and satisfy
\begin{align*}
\lim_{t\nearrow\beta}\frac{\tpi(t,\la)}{(\beta-t)^2} &=
\frac 12 \lim_{t\nearrow\beta}\frac{\partial^2}{\partial t^2}\tpi(t,\la) = \pi_2(\la), 
\qquad
\lim_{t\nearrow\beta}(\beta-t)\frac{\frac{\partial}{\partial t}\tpi(t,\la)}{\tpi(t,\la)} 
= 
\frac{1}{\pi_2(\la)} \lim_{t\nearrow\beta}\frac{\frac{\partial}{\partial t}\tpi(t,\la)}{\beta-t} 
= 
-1,
\\
\lim_{t\nearrow\beta}\frac{r(t,\la)}{t - \beta} &=
\lim_{t\nearrow\beta}\frac{\partial}{\partial t}r(t,\la) = r_1(\la).
\end{align*}
Since $\pi_2(\la) \ne 0$ by assumption \eqref{suff.lim.pi.r.kappa}, Assumptions \ref{ass:pres.pi}--\ref{ass:pres.lim} in the case $\beta< \infty$ considered here are satisfied. Moreover, due to the asymptotic behaviour of $\|(D-\la)^{-1}b\|_{\CC^n}^2$, see \eqref{ass:S.r}, and $\pi_2(\la)\neq 0$, it follows that Assumption \ref{ass:Schur.1} is satisfied as well.

To verify Assumption \ref{ass:Schur.2}, we proceed as in \eqref{Veta.suff0}--\eqref{Veta.suff} choosing $\eta(t):= 1$, $t\in[\alpha,\beta)$. The asymptotic conditions \eqref{ass:S.Rb}--\eqref{ass:S.kappac} and $\pi_2(\la) \neq 0$ yield that
\begin{equation}\label{Veta.suff.III}
\begin{aligned}
V_{\eta}(\cdot,\la) 
- k_2 \bigg\|
(D-\la)^{-1}\Bigl[ \ii\frac{r(\cdot,\la)}{\tpi(\cdot,\la)} b+c\Bigr]
\bigg\|^2_{\CC^n} 
\end{aligned}
\end{equation}
is bounded from below on some left-neighbourhood of $\beta$. Using the relations \eqref{suff.lim.pi.r.kappa}, we obtain
\begin{align*}
\wt{r}_{\beta}(\la) &= \lim_{t\nearrow\beta}(\beta-t)\frac{r(t,\la)}{\tpi(t,\la)} 
= - \lim_{t\nearrow\beta} \frac{r(t,\la)}{t-\beta} \lim_{t\nearrow\beta}\frac{(\beta-t)^2}{\tpi(t,\la)} = -\frac{r_1(\la)}{\tpi_2(\la)},
\\
\wt{\varkappa}(\la) &= \lim_{t\nearrow\beta}(\beta-t)^2\frac{\varkappa(t,\la)}{\tpi(t,\la)} 
= 
\lim_{t\nearrow\beta}\varkappa(t,\la)\lim_{t\nearrow\beta}\frac{(\beta-t)^2}{\tpi(t,\la)} = \frac{\varkappa_0(\la)}{\tpi_2(\la)} .
\end{align*}
Therefore, by definition \eqref{Discriminant},
\[
\Dis_{\beta}(\la) = \left(
\frac{r_1(\la)}{\pi_2(\la)}
\right)^2
-
\frac{\varkappa_0(\la)}{\pi_2(\la)}
-\frac{1}{4}
\]
Multiplying the inequality $\Dis_\beta(\la) \ge 0$ by $\tpi_2(\la)^2>0$,
Theorem \ref{thm:ess.pres} yields the desired description of $\sesssing ( \cA ) \setminus\Lambda_{\beta}(D)$ in 
\eqref{sing_part_suff}. 
\end{proof}

\begin{remark}\label{rem:SII}
In Case (II), the claim of Theorem \ref{prop:suff} \ref{prop:suff.I} continues to hold if, instead of the asymptotic conditions in Assumption \ref{ass:suff}, the asymptotic conditions in the following Assumption \ref{ass:suff'} hold.

\setcounter{assu}{18}
\begin{assu} \label{ass:suff'}
Suppose that, in Case (II), for every $\la \in \RR \setminus \big( \sessreg(\cA) \cup \Lambda_\beta(D) \big)$ and $t\nearrow\beta$,
		\begin{align}
		\label{rem:SII.Rb}
		& \mathcal{R}(t,\la)  = \o(\beta-t), \\
		\label{rem:SII.D}
		& \|(D(t)-\la)^{-1}b(t)\|^2_{\CC^n}  = \O(\beta-t), 
		\\ 
		\label{rem:SII.r}
		& r(t,\la) = \O((\beta-t)^{1/2}), 
		\\
		\label{rem:SII.kappac}
		& \exists \ \varepsilon>0: \ \left( s_\pi \varkappa(t,\la) - \varepsilon \|(D(t)-\la)^{-1}c(t)\|^2_{\CC^n} 
		\right)_-  = \O(1); 
		\end{align}
		here, again, $s_\pi=\sgn(\tpi(\cdot,\la)\!\!\restriction\!_{\citvla})$ is as in \eqref{spi.def} and $f_-:=(f-|f|)/2$ is the negative part of a real-valued function~$f$.
\end{assu}
\end{remark}
\begin{proof}
The assumption on the behaviour of $\|(D-\la)^{-1}b\|^2_{\CC^n}$ is the same as in Assumption \ref{ass:suff}, thus Assumption \ref{ass:Schur.1} is satisfied with some constant $k_1>0$, see the proof of Theorem \ref{prop:suff}. 

Choosing $\eta(t):=1$ and using \eqref{rem:SII.Rb}--\eqref{rem:SII.kappac}, we obtain from \eqref{Veta.suff} that
\[
\begin{aligned}
V_{\eta}(\cdot,\la) 
- k_2 \bigg\|
(D-\la)^{-1}\Bigl[ \ii\frac{r(\cdot,\la)}{\tpi(\cdot,\la)} b+c\Bigr]
\bigg\|^2_{\CC^n} 
\end{aligned}
\]
is bounded from below on some left-neighbourhood of $\beta$ and thus Assumption \ref{ass:Schur.2} is satisfied. 

Moreover, $\eta(t)\tpi(t,\la)= - \tpi_1(\la)(\beta-t)(1 + \o(1))$, $t\nearrow\beta$, so Assumption \ref{ass:C1} is satisfied by the same arguments as in the proof of Theorem \ref{prop:suff}.
\end{proof}
%

\section{The structure of the singular part of the essential spectrum}
\label{sec:str.of.sing.part}

In this section, we analyse the topological structure of the essential spectrum. We start with a simple observation on the regular part of the essential spectrum.

\begin{proposition}
\label{prop:reg.part.structure}
The regular part $\sessreg(\cA)=\overline{\Lambda_{\citv}(\Delta)}$ of the essential spectrum is the union of at most $n$ closed intervals in $\RR$ which are the closures of the ranges of the eigenvalues of the matrix function $\Delta=D-\frac{1}{p}bb^*$ in \eqref{defDelta}. Moreover, 
\begin{align}
\label{ineq:reg.1}
\inf\sessreg(\cA) \leq \inf \Lambda_{\beta}(D).
\end{align}
\end{proposition}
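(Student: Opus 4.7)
The plan is to exploit that $\Delta(t) = D(t) - p(t)^{-1} b(t) b(t)^*$ is a continuous Hermitian matrix-valued function on the connected interval $\citv$ whose eigenvalues can be enumerated as continuous real functions, combined with the pointwise Loewner ordering $\Delta(t) \preceq D(t)$ coming from $p(t)^{-1} b(t) b(t)^* \succeq 0$.

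First I would establish the structural claim. By Assumption~\ref{ass:regularity}, $\Delta \in C^1(\citv, \CC^{n\times n})$ and $\Delta(t)^* = \Delta(t)$ for all $t \in \citv$, so its eigenvalues can be enumerated in increasing order as continuous real-valued functions
\[
\mu_1(t) \le \mu_2(t) \le \cdots \le \mu_n(t), \quad t \in \citv,
\]
by the standard continuity of the ordered eigenvalues of Hermitian matrices (see e.g.\ \cite[Section~II.5]{Kato}). Since $\citv$ is connected, each range $\mu_k(\citv)$ is an interval in $\RR$. Hence $\Lambda_{\citv}(\Delta) = \bigcup_{k=1}^{n} \mu_k(\citv)$ is a union of at most $n$ intervals, and taking closures (merging overlaps if necessary) yields at most $n$ closed intervals in $\RR$. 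Combined with Proposition~\ref{prop:regpart}, this gives the first statement.

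For the inequality, I would use that $p(t)^{-1} b(t) b(t)^*$ is positive semi-definite (of rank at most one, since $p>0$) for every $t \in \citv$, so $\Delta(t) \preceq D(t)$ in the Loewner order; by the min-max principle the smallest eigenvalue $\mu_1(t)$ of $\Delta(t)$ satisfies $\mu_1(t) \le \nu$ for every $\nu \in \sigma(D(t))$. If $\Lambda_\beta(D) = \emptyset$, the inequality is vacuous since $\inf \Lambda_\beta(D) = +\infty$. Otherwise, fix an arbitrary $\nu_\ast \in \Lambda_\beta(D)$; by the characterization of $\Lambda_\beta(D)$ for Hermitian $D$ given just after \eqref{Lam.beta.def}, there exist $t_k \nearrow \beta$ and $\nu_k \in \sigma(D(t_k))$ with $\nu_k \to \nu_\ast$. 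Then $\mu_1(t_k) \le \nu_k$ and $\mu_1(t_k) \in \Lambda_{\citv}(\Delta) \subset \sessreg(\cA)$. Passing to a subsequence, either $\mu_1(t_k) \to -\infty$, whence $\inf \sessreg(\cA) = -\infty \le \nu_\ast$ trivially, or $\mu_1(t_k)$ has a finite limit $\mu_\ast \le \nu_\ast$, which by closedness lies in $\overline{\Lambda_{\citv}(\Delta)} = \sessreg(\cA)$, so that again $\inf \sessreg(\cA) \le \nu_\ast$. Taking the infimum over $\nu_\ast \in \Lambda_\beta(D)$ yields \eqref{ineq:reg.1}.

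I do not anticipate a real obstacle here; the only delicate point is the continuous enumeration of the eigenvalues of the Hermitian matrix $\Delta(t)$, which is classical and does not require analyticity of the eigenvalue curves. The Loewner comparison then transfers information from $\sigma(D(t))$ near $\beta$ down to the smallest eigenvalue curve of $\Delta$, yielding the required inequality in a straightforward manner.
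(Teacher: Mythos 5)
Your proposal is correct and follows essentially the same route as the paper: the first claim via continuity of the ordered eigenvalue curves of the continuous Hermitian family $\Delta$ on the connected interval $\citv$, and the inequality via the Loewner ordering $\Delta(t)\leq D(t)$ (since $p>0$) together with the min-max comparison of smallest eigenvalues. The only cosmetic difference is that you extract sequences $t_k\nearrow\beta$ to pass from $\sigma(D(t))$ to $\Lambda_\beta(D)$, whereas the paper invokes the inclusion $\Lambda_\beta(D)\subset\overline{\Lambda_{\citv}(D)}$ directly.
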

\begin{proof}
The first claim is obvious from the definition of the regular part of the essential spectrum since $\Delta$ is a Hermitian matrix-valued function which is continuous on $[\alpha,\beta)$.
To prove the second claim, note that, for every $t\in[\alpha,\beta)$, we have 
$\Delta(t) = (D-\frac{1}{p}bb^*) (t) \leq D(t)$  in the sense of partial operator ordering since $p>0$ and hence
\begin{align*}
\min \spt(\Delta(t)) = \underset{\|x\|_{\CC^n}=1}{\min} \bigl(\Delta(t)x,x\bigr)_{\CC^n} &= \underset{\|x\|_{\CC^n}=1}{\min} \Big(\bigl(D(t)x,x\bigr)_{\CC^n} - \frac{1}{p(t)}\|b(t)^*x\|^2_{\CC^n}\Big) \\
                        &\leq \underset{\|x\|_{\CC^n}=1}{\min}\bigl( D(t)x,x\bigr)_{\CC^n} = \min\spt(D(t)).
\end{align*}
Hence, by \eqref{Lam.J.def}, we obtain
\begin{align*}
\inf\sessreg(\cA) = \inf\ov{\Lambda_{[\alpha, \beta)}(\Delta)} = \underset{t \in [\alpha,\beta)}{\inf}\bigl(\min\spt(\Delta(t))\bigr) \leq \underset{t \in [\alpha,\beta)}{\inf}\bigl(\min\spt(D(t))\bigr) = \inf\ov{\Lambda_{[\alpha, \beta)}(D)} \leq \inf\Lambda_{\beta}(D)
\end{align*}
where we have used $\Lambda_{\beta}(D) \subset \ov{\bigcup_{t\in[\alpha,\beta)} \spt(D(t))}$ in the last step.
\end{proof}

In the sequel, we continue with the analysis of the singular part of the essential spectrum. Here we use that the leading coefficient  
of the first Schur complement has the property that $\la \mapsto -\tpi(t,\la)$ is a Nevanlinna function of $\la\in\CC$ for all sufficiently large $t$. 

The class of Nevanlinna functions consist of those complex functions that are analytic on the open upper half-plane and have nonnegative imaginary part therein (cf.\ \cite{KK1}). 
It is well-known that a function $f$ is a Nevanlinna function if and only if it admits a canonical integral representation of the form (cf.\ \cite{KK1}) 
\begin{align}
\label{nev}
f(\zeta)=\omega_1+\omega_2 \zeta+\int_{\RR}\Bigl(\frac{1}{\nu-\zeta}-\frac{\nu}{\nu^2+1}\Bigr)\text{d}\sigma(\nu), \quad \zeta \in \CC \setminus {\rm supp\,} \sigma,
\end{align}
with $\omega_1, \omega_2 \in \RR$, $\omega_1\geq 0$, and a positive Borel measure $\sigma$ on $\RR$ such that 
\begin{align*}
\int_{\RR}\frac{\text{d}\sigma(\nu)}{1+\nu^2}< \infty;
\end{align*}
moreover, this representation is unique.
If $f$ is a rational Nevanlinna function, then the corresponding measure $\sigma$ is concentrated at the real poles $\{\nu_j\}_{j=1}^m$ of $f$ and thus the integral representation \eqref{nev} takes the form
\begin{align}\label{rep.of.Nevanlinna.fn}
f(\zeta)=\omega_1\zeta+\omega_2+\sum_{j=1}^m\frac{\sigma_j}{\nu_j-\zeta}, \quad \zeta\in\CC\setminus \{\nu_j\}_{j=1}^m,
\end{align}
with $\sigma_j>0$, $j=1,2,\ldots,m$.

\smallskip
%
%
The following property of the leading coefficient $\tpi(\cdot,\la)$ of the Schur complement plays a crucial role in the description of the structure of the singular part of the essential spectrum.
\begin{lemma}
\label{pika-nev} 
For every $t\!\in\! [\alpha,\beta)$, the function $\zeta\!\mapsto\! -\tpi(t,\zeta)$ is a Nevanlinna function of the complex variable~$\zeta$.
\end{lemma}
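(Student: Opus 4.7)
The plan is to fix $t \in [\alpha,\beta)$ and reduce to the rational case by diagonalizing the Hermitian matrix $D(t)$. Since $D(t)^* = D(t)$ by Assumption \ref{ass:regularity}, there exists a unitary $U \in \CC^{n\times n}$ and real eigenvalues $\la_1(t),\dots,\la_n(t)$ with $D(t) = U\,\mathrm{diag}(\la_1(t),\dots,\la_n(t))\,U^*$. Setting $u \defeq U^* b(t) = (u_1,\dots,u_n)^{\mathrm t} \in \CC^n$, one obtains, for every $\zeta \in \CC \setminus \{\la_1(t),\dots,\la_n(t)\}$,
\begin{equation*}
-\tpi(t,\zeta) \;=\; -p(t) + b(t)^* (D(t)-\zeta)^{-1} b(t) \;=\; -p(t) + \sum_{k=1}^n \frac{|u_k|^2}{\la_k(t) - \zeta}.
\end{equation*}

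Next I would verify the two defining properties of a Nevanlinna function. Analyticity on the open upper half-plane is immediate: the singularities of $\zeta \mapsto -\tpi(t,\zeta)$ are contained in $\{\la_1(t),\dots,\la_n(t)\} \subset \RR$. For the imaginary part, note that for $\la \in \RR$ and $\zeta \in \CC$ with $\Im\zeta > 0$,
\begin{equation*}
\Im \!\left( \frac{1}{\la - \zeta} \right) \;=\; \Im \!\left( \frac{\la - \ov{\zeta}}{|\la-\zeta|^2} \right) \;=\; \frac{\Im \zeta}{|\la - \zeta|^2} \;>\; 0.
\end{equation*}
Since $|u_k|^2 \geq 0$ and $-p(t) \in \RR$, it follows that $\Im(-\tpi(t,\zeta)) \geq 0$ whenever $\Im \zeta > 0$, so $-\tpi(t,\cdot)$ is indeed a Nevanlinna function.

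There is no real obstacle here: the key structural point is that $D(t)$ is Hermitian (so its spectrum is real and its resolvent acts on $b(t)$ as a weighted sum of simple real poles), and the terms $(\la_k(t) - \zeta)^{-1}$ are the prototypical rational Nevanlinna functions with nonnegative weights $|u_k|^2$. The representation above already exhibits $-\tpi(t,\cdot)$ in the canonical form \eqref{rep.of.Nevanlinna.fn} with $\omega_1 = 0$, $\omega_2 = -p(t)$, poles $\nu_k = \la_k(t)$, and weights $\sigma_k = |u_k|^2$ (after collecting coinciding eigenvalues), which could be mentioned as a remark since it will be useful for the Nevanlinna-type analysis of the coefficients $\tpi_0,\tpi_1,\tpi_2$ introduced earlier in the paper.
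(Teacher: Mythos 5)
Your proof is correct and follows essentially the same route as the paper: both arguments use the Hermiticity of $D(t)$ to place all poles of $\zeta\mapsto-\tpi(t,\zeta)$ on the real axis (giving analyticity on $\CC^+$) and then verify $\Im(-\tpi(t,\zeta))\ge 0$ there; the paper computes $\Im(-\tpi(t,\zeta))=\Im(\zeta)\,\|(D(t)-\zeta)^{-1}b(t)\|^2$ directly from the resolvent, while you obtain the same nonnegativity by diagonalizing $D(t)$ and summing the elementary Nevanlinna terms $|u_k|^2/(\la_k(t)-\zeta)$. Your closing remark that this exhibits the representation \eqref{rep.of.Nevanlinna.fn} with weights $|u_k|^2$ is a nice bonus, since it is exactly the form \eqref{rep:-pi} the paper derives afterwards (up to discarding the terms with $|u_k|^2=0$, as the $\sigma_j$ there are taken strictly positive).
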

\begin{proof}
Let $t\geq 0$ be fixed and denote by $\CC^+\!:=\{z\in\CC : \Im(z)>0\}$ the open upper half-plane in $\CC$. Since $D(t)\in\CC^{n\times n}$ is Hermitian, it has finitely many eigenvalues that are all real. Thus it is clear from the representation \eqref{pidelta} that the function $\zeta\mapsto -\pi(t,\zeta)$ is rational with poles exactly at the eigenvalues of $D(t)$, and therefore holomorphic on $\CC^+$. 

It remains to be shown that $\zeta\mapsto -\pi(t,\zeta)$ maps $\CC^+$ into itself. Denoting $\wh{b}(\zeta):=\bigl(D(t)-\zeta\bigr)^{-1}b(t)$, $\zeta\in\CC^+$, we obtain
\begin{align*}
b(t)^*\bigl(D(t)-\zeta\bigr)^{-1}b(t) = \bigl(\bigl(D(t)-\zeta\bigr)\:\wh{b}(\zeta)\bigr)^*\:\wh{b}(\zeta) & = \wh{b}(\zeta)^*D(t)\:\wh{b}(\zeta)-\overline{\zeta}\:\|\wh{b}(\zeta)\|^2_2.
\end{align*}
Since $p$ is real-valued and $D(t)$ is Hermitian, 
\eqref{pi.r.varkap.def} implies that
\begin{align*}
&
\Im(-\pi(t,\zeta)) = 
- \Im\bigl(p(t)-b(t)^*\bigl(D(t)-\zeta\bigr)^{-1}b(t)\bigr)= \Im(\zeta) \,\|\wh{b}(\zeta)\|^2_2 \ge 0, \quad \zeta \in \CC^+.
\qedhere
\end{align*}
\end{proof}

\vspace{2mm}

Since, for every $t\in [t_\la,\beta)$, the eigenvalues of $D(t)$ denoted by $\la_1(t), \la_2(t), \ldots, \la_n(t) \in \RR$ coincide with the poles of the rational function $\la\mapsto-\pi(t,\la)$, Lemma~\ref{pika-nev} and \eqref{rep.of.Nevanlinna.fn} yield that
\begin{align}
\label{rep:-pi}
-\pi(t,\la) &= -p(t)+\sum_{j=1}^n\frac{\sigma_j(t)}{\la_j(t)-\la}, \quad \la\in \CC\setminus \{\la_j(t)\}_{j=1}^m,
\end{align}
where $\sigma_j$,
$j=1,2,\ldots,n$, are positive functions of $t\in [t_\la,\beta)$.
%
%
\begin{lemma}
\label{nice lemma1}
Let $b_j \in C^1([\alpha,\beta))$, $j=1,2,\ldots,n$, be the coefficients of $\cA_0$ in \eqref{A02}. Then the functions $\sigma_j$, $j=1,2,\ldots, n$, in \eqref{rep:-pi} 
\vspace*{-1mm} satisfy
\begin{align*}
\sum_{j=1}^n\sigma_j(t)=\sum_{j=1}^n|b_j(t)|^2, 
\quad t\in [t_\la,\beta).
\end{align*}
\end{lemma}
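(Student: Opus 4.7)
The plan is to compare the Laurent expansions at $\la = \infty$ of the two expressions for $-\tpi(t,\la)$ and match the coefficient of $1/\la$. Fix $t \in [\alpha,\beta)$. From the definition \eqref{pi.r.varkap.def},
\[
 -\tpi(t,\la) = -p(t) + b(t)^*(D(t)-\la)^{-1}b(t),
\]
and from the Nevanlinna representation \eqref{rep:-pi},
\[
 -\tpi(t,\la) = -p(t) + \sum_{j=1}^n \frac{\sigma_j(t)}{\la_j(t)-\la}.
\]
In particular, the two functions
$b(t)^*(D(t)-\la)^{-1}b(t)$ and $\sum_{j=1}^n \sigma_j(t)/(\la_j(t)-\la)$
coincide on $\CC\setminus\{\la_1(t),\ldots,\la_n(t)\}$.

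The main (and only) step is then the asymptotic expansion as $|\la|\to\infty$. For $|\la|$ larger than $\|D(t)\|$ one has the Neumann series
\[
 (D(t)-\la)^{-1} = -\frac{1}{\la}\sum_{k=0}^{\infty}\Bigl(\frac{D(t)}{\la}\Bigr)^k = -\frac{1}{\la} - \frac{D(t)}{\la^2} + \O(\la^{-3}),
\]
so that
\[
 b(t)^*(D(t)-\la)^{-1}b(t) = -\frac{\|b(t)\|_{\CC^n}^2}{\la} + \O(\la^{-2}), \qquad |\la|\to\infty.
\]
On the other hand, a direct geometric series expansion of each summand yields
\[
 \sum_{j=1}^n \frac{\sigma_j(t)}{\la_j(t)-\la} = -\frac{1}{\la}\sum_{j=1}^n \sigma_j(t) + \O(\la^{-2}), \qquad |\la|\to\infty.
\]
Matching the coefficients of $\la^{-1}$ in these two expansions gives $\sum_{j=1}^n \sigma_j(t) = \|b(t)\|_{\CC^n}^2 = \sum_{j=1}^n |b_j(t)|^2$, which is the claim. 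Equivalently, one can multiply both sides of \eqref{rep:-pi}, respectively the identity for $b(t)^*(D(t)-\la)^{-1}b(t)$, by $-\la$ and pass to the limit $\la\to\infty$ along any ray avoiding $\sigma(D(t))$.

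There is essentially no obstacle here: the Nevanlinna representation of a rational function is unique, so no delicate limit is needed beyond the elementary asymptotics above. The only point worth noting is that, if $D(t)$ has repeated eigenvalues, some of the $\sigma_j(t)$ simply coincide (or vanish) in the indexing of \eqref{rep:-pi}, but the sum over $j=1,\ldots,n$ is unaffected by how one groups the identical poles, since the identity is proved at the level of the full Laurent coefficient.
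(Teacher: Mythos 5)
Your proof is correct, and it reaches the identity by a different mechanism than the paper. The paper also starts from the identity $\sum_{j=1}^n\sigma_j(t)/(\la_j(t)-\la)=b(t)^*(D(t)-\la)^{-1}b(t)$, but then writes both sides as rational functions with common denominator $\det(D(t)-\la)$ — the left side via clearing denominators, the right side via Cramer's rule and the cofactor matrix — and matches the \emph{leading} coefficients $(-1)^{n-1}\sum_j\sigma_j(t)$ and $(-1)^{n-1}\sum_j|b_j(t)|^2$ of the two degree-$(n-1)$ numerator polynomials. You instead match the coefficient of $\la^{-1}$ in the Laurent expansion at infinity, using the Neumann series for the resolvent and the geometric series for each pole term; this is of course the same piece of information (the leading numerator coefficient of a proper rational function is, up to sign, its residue-sum at infinity), but your route avoids the cofactor bookkeeping entirely and is the standard way one reads off the total mass $\int\d\sigma$ of a Nevanlinna function from its behaviour as $\la\to\infty$, so it arguably fits the surrounding Nevanlinna-theoretic discussion more naturally. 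Your closing remark about repeated eigenvalues is a sensible precaution but not needed: the identity is between the two functions of $\la$, and the grouping of coincident poles does not affect the $\la^{-1}$ coefficient, as you note.
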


\begin{proof}
In the sequel, we work with functions and omit the dependence on $t\!\in\! [t_\la,\beta)$.
By \eqref{pi.r.varkap.def},~\eqref{rep:-pi}, 
\begin{align}
\label{identity}
\sum_{j=1}^n\frac{\sigma_j}{\la_j-\la}=b^*(D-\la)^{-1}b.
\end{align}
On the other \vspace*{-1mm} hand, 
\begin{align}
\label{identity-a}
\sum_{j=1}^n\frac{\sigma_j}{\la_j-\la}=\frac{P(\la)}{\det(D-\la)},
\end{align}
where $P$ is a polynomial of degree $n-1$ in $\la$ with leading coefficient 
\begin{align}
\label{lcP}
(-1)^{n-1}(\sigma_1+\ldots+\sigma_n).
\end{align}
Cramer's rule implies \vspace*{-1mm} that
\begin{align}
\label{cofac}
(D-\la)^{-1}=\frac{1}{\det(D-\la)}(M(\cdot,\la))^{\rm t},
\end{align}
where $M(t,\la)$ is the matrix of cofactors for $D(t)-\la$, $t\in [t_\la,\beta)$. Here, the diagonal entries of $M(\cdot,\la)$ are polynomials of degree $n\!-\!1$ in $\la$, while the off-diagonal entries are polynomials of degree at most $n\!-\!2$~in~$\la$. Hence multiplying \eqref{cofac} by $b$ from the right and by $b^*$ from the left, we obtain a rational function whose denominator is 
$\det(D-\la)$ and whose nominator is a polynomial $Q$ of degree $n-1$ in $\la$ with leading coefficient
\begin{align}
\label{lcQ}
(-1)^{n-1}(|b_1|^2+\ldots+|b_n|^2).
\end{align}
Now \eqref{identity}, \eqref{identity-a} imply that $P$ and $Q$ coincide, and hence so do the leading coefficients \eqref{lcP}, \eqref{lcQ}.
\end{proof}

\vspace{2mm}

Under the following assumptions on the eigenvalues of $D(t)$ and the coefficients $\kappa(\cdot,\la)$, $r(\cdot,\la)$ of the Schur complement, we give a description of the topological structure of the singular part of the essential spectrum in Theorem \ref{singstr} below.

%
%
\begin{ass}\label{ass:D.has.lim}
The eigenvalues $\la_j(t)$, $j=1,2,\dots,n$, of $D(t)$ and the coefficients $\tka(\cdot,\la)$, $r(\cdot,\la)$ of the Schur complement defined in \eqref{pi.r.varkap.def} have the following properties.
\hspace{-2mm}
\begin{enumerate}[label={{\upshape(T\arabic{*})}}]
\item \label{ass:D.has.lim.1}
The possibly improper limits $\la_{j,\beta} := \lim_{t\nearrow\beta}\la_{j}(t) \in \RR \cup \{-\infty,\infty\}$ exist and,
for some $j_0 \in \{0,1,\ldots,n\}$, 
\begin{equation}\label{ass:D.has.lim.j_0}
\lim_{t\nearrow\beta}\la_j(t)
=\begin{cases}  \la_{j,\beta}\in\RR, 
& j=1,2,\dots,j_0,
\\ 
-\infty \text{ or } \infty, & j=j_0+1,\ldots,n; 
\end{cases}
\end{equation}
moreover, there exist $m\in \mathbb{N}$ and $K\in\RR$ such that 
\begin{equation}\label{ass:max.min}
\ds\max_{j=j_0+1}^n|\la_j(t)|^2 \leq K \ds\min_{j=j_0+1}^n|\la_j(t)|^m, \quad t\in[t_\la,\beta).
\end{equation}

\item \label{ass:varkappa.holom} 
There are real constants $\phi_{\beta}$, $\psi_{\beta}$, and $\mu_{j,\beta}$, $j=1,2,\ldots,j_0$, such that 
\begin{align}\label{ass:structure.ex.1}
-\varkappa_0(\la) = - \lim_{t\nearrow\beta} \varkappa(t,\la) =
\phi_{\beta} + \psi_{\beta}\la + \ds\sum_{j=1}^{j_0}\frac{\mu_{j,\beta}}{\la_{j,\beta}-\la}. 
\end{align}

\item \label{ass:r.holom} There exists $h_{\beta}\in\RR$ such \vspace{-2mm} that 
\begin{equation}
\label{ass:r_1_is_constant}
r_1(\la) = \lim_{t\nearrow\beta} \frac{\partial}{\partial t}r(t,\la) = h_{\beta}.
\end{equation}
\end{enumerate}
\end{ass}

Notice that \eqref{ass:structure.ex.1} resembles the representation of Nevanlinna functions \eqref{rep.of.Nevanlinna.fn}, however, $\phi_{\beta}$ and $\mu_{j,\beta}$, $j=1,2,\ldots,j_0$, are assumed to be only real. Indeed, $-\varkappa_0$ may not be a Nevanlinna function as it can be seen, for instance, in Example B where $\mu_{1,1}=-\frac{1}{4}\sigma_{1,1}\leq 0$.


The following proposition describes the form of the limit function $\tpi_2(\la)$ in terms of the $j_0$ proper
limits in Assumption \ref{ass:D.has.lim.1} in Case (III) where $\pi_0(\la)=\pi_1(\la)=0$.
%
%
\begin{proposition}
\label{eshmat1}
Suppose that the assumptions of Theorem {\rm \ref{prop:suff}} \ref{prop:suff.III} are satisfied, i.e.\ Assumptions \ref{ass:regularity}, \ref{ass:suff}, \ref{ass:pres.reg} hold, Case {\rm (III)} $\pi_0(\la)=\pi_1(\la)=0$ prevails and the limits in \eqref{suff.lim.pi.r.kappa} exist. 
If $\sigma_j$, $j=1,2,\ldots,n$, are the positive function in \eqref{rep:-pi} and Assumption \ref{ass:D.has.lim.1} is satisfied with $j_0\in\{1,2,\dots,n\}$, 
then $\la\mapsto - \pi_2(\la)$ is a Nevanlinna function, i.e.\ for $\la \in \RR \setminus \big( \sessreg(\cA) \cup \Lambda_\beta(D) \big)$,
\begin{align}
\label{form.of.pi_2}
\ds-\tpi_2(\la) = - \frac 12 \lim_{t\nearrow\beta} \frac{\partial^2}{\partial t^2} \tpi(\cdot,\la) = f_{\beta} + g_{\beta}\la + \sum_{j=1}^{j_0}\frac{\sigma_{j,\beta}}{\la_{j,\beta}-\la}
\end{align}
\vspace{-1mm}
with $f_\beta\in\RR$, $g_\beta \ge 0$, and $\sigma_{j,\beta}\ge 0$, $j=1,2,\dots,j_0$. Moreover, the following limits exist and \vspace{-1mm}satisfy
\begin{align}
\label{prop:sigma.f}
&\lim_{t\nearrow\beta}\frac{\sigma_j(t)}{(\beta-t)^2} = \sigma_{j,\beta} \geq 0, \ j=1,2,\dots,j_0, 
&&\lim_{t\nearrow\beta}\frac{1}{(\beta-t)^2}\bigg(\!\!-p(t)+\!\!\!\!\sum_{j=j_0+1}^n\frac{\sigma_j(t)}{\la_j(t)}\bigg) = f_{\beta} \in \RR, \\ \label{prop:g.0}
&\lim_{t\nearrow\beta}\frac{1}{(\beta-t)^2} \sum_{j=j_0+1}^n\frac{\sigma_j(t)}{\la_j(t)^2} = g_{\beta} \geq 0, &&\lim_{t\nearrow\beta}\frac{1}{(\beta-t)^2} \sum_{j=j_0+1}^n\frac{\sigma_j(t)}{\la_j(t)^{k}} = 0, \quad k=3,4,\dots.
\end{align}
\end{proposition}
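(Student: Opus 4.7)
The plan is to take the limit $t\nearrow\beta$ in the Nevanlinna representation \eqref{rep:-pi} of $-\pi(t,\cdot)$ given by Lemma \ref{pika-nev}, after scaling by $(\beta-t)^{-2}$, and to identify each resulting piece with a term of the claimed rational Nevanlinna form of $-\pi_2(\la)$. Theorem \ref{prop:suff}\ref{prop:suff.III} guarantees that the scaled left-hand side $-\pi(t,\la)/(\beta-t)^2$ converges pointwise to $-\pi_2(\la)$ on the good set $\RR\setminus(\sessreg(\cA)\cup\Lambda_\beta(D))$, so the real task is to reorganise the right-hand side of \eqref{rep:-pi} so that each summand has an individual limit.

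First I would split the sum in \eqref{rep:-pi} according to \ref{ass:D.has.lim.1}: indices $j\le j_0$ where $\la_j(t)\to\la_{j,\beta}\in\RR$ versus $j>j_0$ where $|\la_j(t)|\to\infty$. For fixed $\la$ in the good set and $t$ close enough to $\beta$ that $|\la|<N(t)/2$, with $N(t):=\min_{j>j_0}|\la_j(t)|\to\infty$, the second-order geometric expansion
\[
\frac{1}{\la_j(t)-\la}=\frac{1}{\la_j(t)}+\frac{\la}{\la_j(t)^2}+\frac{\la^2}{\la_j(t)^2(\la_j(t)-\la)},\qquad j>j_0,
\]
yields, with $Q_k(t):=\sum_{j>j_0}\sigma_j(t)/\la_j(t)^k$,
\[
-\pi(t,\la)=\bigl(-p(t)+Q_1(t)\bigr)+\la\,Q_2(t)+\la^2 R(t,\la)+\sum_{j=1}^{j_0}\frac{\sigma_j(t)}{\la_j(t)-\la}.
\]
After division by $(\beta-t)^2$, the higher-order remainders should vanish: the elementary bounds $|Q_k(t)|\le Q_2(t)/N(t)^{k-2}$ for $k\ge 3$ and $|R(t,\la)|\le 2Q_2(t)/N(t)^2$, both immediate from $|\la_j(t)|\ge N(t)$ and the nonnegativity of every summand of $Q_2(t)$, combined with the estimate $Q_2(t)\le\|(D(t)-\la)^{-1}b(t)\|_{\CC^n}^2=O((\beta-t)^2)$ coming from \eqref{ass:S.r} in Case~(III), together with $N(t)\to\infty$ from \ref{ass:D.has.lim.1}, do the job; this also yields the second half of \eqref{prop:g.0}.

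The individual scaled coefficients $\sigma_j(t)/(\beta-t)^2$ for $j\le j_0$, $(-p(t)+Q_1(t))/(\beta-t)^2$, and $Q_2(t)/(\beta-t)^2$ will then be shown to converge by evaluating the scaled identity at $j_0+2$ distinct real test points $\la^{(0)},\dots,\la^{(j_0+1)}$ in the good set and inverting the resulting linear system: its $(j_0+2)\times(j_0+2)$ coefficient matrix has entries $1$, $\la^{(i)}$, and $1/(\la_j(t)-\la^{(i)})$, and the limit matrix is non-singular by a standard partial-fractions argument --- clearing the denominators $\prod_j(\la_{j,\beta}-\la)$ turns any element of the kernel into a polynomial of degree at most $j_0+1$ with $j_0+2$ zeros, hence identically zero. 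The convergence of $-\pi(t,\cdot)/(\beta-t)^2$ as a whole thus forces each individual limit, yielding $f_\beta\in\RR$, $g_\beta\ge 0$, $\sigma_{j,\beta}\ge 0$ (nonnegativity inherited from $\sigma_j(t)\ge 0$ and $1/\la_j(t)^2\ge 0$) and the claimed Nevanlinna form of $-\pi_2(\la)$, together with \eqref{prop:sigma.f}--\eqref{prop:g.0}. The main obstacle is this decoupling step: it relies on the distinctness of the limits $\la_{j,\beta}$ (any coincidences must first be treated by collapsing equal terms and recovering the individual $\sigma_{j,\beta}$ from continuity of $\sigma_j$ up to $\beta$), and on the growth condition \eqref{ass:max.min} in \ref{ass:D.has.lim.1}, which is precisely what makes the elementary bounds on $|Q_k|$ and $|R|$ strong enough to beat the $(\beta-t)^{-2}$ scaling.
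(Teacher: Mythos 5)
Your proposal is correct and follows essentially the same route as the paper's proof: both start from the Nevanlinna representation \eqref{rep:-pi}, split the sum at $j_0$, expand $1/(\la_j(t)-\la)$ geometrically for the divergent eigenvalues, control the tail after division by $(\beta-t)^2$ using $\|(D(t)-\la)^{-1}b(t)\|^2=\O((\beta-t)^2)$ from \eqref{ass:S.r} in Case (III), and extract the individual limits from the convergence of $-\tpi(t,\la)/(\beta-t)^2$ at sufficiently many values of $\la$. The only (harmless) cosmetic differences are that you truncate the geometric series at second order with an explicit remainder, so your tail bounds run through $Q_2(t)\lesssim\|(D(t)-\la)^{-1}b(t)\|^2$ rather than through the estimate of $\sigma_j(t)/|\la_j(t)|^m$ via \eqref{ass:max.min} as in the paper, and that you make explicit (via the nonsingular $(j_0+2)\times(j_0+2)$ linear system at distinct test points) the decoupling step which the paper only asserts.
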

\begin{proof}
Let $\sessreg(\cA) \neq \RR$, $j\in \{j_0+1,\ldots,n\}$ be arbitrary, and $\la \in \RR \setminus \big( \sessreg(\cA) \cup \Lambda_\beta(D) \big)$. 
We may assume that $t_\la$ is chosen so large \vspace{-1mm} that 
\begin{equation}
\label{str:geom.ser}
\bigg|\frac{\la}{\la_k(t)}\bigg| \leq \frac{1}{2},  \quad t\in[t_\la,\beta), \; \ k=j_0+1, \ldots, n.
\end{equation}
Then, because $\sigma_j(t) \leq \|b(t)\|^2_{\CC^n}$, $t\in[t_\la,\beta)$, by Lemma~\ref{nice lemma1} and $\la_{k,\beta} \in \{-\infty, \infty\}$ for  $k=j_0+1,\ldots,n$, 
we obtain, for \vspace{-1mm} $t\in[t_\la,\beta)$,
\begin{align*}
\frac{\sigma_j(t)}{|\la_j(t)|^m} & \leq \frac{1}{|\la_j(t)|^m}\|D(t)-\la\|^2_{\CC^n}\|(D(t)-\la)^{-1}b(t)\|^2_{\CC^n} 
\leq \frac{1}{|\la_j(t)|^m}\ds\max_{k=j_0+1}^n|\la_{k}(t)-\la|^2\|(D(t)-\la)^{-1}b(t)\|^2_{\CC_n} \\
& \leq K_1\frac{\ds\max_{k=j_0+1}^n|\la_{k}(t)|^2}{\ds\min_{k=j_0+1}^n|\la_{k}(t)|^m}\|(D(t)-\la)^{-1}b(t)\|^2_{\CC_n},
\end{align*}
where $K_1>1$ is a constant. Therefore, \eqref{ass:max.min} and \eqref{ass:S.r} in Case (III) imply  
\begin{equation}
\label{str:sigma.lambda.m=O(1)}
\ds\frac{1}{(\beta-t)^2}\frac{\sigma_j(t)}{|\la_j(t)|^m} = \O(1), \quad t\nearrow \beta.
\end{equation}
Thus, due to \eqref{str:geom.ser}, we can \vspace{-1mm}expand
\begin{align*}
\frac{1}{(\beta-t)^2}\frac{\sigma_j(t)}{\la_j(t)-\la} =& \ds\frac{1}{(\beta-t)^2}\frac{\sigma_j(t)}{\la_j(t)}\frac{1}{1-\frac{\la}{\la_j(t)}} = \frac{1}{(\beta-t)^2}\frac{\sigma_j(t)}{\la_j(t)}\sum_{k=0}^{\infty}\Bigl(\frac{\la}{\la_j(t)}\Bigr)^k \\
=& 
\sum_{k=0}^{m-1}\frac{1}{(\beta-t)^2}\frac{\sigma_j(t)}{\la_j(t)^{k+1}} \la^k + \underbrace{\frac{\la^m}{\la_j(t)}\frac{1}{(\beta-t)^2}\frac{\sigma_j(t)}{\la_j(t)^m}\ds\sum_{\ell=0}^{\infty}\biggl(\frac{\la}{\la_j(t)}\biggr)^{\ell}}_{=\o(1), \quad  t\nearrow\beta};
\end{align*}
note that it follows from \eqref{ass:D.has.lim.j_0}, \eqref{str:sigma.lambda.m=O(1)}, and \eqref{str:geom.ser} that the last term is $\o(1)$.
Using this in \eqref{rep:-pi}, we \vspace{-1mm} find 
\begin{align}
\nonumber
-\frac{\tpi(t,\la)}{(\beta-t)^2} =& -\frac{p(t)}{(\beta-t)^2} + \sum_{j=1}^{j_0}\frac{1}{(\beta-t)^2}\frac{\sigma_j(t)}{\la_j(t)-\la} + \sum_{j=j_0+1}^n\frac{1}{(\beta-t)^2}\frac{\sigma_j(t)}{\la_j(t)-\la}\\[-1mm]
\nonumber
=&  -\frac{p(t)}{(\beta-t)^2} + \sum_{j=1}^{j_0}\frac{1}{(\beta-t)^2}\frac{\sigma_j(t)}{\la_j(t)-\la} + \sum_{j=j_0+1}^n\frac{1}{(\beta-t)^2}\frac{\sigma_j(t)}{\la_j(t)}\\[-1mm]
\label{nev-coeff}
& + \la \sum_{j=j_0+1}^n\frac{1}{(\beta-t)^2}\frac{\sigma_j(t)}{\la_j(t)^2} + \sum_{j=j_0+1}^n\sum_{k=2}^{m-1}\frac{1}{(\beta-t)^2}\frac{\sigma_j(t)}{\la_j(t)^{k+1}}\la^k + \o(1) \\[-1mm]
\nonumber
=& \frac{1}{(\beta-t)^2}\bigg(-p(t)+\sum_{j=j_0+1}^n\frac{\sigma_j(t)}{\la_j(t)}\bigg) + \la \frac{1}{(\beta-t)^2}\sum_{j=j_0+1}^n\frac{\sigma_j(t)}{\la_j(t)^2}\\[-1mm]
\nonumber
&+ \sum_{j=1}^{j_0}\frac{\sigma_j(t)}{(\beta-t)^2}\frac{1}{\la_j(t)-\la} + \sum_{k=2}^{m-1}\la^k\bigg(\frac{1}{(\beta-t)^2} \sum_{j=j_0+1}^n\frac{\sigma_j(t)}{\la_j(t)^{k+1}}\bigg) + \o(1),  \quad t\nearrow\beta.                         
\end{align}
By Theorem \ref{prop:suff} \ref{prop:suff.III}, $\frac{\tpi(t,\la)}{(\beta-t)^2}$ has a limit as $t\nearrow\beta$ which coincides with the limit $\tpi_2(\la)$ in \eqref{suff.lim.pi.r.kappa}. 
Since $\sessreg(\cA)\ne \RR$ is closed, \eqref{nev-coeff} holds for infinitely many $\la \!\in\! \RR \!\setminus\! \big( \sessreg(\cA) \cup \Lambda_\beta(D) \big)$ and so the existence of the limits \eqref{prop:sigma.f}, \eqref{prop:g.0} follows from \eqref{ass:D.has.lim.j_0} and \eqref{nev-coeff}. Since $\sigma_j(t) > 0$, $t\in[\alpha,\beta)$, the properties of the limits in \eqref{prop:sigma.f}, \eqref{prop:g.0} are obvious except for the second one in \eqref{prop:g.0}; the latter follows from \eqref{ass:D.has.lim.j_0} and the existence of the first limit \vspace{-1mm} in~\eqref{prop:g.0}~as
\begin{align*}
\label{ineq.nevanlinna.last}
\bigg|\frac{1}{(\beta-t)^2} \sum_{j=j_0+1}^n\frac{\sigma_j(t)}{\la_j(t)^{k+1}}\bigg| \leq \ds \frac 1{\min_{j=j_0+1}^n|\la_j(t)|^{k-1}} \!\!\sum_{j=j_0+1}^n\frac{1}{(\beta-t)^2}\frac{\sigma_j(t)}{\la_j(t)^2}
\to 0, \quad t\nearrow\beta, 
\quad k=2,3,\ldots,m-1.
\end{align*}
Hence $-\tpi_2(\la)$ satisfies \eqref{form.of.pi_2} and it is a Nevanlinna function since $g_{\beta}$, $\sigma_{j,\beta}\geq 0$, $j=1,2,\dots,j_0$.
\end{proof}
Theorem \ref{prop:suff} \ref{prop:suff.III} and Proposition \ref{eshmat1} together yield the following result on the topological structure of the singular part of the essential spectrum outside of the limiting set $\Lambda_\beta(D) = \{\la_{1,\beta}, \la_{2,\beta}, \ldots, \la_{j_0,\beta} \}$.

Although the existence of the limits $\tpi_2(\la)$, $\varkappa_0(\la)$, and $r_1(\la)$ is guaranteed by our assumptions only for $\la \in \RR \setminus \big( \sessreg(\cA) \cup \Lambda_\beta(D) \big)$, the rational functions representing these limits in \eqref{form.of.pi_2}, \eqref{ass:structure.ex.1}, and \eqref{ass:r_1_is_constant} are defined for all $\lambda\in\RR\setminus\Lambda_{\beta}(D)$; this observation is used in the following theorem. 

\begin{theorem}
\label{singstr}
Let $\beta< \infty$.
Assume that the assumptions of Theorem {\rm \ref{prop:suff}} \ref{prop:suff.III} are satisfied, 
i.e.\ Assumptions \ref{ass:regularity}, \ref{ass:suff}, \ref{ass:pres.reg} hold, Case {\rm (III)} $\pi_0(\la)=\pi_1(\la)=0$ prevails and the limits in \eqref{suff.lim.pi.r.kappa} exist.  
Further, suppose that Assumption \ref{ass:D.has.lim} holds and the coefficients $f_\beta$, $g_\beta$, $\sigma_{j,\beta}$ in \eqref{form.of.pi_2}, $\psi_\beta$ in \eqref{ass:structure.ex.1}, and $h_\beta$ in \eqref{ass:r_1_is_constant} satisfy
$g_{\beta}+4\psi_{\beta}\neq0$ and $h_{\beta}^2+\psi_{\beta}\sum_{j=1}^{j_0}\sigma_{j,\beta} \neq 0$. If we extend the functions $\tpi_2$, $\varkappa_0$, and $r_1$ by means of \eqref{form.of.pi_2}, \eqref{ass:structure.ex.1}, and \eqref{ass:r_1_is_constant}, respectively, to $\RR\setminus\Lambda_{\beta}(D)$ and define the \vspace{-0.9mm} set
\[ 
\Sigma := \Big\{\la\in\RR\setminus \Lambda_{\beta}(D):\, r_1(\la)^2-\varkappa_0(\la)\tpi_2(\la)\geq\frac{1}{4}\tpi_2(\la)^2\Big\}, 
\]
then $\ov{\Sigma} \subset \sess(\cA)$ \vspace{-2mm} and
\begin{align}
\label{sess.sing.str}
\sesssing(\cA)\setminus\Lambda_{\beta}(D) = \ov{\Sigma}\setminus\bigl(\sessreg(\cA)\cup\Lambda_{\beta}(D)\bigr).
\end{align}
Moreover, the set $\ov{\Sigma}$ has the following structure in terms of the coefficient functions in \eqref{ass:structure.ex.1}--\eqref{form.of.pi_2}.

\begin{enumerate}[label={\rm(\alph*)}]

\item If $\,g_\beta>0$, $\ov{\Sigma}$ 
consists of \vspace{0.5mm}

- the union of at most $j_0+1$ compact intervals if $\,g_{\beta} + 4\psi_{\beta}>0$; \vspace{0.5mm}

- the union of $(-\infty, s_1]$, at most $j_0$ compact intervals and $[s_2,\infty)$ if $\,g_{\beta} + 4\psi_{\beta}<0$. \vspace{0.5mm} 

\item If $\,g_{\beta}=0$ and $f_{\beta}\neq 0$, $\ov{\Sigma}$ 
consists ~of \vspace{0.5mm} 

- the union of $(-\infty, s]$ and at most $j_0$ compact intervals if $\,f_{\beta}\psi_{\beta}>0$;\vspace{0.5mm} 

- the union of at most $j_0$ compact intervals and $[s,\infty)$ if $\,f_{\beta}\psi_{\beta}<0$.\vspace{0.5mm} 

\item If $\,g_{\beta} = f_{\beta} = 0$, $\ov{\Sigma}$ 
consists of \vspace{0.5mm} 

- the union of at most $j_0$ compact intervals if $\,h_{\beta}^2 + \psi_{\beta}\sum_{j=1}^{j_0}\sigma_{j,\beta} < 0$;

- the union of $(-\infty, s_1]$, at most $j_0\!-\!1$ compact intervals and $[s_2,\infty)$ if $\,h_{\beta}^2 \!+ \psi_{\beta}\sum_{j=1}^{j_0}\sigma_{j,\beta} > 0$. 
\end{enumerate}
\end{theorem}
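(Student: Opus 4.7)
The plan is to split the proof into two parts: first, establish \eqref{sess.sing.str} together with $\overline\Sigma\subset\sess(\cA)$; second, analyse the structure of $\overline\Sigma$ in cases (a)--(c) by clearing denominators to reduce to a real polynomial inequality.

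For the first part, Theorem~\ref{prop:suff}~\ref{prop:suff.III} already yields $\sesssing(\cA)\setminus\Lambda_\beta(D)=\Sigma\setminus\sessreg(\cA)$, since for $\la\in\RR\setminus(\sessreg(\cA)\cup\Lambda_\beta(D))$ the extensions \eqref{form.of.pi_2}, \eqref{ass:structure.ex.1}, \eqref{ass:r_1_is_constant} coincide with the original limits, so the inequality in \eqref{sing_part_suff} is precisely that defining $\Sigma$. Because $\varkappa_0$, $\tpi_2$, $r_1$ are rational in $\la$ with poles only in $\Lambda_\beta(D)$, the set $\Sigma$ is relatively closed in $\RR\setminus\Lambda_\beta(D)$; hence $\overline\Sigma\setminus\Lambda_\beta(D)=\Sigma$, from which \eqref{sess.sing.str} follows. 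The inclusion $\overline\Sigma\subset\sess(\cA)$ then comes from the decomposition $\Sigma=(\Sigma\setminus\sessreg(\cA))\cup(\Sigma\cap\sessreg(\cA))\subset\sess(\cA)$ and the closedness of $\sess(\cA)$.

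For the structural analysis, I would set $\Pi(\la):=\prod_{j=1}^{j_0}(\la_{j,\beta}-\la)$ and observe that by \eqref{form.of.pi_2} and \eqref{ass:structure.ex.1} both $\tpi_2(\la)\Pi(\la)$ and $\varkappa_0(\la)\Pi(\la)$ are real polynomials of degree at most $j_0+1$, so that
\begin{equation*}
G(\la) \;:=\; \Bigl(r_1(\la)^2 \,-\, \varkappa_0(\la)\tpi_2(\la) \,-\, \tfrac{1}{4}\tpi_2(\la)^2\Bigr)\Pi(\la)^2
\end{equation*}
is a real polynomial. Since $\Pi(\la)^2>0$ on $\RR\setminus\Lambda_\beta(D)$, we have $\Sigma=\{\la\in\RR\setminus\Lambda_\beta(D):G(\la)\ge 0\}$, so $\overline\Sigma$ differs from the closed set $\{\la\in\RR:G(\la)\ge 0\}$ by at most the $j_0$ points of $\Lambda_\beta(D)$; in particular the number of connected components of $\overline\Sigma$ is bounded by that of $\{G\ge 0\}$, and every connected component is a closed interval.

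The final step is to determine the degree and leading coefficient of $G$ in each case. Using that $-\tpi_2\Pi$ has leading term $g_\beta(-1)^{j_0}\la^{j_0+1}$ when $g_\beta\ne 0$, $f_\beta(-1)^{j_0}\la^{j_0}$ when $g_\beta=0\ne f_\beta$, and $\bigl(\sum_{j=1}^{j_0}\sigma_{j,\beta}\bigr)(-1)^{j_0-1}\la^{j_0-1}$ when $g_\beta=f_\beta=0$, while $-\varkappa_0\Pi$ has leading term $\psi_\beta(-1)^{j_0}\la^{j_0+1}$ (the hypotheses $g_\beta+4\psi_\beta\ne 0$ and $g_\beta=0$ in cases (b),(c) forcing $\psi_\beta\ne 0$), a direct multiplication gives the leading term of $G$ as $-\tfrac{1}{4}g_\beta(g_\beta+4\psi_\beta)\la^{2j_0+2}$ in case (a), as $-f_\beta\psi_\beta\la^{2j_0+1}$ in case (b), and as $\bigl(h_\beta^2+\psi_\beta\sum_{j=1}^{j_0}\sigma_{j,\beta}\bigr)\la^{2j_0}$ in case (c). The stated counts then follow from the elementary fact that for a real polynomial of degree $N$ with non-vanishing leading coefficient, $\{G\ge 0\}$ consists of at most $N/2$ compact intervals if $N$ is even with negative leading coefficient, of at most $N/2+1$ intervals with exactly two unbounded if $N$ is even with positive leading coefficient, and of at most $\lceil N/2\rceil$ intervals with exactly one unbounded if $N$ is odd. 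The main obstacle I anticipate is the bookkeeping in case (c), where the degree of $G$ collapses from the naive $2j_0+2$ to $2j_0$ through the simultaneous vanishing of $g_\beta$ and $f_\beta$ and a cancellation between $h_\beta^2\Pi^2$ and $-\varkappa_0\tpi_2\Pi^2$; the non-vanishing of the resulting leading coefficient is precisely the hypothesis $h_\beta^2+\psi_\beta\sum_j\sigma_{j,\beta}\ne 0$, which must be tracked carefully to obtain the sharp component count.
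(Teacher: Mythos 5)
Your proposal is correct and follows essentially the same route as the paper: the identity \eqref{sess.sing.str} is obtained from \eqref{sing_part_suff} together with the observation that $\ov{\Sigma}\setminus\Sigma\subset\Lambda_\beta(D)$ (which you phrase via relative closedness, the paper via the endpoints of the finitely many intervals), and the structure of $\ov{\Sigma}$ is read off from the sign and parity of the leading coefficient of the polynomial obtained by clearing the common denominator $\prod_{j=1}^{j_0}(\la_{j,\beta}-\la)$, exactly as in \eqref{charac.str.thm1}--\eqref{charac.str.thm3}. Your leading-coefficient computations $-\tfrac14 g_\beta(g_\beta+4\psi_\beta)$, $-f_\beta\psi_\beta$, and $h_\beta^2+\psi_\beta\sum_{j}\sigma_{j,\beta}$ agree (up to the irrelevant positive factor $4$) with the paper's.
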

\begin{proof}
By the definition of $\Sigma$ and \eqref{sing_part_suff}, we \vspace{-2mm} have 
\begin{align}
\label{sesssing=Sigma!}
\Sigma\setminus\sessreg(\cA) = \sesssing(\cA)\setminus\Lambda_{\beta}(D),
\vspace*{-4mm} 
\end{align}
and \vspace{-1.5mm} hence 
\begin{align*}
\Sigma \subset (\Sigma\setminus\sessreg(\cA))\cup\sessreg(\cA) \subset \sesssing(\cA)\cup\sessreg(\cA)=\sess(\cA).
\vspace{-2mm} 
\end{align*}
Since the essential spectrum is closed, it follows that $\ov{\Sigma}\subset\sess(\cA)$.

To prove \eqref{sess.sing.str}, we first note that there are polynomials $P_{\sigma}$, $P_{\mu}$ of degree at most $j_0-2$ in $\la$ such that
\begin{align*}
-\tpi_2(\la) = f_{\beta}+g_{\beta}\la + \frac{K_{\sigma}\la^{j_0-1}+P_{\sigma}(\la)}{\prod_{j=1}^{j_0}(\la_{j,\beta}-\la)}, \quad -\varkappa_0(\la) = \phi_{\beta}+\psi_{\beta}\la + \frac{K_{\mu}\la^{j_0-1}+P_{\mu}(\la)}{\prod_{j=1}^{j_0}(\la_{j,\beta}-\la)}, 
\end{align*}
with $K_{\sigma} = (-1)^{j_0-1}\sum_{j=1}^{j_0}\sigma_{j,\beta}$, $K_{\mu} = (-1)^{j_0-1}\sum_{j=1}^{j_0}\mu_{j,\beta}$. Hence $\Sigma$ is the union of a finite number of intervals and the endpoints of these intervals that are not in $\Sigma$ belong to $\Lambda_{\beta}(D)$, \vspace{-1mm} i.e. 
\begin{align}
\label{imp.Sigma}
\ov{\Sigma}\setminus\Sigma\subset\Lambda_{\beta}(D).
\vspace{-1mm} 
\end{align}
This implies  \vspace{-1mm}  that
\begin{align}
\label{eq:ov.Sigma1}
\ov{\Sigma}\setminus\bigl(\sessreg(\cA)\cup\Lambda_{\beta}(D)\bigr) = \Sigma\setminus\bigl(\sessreg(\cA)\cup\Lambda_{\beta}(D)\bigr) = \Sigma\setminus \sessreg(\cA).
\vspace{-1mm} 
\end{align}
Now \eqref{sess.sing.str} follows from \eqref{sesssing=Sigma!} and \eqref{eq:ov.Sigma1}.

To prove the claims in (a), (b), (c), observe that there are polynomials $P_3$, $P_4$, $P_5$ with $\deg P_3\leq 2j_0+1$, $\deg P_4\leq 2j_0$, $\deg P_5\leq 2j_0-1$, respectively, such \vspace{1mm} that   
\begin{align}
\label{charac.str.thm1}
&\Sigma=\Big\{\la\in\RR \setminus \Lambda_{\beta}(D): \, g_{\beta}(g_{\beta}+4\psi_{\beta})\la^{2j_0+2}+P_3(\la) \leq 0 \Big\}  &&\text{if} \quad g_{\beta}>0,
\\
\label{charac.str.thm2}
&\Sigma=\Big\{\la\in\RR \setminus \Lambda_{\beta}(D): \, f_{\beta}\psi_{\beta}\la^{2j_0+1}+P_4(\la) \leq 0 \Big\}  &&\text{if} \quad g_{\beta}=0,\; f_{\beta}\neq 0,
\\[-2mm]
\label{charac.str.thm3}
&\Sigma=\Big\{\la\in\RR \setminus \Lambda_{\beta}(D): \, \Big(h_{\beta}^2 + \psi_{\beta}\sum_{j=1}^{j_0}\sigma_{j,\beta}\Big) \la^{2j_0} + P_5(\la) \geq 0 \Big\}  &&\text{if} \quad g_{\beta}=0,\; f_{\beta}= 0.
\end{align}
Now the claims follow from \eqref{charac.str.thm2} by elementary sign considerations using the various assumptions on $g_{\beta}$, $\psi_{\beta}$, $h_{\beta}$, and $\sigma_{j,\beta}$ in (a), (b), and~(c). 
\end{proof}
Figure \ref{fig:str} shows a possible location of the sets in Proposition \ref{ineq:reg.1} and Theorem \ref{singstr} with $n\geq4$, $j_0=4$ in the second case of (a) or (c). 
\begin{figure}[htb!]
\includegraphics[width=0.85 \textwidth]{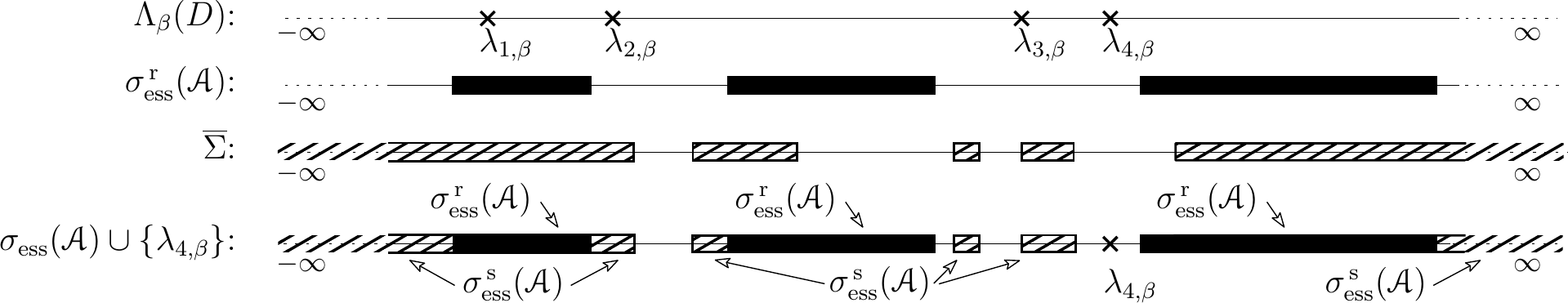}
\caption{Illustration of a possible structure of the essential spectrum of $\cA$.}
\label{fig:str}
\end{figure}

Note that the regular part or the singular part of the essential spectrum may contain some points of the exceptional set $\Lambda_{\beta}(D)$, see Example B.
In Figure \ref{fig:str}, nothing can be said about the point $\la_{4,\beta}$ while the closedness of the essential spectrum yields that $\la_{i,\beta} \in \sess(\cA)$ for $i=1,2,3$.

\begin{remark}
\label{rem:ess.spec.radius}
We define the \emph{essential spectral radius}, introduced in \cite{Nussbaum70} for bounded linear operators, of $\cA$~as
\[
r_{\rm{ess}}(\cA) := \sup\{|\la|: \, \la\in\sess(\cA)\} \in [0,\infty].
\]
Then Theorem \ref{singstr} shows that $r_{\rm{ess}}(\cA)<\infty$  if and only if the regular part of the essential spectrum is bounded and 
for the singular part the first case of either (a) or (c) prevails. 
Moreover, a bound for $r_{\rm{ess}}(\cA)$ can be given by estimating the roots with largest absolute value of the polynomials in \eqref{charac.str.thm1}, \eqref{charac.str.thm3}. In particular, if $D$ is scalar i.e.\  $n=1$ and $j_0=0$, then $r_{\rm{ess}}(\cA)<\infty$ only if we are in the first case of (a); in this case $\Sigma=[\la^-,\la^+]$ and thus
\[
r_{\rm{ess}}(\cA) = \max \Big\{ \sup_{t\in[\alpha,\beta)} |\Delta(t)|, |\la^+|, |\la^-| \Big\}
\vspace{-4mm}
\]
where 
\begin{align*}
\la^\pm \!:=\! \frac{-N \pm \sqrt{N^2\!-\!MK}}{M}, \quad
M:=g_\beta(g_\beta+4\psi_\beta), \quad 
N:=f_\beta(g_\beta+2\psi_\beta)+2g_\beta\phi_\beta, \quad 
K:=f_\beta(f_\beta+4\phi_\beta)-4h_\beta^2.
\end{align*}
\end{remark}

%
\section{Systematic analysis of typical examples}

In this section, we apply our results {to two different typical examples of which one arises in linear magnetohydrodynamic stability analysis.
We identify the particular assumptions under which these examples were treated in earlier papers with special cases of our general classification, i.e.\ Cases (I), (II), (III) in Section \ref{subsec:suff}, and, using our abstract results, we provide a complete analysis of the essential spectrum of these examples in all cases.

In Example A, we show that the so-called quasi-regularity conditions in \cite{KN03} and \cite{QCH11} mean that Case (III) prevails, the singular part of the essential spectrum is non-empty and can be computed by our abstract results (see our earlier work \cite{ILLT13}).
The paper \cite{KLN08} where the quasi-regularity conditions are not satisfied is an example for Cases (I) or~(II);  here our abstract Theorem~\ref{prop:suff} yields that the singular part of the essential spectrum is always~empty.

Example B is a more general model of an operator arising in linearised magnetohydrodynamics (MHD) which describes the oscillations of plasma in an equilibrium configuration in a cylindrical domain and whose essential spectrum was first calculated by Kako \cite{Kako87}.
The quasi-regularity conditions assumed in the papers \cite{HMN99}, \cite{MNT02}, and 
\cite{FMM00} amount to Case (III) and we compute the singular part of the essential spectrum by means of our abstract result Theorem \ref{thm:ess.pres}. We mention that \cite{FMM00} also contains results for the non-symmetric case.
It seems that \cite{Moeller04} is the only paper where Case (III) does not prevail; its assumptions correspond to the first case of Case (I). 
Here we discuss all possible cases, in particular, the second case of Case (I) and Case (II)  not covered by any earlier~work.

In both examples, we adopt to the following strategy: if necessary, we transform the singularity to the right endpoint of the interval by means of a unitary transformation. Next, we determine the exceptional set $\Lambda_{\beta}(D)$. Then we show that Assumption \ref{ass:suff} in Cases (I), (III) and 
Assumption~\ref{ass:suff'} in Case (II) are satisfied. Finally, we verify all requirements of Theorem~\ref{thm:ess.pres} or of Theorem~\ref{prop:suff} resp.\ Remark \ref{rem:SII} and use them to describe the essential spectrum.

\subsection*{Example A}
Let $\wt\varrho, \wt m \in C^1([0,1],\RR)$, $\wt\psi \in C^1([0,1],\CC)$, $\wt\phi \in C([0,1],\RR)$. Assume $\wt\varrho > 0$ on $[0,1]$ and $\wt m(0)\ne0$. 
Consider the operator \vspace{-2mm} matrix
\begin{align}
\label{KLNO8A_0}
\wt\cA_0 = \begin{pmatrix} -\dx \wt\varrho \dx + \wt\phi & \ds\dx \frac{\wt\psi}{x} \\[1.8ex]
\ds-\frac{\ov{\wt\psi}}{x} \dx & \ds\frac{\wt m}{x^2} \end{pmatrix}
\end{align}
with domain $\cD(\wt\cA_0)=C_0^{2}((0,1)) \oplus C_0^{1}((0,1))$ in the Hilbert space $L^2((0,1)) \oplus L^2((0,1))$
and let $\wt\cA$ be an arbitrary closed symmetric extension of $\wt\cA_0$. 

\subsubsection*{Transformation to the form~\eqref{A02} and verification of Assumption~\ref{ass:regularity}}\!
If we introduce the unitary \vspace{-0.1mm} trans\-formations
\begin{equation*}
U:L^2((0,1)) \to L^2((0,1)), \quad (Uu)(x):=u(1-x), 
\end{equation*}
and $\mathcal{U}:=\text{diag}(U,U)$, then $\cA_0 := \mathcal{U}\wt\cA_0\mathcal{U}^{-1}$ has the form \eqref{A02} with $n=1$ and coefficients  
\begin{align*}
p(t) := \varrho(t), \quad q(t): =\phi(t), \quad b(t) := \frac{\ov\psi(t)}{1-t}, \quad c(t) := 0, \quad D(t) := \frac{m(t)}{(1-t)^2}, 
\end{align*}
for $t\in[0,1)$ where $\varrho(t) := \wt\varrho(1-t)$, $\psi(t) := \wt\psi(1-t)$, $m(t) := \wt m(1-t)$ and $\phi(t) := \wt\phi(1-t)$. By the smoothness assumptions on the coefficients of $\wt\cA_0$, the coefficients of $\cA_0$ satisfy Assumption~\ref{ass:regularity}.

\subsubsection*{The set $\Lambda_\beta(D)$}
Since $\wt m(0)\!\neq\!0$, we have $\lim\limits_{t\nearrow 1} D(t)\!=\! \lim\limits_{t\nearrow 1}\dfrac{m(t)}{(1-t)^2} \!\in\! \{ - \infty, \infty \}$, hence, by  \eqref{Lam.beta.lim}, 
\[
\Lambda_\beta(D) \!=\! \emptyset.
\]
 
\subsubsection*{Verification of Assumption \ref{ass:suff} resp.\ \ref{ass:suff'}}

Elementary calculations show that the coefficients in the asymptotic expansion of $\tpi(t,\la)$ in \eqref{pi_asymp} are given by
\begin{align*}
\tpi_0(\la) = \frac{(\varrho m - |\psi|^2)(1)}{m(1)}, \quad \tpi_1 (\la)= -\frac{m'(1)}{m(1)}\tpi_0(\la) + \frac{(\varrho m - |\psi|^2)'(1)}{m(1)}. 
\end{align*}
Therefore, in terms of the original coefficients in \eqref{KLNO8A_0}, the three cases in \eqref{Cases} can be classified as 
\begin{align}
\nonumber
&\textbf{Case (I)}:  &&(\wt\varrho \wt m-|\wt\psi|^2)(0) \neq 0; &\hspace{888mm}
\\\nonumber
&\textbf{Case (II)}: &&(\wt\varrho \wt m-|\wt\psi|^2)(0)=0,\ \ (\wt\varrho \wt m-|\wt\psi|^2)'(0) \neq 0;&
\\\nonumber
&\textbf{Case (III)}:&&(\wt\varrho \wt m-|\wt\psi|^2)(0)=0,\ \ (\wt\varrho \wt m-|\wt\psi|^2)'(0) = 0.&
\end{align}
To verify Assumption \ref{ass:suff}, note that $c\equiv0$, $r \equiv 0$, $\varkappa(\cdot,\la) = \phi-\la \in C^1([0,1])$, and
\[
\lim_{t\nearrow 1}\frac{1}{1-t}\frac{b(t)}{D(t)-\la} = \frac{\wt\psi(0)}{\wt m(0)}.
\]
Hence all conditions \eqref{ass:S.Rb}--\eqref{ass:S.kappac} in Assumption \ref{ass:suff} for Case (I), and  all conditions  \eqref{rem:SII.Rb}--\eqref{rem:SII.kappac} in Assumption \ref{ass:suff'} for Case (II) are satisfied (cf.\ Remark \ref{rem:SII}).

\subsubsection*{Essential spectrum in Cases {\upshape(I)} and {\upshape(II)}}

By what was shown above, we can apply Theorem \ref{prop:suff} \ref{prop:suff.I} which yields that, in Cases (I) and (II), 
\[
\sesssing\bigl(\wt\cA\bigr) = \sesssing(\cA) = \emptyset.
\]
Therefore it follows from Theorem \ref{thm:ess.abs} that 
$$\sess(\wt\cA) = \sess(\cA) = \sessreg(\cA) = \ov{\Delta([0,1))} = \ov{\wt\Delta((0,1])},$$
\vspace{-1mm}where  
\[
\Delta(t)=\wt\Delta(1-t), \quad t\in [0,1), \qquad \wt\Delta(t) = \frac 1{t^2} \Big(\wt m(t) - \frac{|\wt\psi(t)|^2}{\wt\varrho (t)}\Big), \quad t\in (0,1]. 
\]

\subsubsection*{Essential spectrum in Case {\upshape(III)}}
This case was already treated in \cite[Example 7.2]{ILLT13} since it satisfied the stronger assumptions therein, and we just include the result for completeness. Here $\sesssing(\wt\cA) \ne \emptyset$ and
\begin{equation}
\label{verylast}
  \sess(\wt\cA) = \sessreg(\wt\cA) \cup \sesssing(\wt\cA) = \ov{\wt\Delta((0,1])} \cup {\rm conv}\, \Big\{ \wt\Delta_0 , \frac{4\wt m(0)\wt\phi(0)+\wt\rho(0)\wt\Delta_0}{4\wt m(0)+\wt\rho(0)} \Big\}
\end{equation}
where conv denotes the convex hull and  \vspace{-2mm} $\wt\Delta_0 := \lim_{t\searrow 0} \wt \Delta(t)$.

\subsubsection*{Essential spectral radius}
In Cases (I) and (II), we have $r_{\rm{ess}}(\wt\cA)= \infty$ as $\wt\Delta$ is not bounded. In Case (III), $r_{\rm{ess}}(\wt\cA)<\infty$ and we conclude from \eqref{verylast}, observing $\wt\Delta_0 \in \ov{\wt\Delta((0,1])}$,
\[
r_{\rm{ess}}(\wt\cA)=\max\bigg\{ \sup_{t\in(0,1]} \big| \wt \Delta(t) \Big|     
, \, \Bigl|\frac{4\wt m(0)\wt\phi(0)+\wt\rho(0)\wt\Delta_0}{4\wt m(0)+\wt\rho(0)}\Bigr|\bigg\}.
\]

\subsection*{Example B}
\label{moeller_example}
Let $\wt\vartheta,\wt\delta_{11},\wt\delta_{22},\wt\beta_1, \wt\beta_2 \in C^1([0,1],\RR)$, $\wt\gamma,\wt\delta_{12}\in C^1([0,1],\CC)$, $\wt\phi\in C([0,1],\RR)$ and assume that $\wt\vartheta(x)\ne0$, $x\in[0,1]$, and $\wt\delta_{11}(0)\ne0$. We consider the operator matrix
\begin{align}
\label{3x3example}
\wt\cA_0=\left(\begin{array}{c|cc}
\ds-\dx\frac{\wt\vartheta}{x}\dx x+\wt\phi
& \ds\ii\dx\frac{\wt\beta_1}{x}+\wt\gamma & \ii\dx\wt\beta_2 \\[1.5ex]
\hline
\ds\rule{0ex}{4.5ex}\frac{\wt\beta_1}{x^2}\ii\dx x+\ov{\wt\gamma}
& \ds\frac{\wt\delta_{11}}{x^2} & \ds\frac{\wt\delta_{12}}{x} \\[1.5ex]
\ds\frac{\wt\beta_2}{x}\ii\dx x
& \ds\frac{\,\ov{\wt\delta_{12}}\,}{x} & \wt\delta_{22}
\end{array}\right)
\end{align}
with domain $\cD(\wt\cA_0) \!=\! C^2_0(0,1)\oplus(C^1_0(0,1))^2$ in the 
direct 
sum $L^2((0,1),x) \oplus (L^2((0,1),x))^2$ of weighted $L^2$-spaces. 
\subsubsection*{Transformation to the form \eqref{A02} and verification of Assumption \ref{ass:regularity}}
If we introduce the unitary \vspace{-0.1mm} trans\-formations
\[
U:L^2((0,1),x) \to L^2((0,1)), \quad (Uu)(x) := \sqrt{1-x}\,u(1-x),
\]
and $\mathcal{U}:=\text{diag}(U,U)$, then $\cA_0 := \mathcal{U}\wt\cA_0\mathcal{U}^{-1}$ has the form \eqref{A02} 
with $n=2$ and coefficients 
\begin{equation}\label{ex.b.coef}
\hspace{-10mm}
\begin{array}{ll}
p(t) \!:=\! \vartheta(t), & \ds q(t)  \! :=  \! \phi(t) \!+\! 
\frac{1}{2}\frac{\partial}{\partial 
	t}\frac{\vartheta(t)}{1-t}\!+\!\frac{1}{4}\frac{\vartheta(t)}{(1-t)^2},\\
\ds
b(t) \!:=\! -\ii \left(\frac{\beta_1(t)}{1-t}, \, \beta_2(t) \right)^{\!\! \mathrm{t}}\!\!,
&
\ds
c(t) \! :=\! -\frac{b(t)}{2(1-t)}+
\begin{pmatrix}
\,\ov{\gamma(t)}\, \\ 0
\end{pmatrix},
\end{array}
\quad
D(t)\!:=\!
\begin{pmatrix}
\ds \frac{\delta_{11}(t)}{(1-t)^2} \!&\!\ds \frac{\delta_{12}(t)}{1-t}\\[2ex]
\ds \frac{\ov{\delta_{12}(t)}}{1-t} \!&\! \ds\delta_{22}(t)
\end{pmatrix}\!\!,
\hspace*{-4.5mm} 
\end{equation} 
for $t\!\in\! [0,1)$ with $\vartheta(t)\!:=\!\wt\vartheta(1\!-\!t)$, $\phi(t)\!:=\!\wt\phi(1\!-\!t)$, 
$\gamma(t)\!:=\!\wt\gamma(1\!-\!t)$, $\beta_i(t)\!:=\!\wt\beta_i(1\!-\!t)$, and 
$\delta_{ij}(t)\!:=\!\wt\delta_{ij}(1\!-\!t)$, $i,j\!=\!1,2$. 
By the smoothness assumptions on the coefficients of $\wt\cA_0$, the coefficients of $\cA_0$ satisfy Assumption~\ref{ass:regularity}.

The following functions related to the Hermitian matrix-valued function $\Delta$ given by
\begin{align}
\label{moeller:Delta-tilde}
\Delta(t)\!=\!\wt\Delta(1-t), \ t\!\in\! [0,1), \quad 
\wt\Delta(t) \!=\! \begin{pmatrix}
\ds\frac1{t^2}\biggl(\wt\delta_{11}-\frac{\wt\beta_1^2}{\wt\vartheta}\biggr)(t)
& \ds\frac1{t}\biggl(\wt\delta_{12}-\frac{\wt\beta_1\wt\beta_2}{\wt\vartheta}\biggr)(t) \\[2.3ex]
\ds\frac1{t}\biggl(\ov{\wt\delta_{12}}-\frac{\wt\beta_1\wt\beta_2}{\wt\vartheta}\biggr)(t)
& \ds\biggl(\wt\delta_{22}-\frac{{\wt\beta_2}^2}{\wt\vartheta}\biggr)(t)
\end{pmatrix}, \quad t\!\in\! (0,1],
\end{align}
play an important role in the sequel: 
\begin{alignat}{2}
\Psi(t) :=& (1-t)^2 \vartheta(t) \ \text{tr}(\Delta(t)) = (\vartheta \delta_{11}-\beta_1^2)(t)+(1-t)^2(\vartheta \delta_{22}-\beta_2^2)(t), 
\quad && t\in[0,1], \label{Psi} \\
\Phi(t) :=& (1-t)^2 \vartheta(t)\det(\Delta(t)) = 
\frac{1}{\vartheta(t)}\bigl[(\vartheta \delta_{11}-\beta_1^2)(t)(\vartheta \delta_{22}-\beta_2^2)(t)-|(\vartheta \delta_{12}-\beta_1\beta_2)(t)|^2\bigr],
\quad && t\in[0,1]. \label{Phi}
\end{alignat}

\subsubsection*{The set $\Lambda_\beta(D)$ and Assumption {\rm (T1)}}
It is not difficult to check that the 
two eigenvalues $\la_1(t)$ and $\la_2(t)$ of $D(t)$, $t\in[0,1)$, can be numbered such that $\la_1(t)$ has a finite limit and $|\la_2(t)|$ tends to $\infty$ for $t\nearrow 1$; hence Assumption (T1) holds with $j_0=1$. Further, $\la_2(t)$ has the asymptotic behaviour
\begin{align}
\label{asymp:la_1}
\la_{2}(t) = \frac{\delta_{11}(1)}{(1-t)^2} + \o\Bigl(\frac{1}{(1-t)^2}\Bigr), \quad t\nearrow 1.
\end{align}
Since $\delta_{11}(1) \!=\! \wt \delta_{11}(0)\ne0$, we have $\lim\limits_{t\nearrow 1}\la_{2}(t) \!\in\! \{- \infty,\infty\}$, while for $\lambda_1(t)$ 
the limit is  \vspace{-2mm} finite, 
\begin{align}
\label{la_2,2}
\la_{1,1} = \lim_{t\nearrow 1}\la_{1}(t) = \lim_{t\nearrow 1}\frac{\det D(t)}{\la_1(t)}
=\wt\delta_{22}(0)-\frac{|\wt\delta_{12}(0)|^2}{\wt\delta_{11}(0)}.
\vspace{-1mm}
\end{align}
Hence, according to \eqref{Lam.beta.lim}, we obtain 
\[
\Lambda_\beta(D) = \{\la_{1,1}\}.
\]

\subsubsection*{Verification of Assumption \ref{ass:suff} resp.\ \ref{ass:suff'}}

In the following proposition, we compute the first two coefficients $\tpi_0(\la)$ and $\tpi_1(\la)$ of the asymptotic expansion of
$\tpi(t,\la)$ in \eqref{pi_asymp} and characterize the possible cases in~\eqref{Cases}.

\begin{proposition}
\label{lem:pi.moeller}
Let $\la \in \RR \setminus(\sessreg(\cA) \cup \Lambda_\beta(D))$. Then
$\tpi(t,\la) = \tpi_0(\la) + \tpi_1(\la)(t-1) + \o(1-t)$, $t\nearrow 1$, 
with
	\begin{align}
	\label{tpi0}
	\tpi_0(\la) & = \frac{\Phi(1)-\Psi(1)\la}{\bigl(\delta_{11}\delta_{22}-|\delta_{12}|^2-\la\delta_{11}\bigr)(1)},
	\\
	\label{tpi1}
	\tpi_1(\la) &= \frac{\Phi'(1)-\Psi'(1)\la - \tpi_0(\la)\bigl(\delta_{11}\delta_{22}-|\delta_{12}|^2-\la\delta_{11}\bigr)'(1)}{\bigl(\delta_{11}\delta_{22}-|\delta_{12}|^2-\la\delta_{11}\bigr)(1)},
	\end{align}
where functions $\Phi$ and $\Psi$ are defined in \eqref{Psi} and \eqref{Phi}, respectively. Moreover, in terms of the original coefficients in \eqref{3x3example}, the three cases in \eqref{Cases} can be classified~as
\begin{align}
	&{\text{ \bf Case (I)}}:  && (\wt\vartheta \,\wt\delta_{11}-\wt\beta_1^2)(0)\neq 0 \; \text{or} & 
	\label{Cases_moellera} 
	\\ 
	& && (\wt\vartheta \, \wt\delta_{11}-\wt\beta_1^2)(0) = 0 \; \text{and} \; (\wt\vartheta \, \wt\delta_{12}-\wt\beta_1\wt\beta_2)(0)\neq 0;& 
	\nonumber
	\\ 
	&{\text{ \bf Case (II)}}: &&(\wt\vartheta \, \wt\delta_{11}-\wt\beta_1^2)(0) = 0, \ \ (\wt\vartheta \, \wt\delta_{12}-\wt\beta_1\wt\beta_2)(0) = 0 \; \text{and} \; (\wt\vartheta \, \wt\delta_{11}-\wt\beta_1^2)'(0)\neq 0;&
	\label{Cases_moeller}
	\\
	&{\text{ \bf Case (III)}}:&&(\wt\vartheta \, \wt\delta_{11}-\wt\beta_1^2)(0) = 0, \ \ (\wt\vartheta \, \wt\delta_{12}-\wt\beta_1\wt\beta_2)(0) = 0 \; 
	\text{and} \; (\wt\vartheta \, \wt\delta_{11}-\wt\beta_1^2)'(0) = 0. &
	\label{Cases_moellerc}
	\end{align}
\end{proposition}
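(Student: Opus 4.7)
My plan is to reduce everything to Lemma \ref{lem:pidet}, which expresses the Schur leading coefficient as $\pi(\mydot,\la)=p\det(\Delta-\la)/\det(D-\la)$, and then to exploit the very particular block structure \eqref{ex.b.coef}, \eqref{moeller:Delta-tilde} in which the singular $(1-t)^{-2}$ and $(1-t)^{-1}$ factors are arranged so that, after multiplying numerator and denominator by $(1-t)^2$, all apparent singularities cancel. Concretely, I would first compute
\[
(1-t)^2\det(D(t)-\la) = \delta_{11}\delta_{22}-|\delta_{12}|^2-\la\delta_{11}+(1-t)^2(\la^2-\la\delta_{22}),
\]
by direct $2\times 2$ expansion, and observe that its value at $t=1$ equals $\wt\delta_{11}(0)(\la_{1,1}-\la)$, which is nonzero exactly because $\la\notin\Lambda_\beta(D)=\{\la_{1,1}\}$ and $\wt\delta_{11}(0)\ne 0$.

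Next I would perform the same manipulation on $\vartheta(t)\det(\Delta(t)-\la)$. Expanding the $2\times 2$ determinant of $\Delta(t)-\la$ and multiplying by $\vartheta(t)(1-t)^2$, the mixed terms collapse precisely into the functions $\Psi$ and $\Phi$ defined in \eqref{Psi}, \eqref{Phi}, yielding
\[
(1-t)^2\vartheta(t)\det(\Delta(t)-\la)=\Phi(t)-\la\Psi(t)+\vartheta(t)\la^2(1-t)^2.
\]
Dividing both displays gives $\pi(t,\la)$ as an explicit rational function in $t$ that is $C^2$ at $t=1$. Setting $t=1$ proves \eqref{tpi0}. Since $\pi(\cdot,\la)\in C^2([\alpha,\beta))$ by Assumption~\ref{ass:regularity} (or \ref{ass:pres.reg}), one has $\tpi_1(\la)=\partial_t\pi(1,\la)$, which I obtain by the quotient rule; the $\partial_t$ of the $(1-t)^2$--pieces vanish at $t=1$, so only the derivatives of $\Phi(t)-\la\Psi(t)$ and of the $t$--independent-looking numerator survive, and \eqref{tpi1} follows at once.

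For the case classification, I evaluate $\Psi(1)=(\wt\vartheta\wt\delta_{11}-\wt\beta_1^2)(0)$ and note that, when $\Psi(1)=0$, the definition \eqref{Phi} forces $\Phi(1)=-|(\wt\vartheta\wt\delta_{12}-\wt\beta_1\wt\beta_2)(0)|^2/\wt\vartheta(0)$. Hence $\tpi_0(\la)\ne 0$ for all but at most one $\la$ precisely when either $\Psi(1)\ne 0$ or $(\wt\vartheta\wt\delta_{12}-\wt\beta_1\wt\beta_2)(0)\ne 0$, which is \eqref{Cases_moellera}. When both quantities vanish, $\tpi_0\equiv 0$, and the second term in \eqref{tpi1} drops out, so $\tpi_1(\la)=(\Phi'(1)-\la\Psi'(1))/(\wt\delta_{11}(0)(\la_{1,1}-\la))$. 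Using $\delta_{ij}(t)=\wt\delta_{ij}(1-t)$ etc., I compute $\Psi'(1)=-(\wt\vartheta\wt\delta_{11}-\wt\beta_1^2)'(0)$, and, after exploiting the simultaneous vanishing at $t=1$ of $A:=\vartheta\delta_{11}-\beta_1^2$, $C:=\vartheta\delta_{12}-\beta_1\beta_2$ and of $|C|^2$'s derivative, $\Phi'(1)=-(\wt\vartheta\wt\delta_{11}-\wt\beta_1^2)'(0)\,(\wt\vartheta\wt\delta_{22}-\wt\beta_2^2)(0)/\wt\vartheta(0)$. Thus $\tpi_1(\la)$ is proportional to $(\wt\vartheta\wt\delta_{11}-\wt\beta_1^2)'(0)$, which separates Cases \eqref{Cases_moeller} and \eqref{Cases_moellerc}. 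The only real bookkeeping obstacle is tracking the sign flips produced by the substitution $s\mapsto 1-t$ when differentiating, and making sure that in the degenerate situations $\Psi(1)=\Phi(1)=0$ the $|C|^2$ term really does contribute nothing to $\Phi'(1)$; both are routine once one notes that $(|C|^2)'=2\Re(\ov{C}C')$ vanishes at any point where $C$ does.
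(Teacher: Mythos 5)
Your reduction to Lemma \ref{lem:pidet}, the identities $(1-t)^2\det(D-\la)=\Xi(\cdot,\la)$ and $(1-t)^2\vartheta\det(\Delta-\la)=\vartheta(1-t)^2\la^2-\Psi\la+\Phi$, and the resulting formulas \eqref{tpi0}, \eqref{tpi1} are exactly the paper's route, and your computations of $\Psi(1)$, $\Phi(1)$, $\Psi'(1)$, $\Phi'(1)$ (including the observation that $(|C|^2)'=2\Re(\ov C C')$ vanishes where $C$ does) are correct.

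However, there is a genuine gap in the case classification. The classification \eqref{Cases} is pointwise in $\la$: you must show that \emph{for every} $\la\in\RR\setminus(\sessreg(\cA)\cup\Lambda_\beta(D))$ one has $\tpi_0(\la)\neq 0$ when \eqref{Cases_moellera} holds, not merely "for all but at most one $\la$". When $\Psi(1)\neq 0$, the formula \eqref{tpi0} leaves open the possibility $\tpi_0(\la)=0$ at the single point $\la=\Phi(1)/\Psi(1)$, and your argument does not exclude that this point is admissible. The paper's proof devotes its main effort to precisely this: it computes the asymptotics $\la_+(\Delta(t))=\Psi(1)\vartheta(1)^{-1}(1-t)^{-2}+\o((1-t)^{-2})$ and hence $\la_-(\Delta(t))=\det(\Delta(t))/\la_+(\Delta(t))\to\Phi(1)/\Psi(1)$ as $t\nearrow 1$, so that $\Phi(1)/\Psi(1)\in\ov{\Lambda_{[0,1)}(\Delta)}=\sessreg(\cA)$ and the exceptional value is not admissible. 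The same issue recurs for $\tpi_1$: your own formulas give $\tpi_1(\la)\propto(\wt\vartheta\wt\delta_{11}-\wt\beta_1^2)'(0)\cdot\bigl((\wt\vartheta\wt\delta_{22}-\wt\beta_2^2)(0)/\wt\vartheta(0)-\la\bigr)$, which is \emph{not} simply proportional to $(\wt\vartheta\wt\delta_{11}-\wt\beta_1^2)'(0)$ — it vanishes at $\la=\Phi'(1)/\Psi'(1)$, and one must again show (as the paper does via the refined asymptotics $\la_+(\Delta(t))=-\Psi'(1)\vartheta(1)^{-1}(1-t)^{-1}+\o((1-t)^{-1})$, $\la_-(\Delta(t))\to\Phi'(1)/\Psi'(1)$) that this value lies in $\ov{\Lambda_{[0,1)}(\Delta)}$ and is therefore excluded. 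Without these two exclusion arguments the stated equivalence between \eqref{Cases} and \eqref{Cases_moellera}--\eqref{Cases_moellerc} is not proved.
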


\begin{proof}
  Let $\la\notin \sessreg(\cA)\cup \Lambda_{\beta}(D) = \ov{\Lambda_{[0,1)}(\Delta)}\cup \Lambda_{\beta}(D)$, where $\Delta$ is as in \eqref{moeller:Delta-tilde}.
	First we note that Lemma \ref{lem:pidet} implies  that, \vspace{-2mm} for $t\in[0,1)$,
	\begin{align}\label{pi_moeller}
	\tpi (t,\la)=
	\frac{\Theta(t,\la)}{\Xi(t,\la)},
	\vspace{-2mm}
	\end{align}
	where 
	\begin{align}
	\label{pi_moellera}
	\Theta(t,\la) := & (1-t)^2 \vartheta(t) \det(\Delta(t)-\la) =
	\vartheta(t)(1-t)^2\la^2-\Psi(t)\la+\Phi(t),\\
	\label{pi_moellerb}
	\ds\Xi(t,\la) :=& (1-t)^2 \det(D(t)-\la) = \delta_{11}(t)\delta_{22}(t)-|\delta_{12}(t)|^2-\la\bigl(\delta_{11}(t)+(1-t)^2 \delta_{22}(t)\bigr)+\la^2(1-t)^2.
	\end{align}
	It is not difficult to see that, as $t\nearrow 1$, the functions $\Theta(\cdot,\la)$ and $\Xi(\cdot ,\la)$ have the asymptotic expansions
	\begin{align}
	\label{Theta}
	\Theta(t,\la) &= \Phi(1)-\Psi(1)\la + \bigl(\Phi'(1)-\Psi'(1)\la\bigr)(t-1) + \o(1-t), 
	\\
	\label{Xi}
	\Xi(t,\la) &= \bigl(\delta_{11}\delta_{22}-|\delta_{12}|^2-\la\delta_{11}\bigr)(1) + \bigl(\delta_{11}\delta_{22}-|\delta_{12}|^2-\la\delta_{11}\bigr)'(1)(t-1) + \o(1-t).
	\end{align}
	Now comparing coefficients for the powers $(1-t)^k$, $k\in\N_0$, yields that the coefficients of $\tpi(t,\la)$ in \eqref{pi_asymp}
	are given by \eqref{tpi0}, \eqref{tpi1};
	note that $\bigl(\delta_{11}\delta_{22}-|\delta_{12}|^2-\la\delta_{11}\bigr)(1) \neq 0$ since $\la\notin\Lambda_\beta(D)$. 
	In order to prove the characterization of Cases (I), (II), (III), by \eqref{tpi0}, \eqref{tpi1}, and \eqref{Psi}, \eqref{Phi}, 
	it suffices to show that
	\begin{align}
	\label{tpi0=0}
	\tpi_0(\la) = 0 \ & \iff \ \Psi(1)=0, \ 
	\Phi(1)= 0\\[-3mm]
	\intertext{and, if \vspace{-2mm} $\tpi_0(\la) = 0$,}
	\label{tpi1=0}
	\tpi_1(\la) = 0 \ & \iff \ \Psi'(1)=0.
	\end{align}
	To this end, we first observe that, by \eqref{pi_moellera}, 
	\[
	\det(\Delta(t)-\la) = \frac 1 {\vartheta(\la)} \frac 1{(1-t)^2} \Theta(t,\la) =
	\la^2-\frac{\Psi(t)}{\vartheta(t)}\frac{1}{(1-t)^2}\la + \frac{\Phi(t)}{\vartheta(t)}\frac{1}{(1-t)^2},
	\quad t\in [0,1),
	\]
	and thus the eigenvalues $\la_{\pm}(\Delta(t))$ of $\Delta(t)$ are given by
	\begin{equation}
	\label{lapmDelta}
	\la_{\pm}(\Delta(t)) = \frac{\Psi(t) \pm \sqrt{\Psi(t)^2-4\vartheta(t)\Phi(t)(1-t)^2}}{2\vartheta(t)(1-t)^2},
	\quad t\in [0,1).
	\end{equation}
	
	It follows from \eqref{tpi0} that $\tpi_0(\la)=0$ if and only if either $\Psi(1)=\Phi(1)=0$ or $\Psi(1)\neq 0$ and $\la=\frac{\Phi(1)}{\Psi(1)}$.
	However, the second case does not occur since we will show that $\frac{\Phi(1)}{\Psi(1)} \in \ov{\Lambda_{[0,1)}(\Delta)}$ which contradicts the assumption that $\la \notin \ov{\Lambda_{[0,1)}(\Delta)}$.  
	Indeed, it is not difficult to see that $\la_+(\Delta(t))$ has the asymptotic behaviour
	\[
	\la_+(\Delta(t)) = \frac{\Psi(1)}{\vartheta(1)}\frac{1}{(1-t)^2} + \o\Bigl(\frac{1}{(1-t)^2}\Bigr), \quad t\nearrow 1,
	\]
	and thus, for the other eigenvalue,
	\[
	\la_-(\Delta(t)) = \frac{\det(\Delta(t))}{\la_{+}(\Delta(t))} = \frac{1}{\la_{+}(\Delta(t))}\frac{\Phi(t)}{\vartheta(t)}\frac{1}{(1-t)^2} = \frac{\Phi(1)}{\Psi(1)} + \o(1),  \quad t\nearrow 1,
	\]	
	which yields that
	\[
	\frac{\Phi(1)}{\Psi(1)}\in \ov{\{\la_-(\Delta(t)):t\in[0,1)\}} \subset \ov{\Lambda_{[0,1)}(\Delta)}.
	\]
	This completes the proof of \eqref{tpi0=0}.

	Now suppose that $\tpi_0(\la)=0$, i.e.\ $\Psi(1)=0$, $\Phi(1)= 0$. Then it follows
 from \eqref{tpi1} that $\tpi_1(\la)=0$ if and only if either $\Psi'(1)=\Phi'(1)=0$ or $\Psi'(1)\neq0$ and $\la=\frac{\Phi'(1)}{\Psi'(1)}$. The latter cannot occur since we will show that $\frac{\Phi'(1)}{\Psi'(1)} \in \ov{\Lambda_{[0,1)}(\Delta)}$ which contradicts the assumption that $\la \notin \ov{\Lambda_{[0,1)}(\Delta)}$. 
Indeed, it is not difficult to see that, if $\Psi(1)=0$, $\Phi(1)= 0$, then $\la_+(\Delta(t))$ has the asymptotic behaviour
	\[
	\la_+(\Delta(t)) = -\frac{\Psi'(1)}{\vartheta(1)}\frac{1}{1-t} + \o\Bigl(\frac{1}{1-t}\Bigr),  \quad t\nearrow 1,
	\]
	and thus, for the other eigenvalue,
	\[
	\la_-(\Delta(t)) = \frac{1}{\la_+(\Delta(t))}\frac{\Phi(t)}{\vartheta(t)}\frac{1}{(1-t)^2} = \frac{\Phi'(1)}{\Psi'(1)} + \o(1),  \quad t\nearrow 1,
	\]
	which yields that
	\[
	\frac{\Phi'(1)}{\Psi'(1)}\in \ov{\{\la_-(\Delta(t)):t\in[0,1)\}} \subset \ov{\Lambda_{[0,1)}(\Delta)}.
	\]
It remains to be noted that
\begin{align}
\label{last1}
\Psi(1)=\Phi(1)=\Psi'(1)=0 \ \implies \ \Phi'(1)=0
\end{align}
by the definition of $\Phi$ in \eqref{Phi} since the three conditions on the left-hand side are equivalent to $(\vartheta \delta_{11}\!-\!\beta_1^2)(1) \!=\! (\vartheta \delta_{12}\!-\!\beta_1\beta_2)(1) \!=\! (\vartheta \delta_{11}\!-\!\beta_1^2)^{\prime}(1) \!=\! 0$.
This completes the proof of \eqref{tpi1=0} and hence of Proposition \ref{lem:pi.moeller}.
\end{proof}

Now we are ready to verify Assumption \ref{ass:suff} resp.\ \ref{ass:suff'}.
The conditions on $\mathcal{R}(\cdot,\la)$ in \eqref{ass:S.Rb} for Case (I) and in \eqref{rem:SII.Rb} for Case (II) are satisfied because $\tpi(\cdot,\la)\in C^1([0,1])$ and hence Taylor's theorem can be applied. In Case (III), we require the additional smoothness  assumptions 
	\begin{align}
	\label{ass:additional_in_III}
	\wt\vartheta,\wt\delta_{11},\wt\delta_{22},\wt\beta_1,\wt\beta_2\in C^2([0,1],\RR),
	\end{align}
which ensure that $\tpi(\cdot,\la)\in C^2([0,1])$ so $\mathcal{R}(\cdot,\la)$ satisfies \eqref{ass:S.Rb} again by Taylor's theorem.

Straightforward calculations yield
\begin{equation}
(D(t)-\la)^{-1}b(t)=\frac{-\ii}{\Xi(t,\la)}
\begin{pmatrix}
\ds(1-t)\bigl((\beta_1\delta_{22}-\beta_2\delta_{12})(t)-\la\beta_1(t)\bigr) \\[2ex]
\ds(\beta_2\delta_{11}-\beta_1\delta_{12})(t)-\la(1-t)^2\beta_2(t)
\end{pmatrix}, \quad t\in [0,1),
\end{equation}
and, because the condition $\tpi_0(\la)=0$ in \eqref{Cases_moeller}, \eqref{Cases_moellerc} implies that $(\beta_2\de_{11}-\beta_1\de_{12})(1)=0$, for
$t\nearrow 1$,
\begin{equation}
\label{moeller:Db}
\|(D(t)-\la)^{-1}b(t)\|^2_{\CC^n} = 
\begin{cases}
\O(1) &\text{in Case (I)},\\
\O((1-t)^2) &\text{in Cases (II), (III)}.
\end{cases}
\end{equation}
Hence the conditions for $\|(D(t)-\la)^{-1}b(t)\|^2_{\CC^n}$ in \eqref{ass:S.r} and \eqref{rem:SII.D} are satisfied.

To verify the conditions for $r(\cdot,\la)$ in \eqref{ass:S.r} and \eqref{rem:SII.r}, observe that the relation between $b$ and $c$, see \eqref{ex.b.coef}, and $D=D^*$ imply
\begin{align*}
r(\cdot,\la) = \Im (b^*(D-\la)^{-1}c) = 
\Im\bigl(b^*(D-\la)^{-1}(\gamma,0)^*\bigr)
= - \Im \big( (\gamma, 0) (D-\la)^{-1} b \big), 
\end{align*}
and thus, by \eqref{moeller:Db},
\begin{align}\label{r_moeller}
r(t,\la) = 
\frac{1-t}{\Xi(t,\la)}\Re\big(\gamma(t)\bigl((\beta_1\delta_{22}-\beta_2{\delta}_{12})(t)-\la\beta_1(t)\bigr)\big)=\O(1-t), \quad t\nearrow 1.
\end{align} 
confirming \eqref{ass:S.r} for Cases (I), (III) and \eqref{rem:SII.r} for Case (II). 

Finally, elementary calculations yield
\begin{equation}\label{kappa.Gamma}
\varkappa(t,\la) = \phi(t) -\la + \frac{1}{2(1-t)}\frac{\partial}{\partial 
	t}\tpi(t,\la) + \frac{3}{4}\frac{\tpi(t,\la)}{(1-t)^2} - 
\frac{|\gamma(t)|^2(\delta_{22}(t)-\la)}{\Xi(t,\la)}(1-t)^2 
-(1-t)\frac{\partial}{\partial t}\Gamma(t,\la)
\end{equation}
for $t\in [0,1)$ where
\begin{align}\label{Gamma}
\Gamma(t,\la) := 
\frac{1}{\Xi(t,\la)}\Im\bigl(\gamma(t)\bigl((\beta_1(\delta_{22}-\la)-\beta_2\delta_{12})(t)\bigr)\bigr).
\end{align}
Once again we use that $\la\notin \Lambda_\beta(D)$ implies that $\bigl(\delta_{11}\delta_{22}-|\delta_{12}|^2-\la\delta_{11}\bigr)(1) \neq 0$ and hence 
\vspace{-1.5mm} $\dfrac 1{\Xi(t,\la)}=\O(1)$, $t\nearrow 1$.
Therefore,
\[
\varkappa(t,\la) = \frac{3}{4}\frac{\tpi(t,\la)}{(1-t)^2} + 
\frac{1}{2} \frac{1}{1-t}\frac{\partial}{\partial t}\tpi(t,\la) + \O(1),  \quad t\nearrow 1,
\]
and thus
\[
s_\pi \varkappa(t,\la) = \begin{cases}
\ds\frac{3}{4}\frac{|\pi_0(\la)|}{(1-t)^2}(1+\o(1)) + \O(1) &\text{in Case 
	(I)},\\
\ds \frac 14 \frac{|\pi_1(\la)|}{1-t} (1+\o(1)) + \O(1) &\text{in Case (II)},\\
\ds \O(1) &\text{in Case (III)}.
\end{cases}
\]
Hence using \eqref{moeller:Db}, the relation between $b$ and $c$, see \eqref{ex.b.coef}, and
\begin{equation}
\left\|
(D(t)-\la)^{-1} 
\begin{pmatrix}
\,\ov{\gamma(t)}\, \\ 0
\end{pmatrix} 
\right\|^2_{\CC^n} 
= 
\O(1), \quad t\nearrow 1,
\end{equation}
we obtain that \eqref{ass:S.kappac} is satisfied in Cases (I), (III) and \eqref{rem:SII.kappac} is satisfied in Case (II).

\subsubsection*{Essential spectrum in Cases {\upshape(I)} and {\upshape(II)}}

By what was shown above, we can apply Theorem \ref{prop:suff} \ref{prop:suff.I} which yields that, in Cases (I) and (II), 
\[
\sesssing(\cA) \setminus \Lambda_\beta(D) = \emptyset,
\]
and hence, by Theorem \ref{thm:ess.abs},   
\begin{align*}
\sess(\wt\cA) \setminus \Lambda_{\beta}(D) = \sess(\cA) \setminus \Lambda_{\beta}(D) = \sessreg(\cA) \setminus \Lambda_{\beta}(D) = \ov{\Lambda_{[0,1)}(\Delta)} \setminus \Lambda_{\beta}(D) = \ov{\Lambda_{(0,1]}(\wt\Delta)}\setminus \Lambda_{\beta}(D),
\end{align*}
where $\wt\Delta$ is given by \eqref{moeller:Delta-tilde}.

\smallskip

\subsubsection*{Essential spectrum in Case {\upshape(III)}}

In this case, we first note that the additional smoothness assumptions \eqref{ass:additional_in_III} ensure
that Assumption \ref{ass:pres.reg} is satisfied. Next, we analyse the limits in Theorem \ref{prop:suff}~\ref{prop:suff.III}.
\begin{lemma}\label{lem:moeller_limits}
Let $\la \in \RR \setminus(\sessreg(\cA) \cup \Lambda_\beta(D))$. Then, in Case {\rm (III)}, the limits in \eqref{suff.lim.pi.r.kappa} exist and, 
using \linebreak $\la_{1,1} = \de_{22}(1) - \dfrac{|\de_{12}(1)|^2}{\de_{11}(1)}$  given in \vspace{-2mm}  \eqref{la_2,2}, 
\begin{align}
\label{tpi2lim}
\tpi_2(\la)  
&= \frac{2\vartheta(1)^2\la^2-\vartheta(1)\Psi''(1)\la+\bigl(\vartheta\Phi\bigr)''(1)}{2\beta_1(1)^2(\la_{1,1}-\la)} \in \RR \setminus \{0\}, 
\\
r_1(\la) &= -\frac{\beta_1(1)}{\delta_{11}(1)}\Re \gamma(1) \in \RR,\\
\label{ka0lim}
\varkappa_0(\la) &=  \phi(1) -\la -\frac{1}{4}\tpi_2(\la) \in \RR.
\end{align}
\end{lemma}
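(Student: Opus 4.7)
The plan is to compute each of the three limits by direct Taylor expansion at $t=1$ of the explicit rational representations \eqref{pi_moeller}, \eqref{r_moeller}, and \eqref{kappa.Gamma}, exploiting the fact that $\la \notin \Lambda_\beta(D) = \{\la_{1,1}\}$ ensures $\Xi(1,\la) = \delta_{11}(1)(\la_{1,1} - \la) \neq 0$, so all denominators are regular at $t=1$. The Case~(III) characterization \eqref{Cases_moellerc} together with the implication \eqref{last1} yields the key vanishings $\Psi(1)=\Phi(1)=\Psi'(1)=\Phi'(1)=0$ and the identities $\vartheta(1)\delta_{11}(1) = \beta_1(1)^2$ and $\vartheta(1)\delta_{12}(1) = \beta_1(1)\beta_2(1) \in \RR$, which will be invoked repeatedly.

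For $\tpi_2(\la)$: write $\tpi(t,\la)/(1-t)^2 = \Theta(t,\la)/((1-t)^2\Xi(t,\la))$; since $\Xi(t,\la) \to \Xi(1,\la) \neq 0$ while $\Theta(t,\la)$ and $\partial_t\Theta(t,\la)$ both vanish at $t=1$, two applications of L'H\^opital's rule give $\tpi_2(\la) = \partial_t^2\Theta(1,\la)/(2\Xi(1,\la))$. A routine differentiation of $\Theta(t,\la) = \vartheta(t)(1-t)^2\la^2 - \Psi(t)\la + \Phi(t)$ yields $\partial_t^2\Theta(1,\la) = 2\vartheta(1)\la^2 - \Psi''(1)\la + \Phi''(1)$; multiplying numerator and denominator by $\vartheta(1)$ and using $\vartheta(1)\delta_{11}(1) = \beta_1(1)^2$ together with $\vartheta(1)\Phi''(1) = (\vartheta\Phi)''(1)$ (valid since $\Phi(1) = \Phi'(1) = 0$) produces \eqref{tpi2lim}. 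To see $\tpi_2(\la) \neq 0$, note that the numerator equals $2\vartheta(1)^2 \lim_{t\nearrow 1}\det(\Delta(t) - \la)$; the roots in $\la$ of this limiting characteristic polynomial are precisely the finite limits of the eigenvalues $\la_\pm(\Delta(t))$, which lie in $\ov{\Lambda_{[0,1)}(\Delta)} = \sessreg(\cA)$ and thus differ from the chosen $\la$.

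For $r_1(\la)$: applying the product rule to $r(t,\la) = (1-t)R(t,\la)/\Xi(t,\la)$, where $R(t,\la) := \Re\bigl(\gamma(t)[(\beta_1\delta_{22} - \beta_2\delta_{12})(t) - \la\beta_1(t)]\bigr)$, and letting $t \nearrow 1$, only the derivative of the explicit prefactor $(1-t)$ contributes, yielding $r_1(\la) = -R(1,\la)/\Xi(1,\la)$. The main obstacle is showing that this quantity is $\la$-independent, i.e.\ that $R(1,\la)$ factors as $\beta_1(1)(\la_{1,1}-\la)\Re\gamma(1)$. This requires the reality of $\delta_{12}(1)$ from Case~(III), which gives $\Re(\gamma(1)\delta_{12}(1)) = \delta_{12}(1)\Re\gamma(1)$, together with the identity $\la_{1,1} = \delta_{22}(1) - \beta_2(1)^2/\vartheta(1)$ obtained by inserting $\delta_{11}(1) = \beta_1(1)^2/\vartheta(1)$ and $|\delta_{12}(1)|^2 = \beta_1(1)^2\beta_2(1)^2/\vartheta(1)^2$ into formula \eqref{la_2,2} for $\la_{1,1}$. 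The factor $(\la_{1,1}-\la)$ then cancels against the corresponding one in $\Xi(1,\la)$, leaving $r_1(\la) = -\beta_1(1)\Re\gamma(1)/\delta_{11}(1)$.

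For $\varkappa_0(\la)$: we evaluate \eqref{kappa.Gamma} term by term as $t \nearrow 1$. The contributions $\phi(t) - \la \to \phi(1) - \la$ and $\frac{3}{4}\tpi(t,\la)/(1-t)^2 \to \frac{3}{4}\tpi_2(\la)$ are immediate. For $\frac{1}{2(1-t)}\partial_t\tpi(t,\la)$, the $C^2$-smoothness of $\tpi(\cdot,\la)$ from Assumption~\ref{ass:pres.reg} together with $\tpi(1,\la) = \partial_t\tpi(1,\la) = 0$ and $\frac{1}{2}\partial_t^2\tpi(1,\la) = \tpi_2(\la)$ gives $\partial_t\tpi(t,\la) = -2\tpi_2(\la)(1-t) + \o(1-t)$, so the limit is $-\tpi_2(\la)$. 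The term $(1-t)^2|\gamma|^2(\delta_{22} - \la)/\Xi$ vanishes since $\Xi(1,\la) \neq 0$, and $(1-t)\partial_t\Gamma(t,\la) \to 0$ because $\Gamma(\cdot,\la)$ is $C^1$ in a left-neighbourhood of $t=1$ (again thanks to $\Xi(1,\la) \neq 0$). Summing yields $\varkappa_0(\la) = \phi(1) - \la - \tpi_2(\la) + \frac{3}{4}\tpi_2(\la) = \phi(1) - \la - \frac{1}{4}\tpi_2(\la)$, as claimed.
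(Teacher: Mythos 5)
Your proposal is correct and follows essentially the same route as the paper's proof: Taylor/L'H\^opital expansion of $\Theta(\cdot,\la)/\Xi(\cdot,\la)$ at $t=1$ using $\Theta(1,\la)=\partial_t\Theta(1,\la)=0$ (from $\Psi(1)=\Psi'(1)=\Phi(1)=\Phi'(1)=0$) and $\Xi(1,\la)=\delta_{11}(1)(\la_{1,1}-\la)\neq0$, the factorization $(\beta_1\delta_{22}-\beta_2\delta_{12}-\la\beta_1)(1)=\frac{\beta_1(1)}{\delta_{11}(1)}\Xi(1,\la)$ to make $r_1$ independent of $\la$, and term-by-term evaluation of \eqref{kappa.Gamma} for $\varkappa_0$. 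The only point to tighten is that you compute $\lim_{t\nearrow1}\tpi(t,\la)/(1-t)^2$, whereas \eqref{suff.lim.pi.r.kappa} defines $\tpi_2(\la)$ through $\lim_{t\nearrow1}\frac{\partial^2}{\partial t^2}\tpi(t,\la)$, which is the limit whose existence the lemma actually asserts; since L'H\^opital only passes from the second limit to the first, you should (as the paper does) first verify $\lim_{t\nearrow1}\frac{\partial^2}{\partial t^2}\bigl(\Theta(t,\la)/\Xi(t,\la)\bigr)=\partial_t^2\Theta(1,\la)/\Xi(1,\la)$ directly from the quotient rule together with the vanishing of $\Theta(1,\la)$ and $\partial_t\Theta(1,\la)$, and only then deduce the equality with $\lim_{t\nearrow1}\tpi(t,\la)/(1-t)^2$.
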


\begin{proof}
Let $\la\notin \sessreg(\cA)\cup \Lambda_{\beta}(D) = \ov{\Lambda_{[0,1)}(\Delta)}\cup \Lambda_{\beta}(D)$, where $\Delta$ is as in \eqref{moeller:Delta-tilde}. Throughout this proof we use that in Case (III) we have $\Psi(1)\!=\!\Psi'(1)\!=\!\Phi(1)=0$ and hence $\Phi'(1)\!=\!0$, see~\eqref{last1}; note that this implies $\Theta(1,\la)=0$, $\Theta'(1,\la)=0$, see \eqref{pi_moellera}.

Due to the additional smoothness assumptions \eqref{ass:additional_in_III}, the function $\Theta(\cdot,\la)$ defined in \eqref{pi_moellera} belongs to $C^2([0,1],\RR)$. Using \eqref{pi_moellera} and $\Phi(1)=\Phi'(1)=0$, we find that the following limit exists and satisfies
\[
\lim_{t\nearrow 1}\frac{\partial^2}{\partial t^2}\Theta(t,\la) 
= 2\vartheta(1)\la^2 - \Psi''(1)\la + \Phi''(1)
= 2\vartheta(1)\la^2 - \Psi''(1)\la + \frac{\bigl(\vartheta\Phi\bigr)''(1)}{\vartheta(1)} \in \RR.
\]
Since $\Theta(1,\la)=0$, $\Theta'(1,\la)=0$ and $\Xi(1,\la)\neq 0$ because $\la\notin\Lambda_\beta(D)$, it is not difficult to see that also the following limit exists and satisfies
\begin{align}\label{pi2.moeller}
\lim_{t\nearrow 1}\frac{\partial^2}{\partial t^2}\tpi(t,\la) = \lim_{t\nearrow 1}\frac{\partial^2}{\partial t^2}\frac{\Theta(t,\la)}{\Xi(t,\la)} = \lim_{t\nearrow 1}\frac{1}{\Xi(t,\la)}\frac{\partial^2}{\partial t^2}\Theta(t,\la) = \frac{2\vartheta(1)^2\la^2-\vartheta(1)\Psi''(1)\la+\bigl(\vartheta\Phi\bigr)''(1)}{\beta_1(1)^2(\la_{1,1}-\la)} \in \RR.
\end{align}
Moreover, L'H\^opital's rule yields 
\begin{align}\label{lim:tpi/(1-t)^2}
\lim_{t\nearrow 1}\frac{\tpi(t,\la)}{(1-t)^2} = \frac{1}{2} \lim_{t\nearrow 1}\frac{\partial^2}{\partial t^2}\tpi(t,\la) = \tpi_2(\la)
\end{align}
and thus Lemma \ref{lem:pidet}, together with \eqref{asymp:la_1}, implies that
\[
\lim_{t\nearrow 1}\det(\Delta(t)-\la) = \vartheta(1)\lim_{t\nearrow 1}\frac{\tpi(t,\la)}{(1-t)^2} \lim_{t\nearrow 1} \big( (1-t)^2\det(D(t)-\la) \big) = \tpi_2(\la)\vartheta(1)\delta_{11}(1)(\la_{1,1}-\la).
\]
Hence $\tpi_2(\la) \neq 0$, for otherwise $\lim_{t\nearrow 1}\det(\Delta(t)-\la)=0$, i.e.~$\la\in\ov{\Lambda_{[0,1)}(\Delta)}$, a contradiction to our assumption on $\la$.
Since $\Xi(1,\la) \neq 0$, it is easy to see from \eqref{r_moeller} that the following limit exists and satisfies
\begin{align*}
\lim_{t\nearrow 1}\frac{\partial}{\partial t}r(t,\la) 
= 
- \Re\Bigg\{
\lim_{t\nearrow 1}\frac{\gamma(t)\bigl(\beta_1(\delta_{22}-\la)-\beta_2\delta_{12}\bigr)(t)}{\Xi(t,\la)}
\Bigg\} 
= 
- \frac{\beta_1(1)}{\delta_{11}(1)}\Re \gamma(1) \in \RR;
\end{align*}
here, for the second equality, we used the relation
\begin{align}\label{csq1}
\bigl(\beta_1 \delta_{22}- \beta_2 \delta_{12}-\la \beta_1\bigr)(1)
= 
\frac{\beta_1(1)}{\delta_{11}(1)}\Xi(1,\la),
\end{align}
which is a simple consequence of the first two conditions of Case (III). 

The existence of the limit $\varkappa_0(\la)=\lim_{t\nearrow 1}\varkappa(t,\la)$ and the claimed formula for $\varkappa_0(\la)$ follow from \eqref{kappa.Gamma}, \eqref{Gamma}, the fact that $\Gamma(\cdot,\la) \in C^1([0,1])$, $\Xi(1,\la) \neq 0$, and \eqref{lim:tpi/(1-t)^2}. 
\end{proof}

Lemma \ref{lem:moeller_limits} guarantees that we can apply Theorem \ref{prop:suff} \ref{prop:suff.III} to calculate the singular part of the essential spectrum of any closed symmetric extension $\cA$ of the  operator $\cA_0$. For  $\la\notin\ov{\Lambda_{[0,1)}(\Delta)} \cup \Lambda_\beta(D)$,
\begin{align}
\label{last_ineq_0}  
\la \in \sesssing(\cA) \ 
\iff \ r_1(\la)^2-\varkappa_0(\la)\tpi_2(\la) \geq \frac{1}{4}\tpi_2(\la)^2 \iff \ (\la_{1,1}-\la)P(\la) \geq 0,
\end{align}
where
\begin{equation}
\begin{aligned}
P(\la) :=&  (\la-\phi(1))\Bigg(\la^2-\frac{1}{2}\frac{\Psi''(1)}{\vartheta(1)}\la+\frac{1}{2}\frac{(\vartheta\Phi)''(1)}{\vartheta(1)^2}\Bigg) -\bigl(\Re\gamma(1)\bigr)^2(\la-\la_{1,1})\\
\label{P(la)}
       =& (\la-\wt\phi(0))(\la^2-K_1\la+K_2) - \bigl(\Re\wt\gamma(0)\bigr)^2(\la-\la_{1,1})
\end{aligned}
\end{equation}
\vspace{-0.3mm}with
\begin{equation}
\begin{aligned}
\label{last4}
K_1 :=& \frac{1}{2\wt\vartheta(0)}(\wt\vartheta\wt\delta_{11}-\wt\beta_1^2)''(0) + \frac{1}{\wt\vartheta(0)}(\wt\vartheta\wt\delta_{22}-\wt\beta_2^2)(0),\\
K_2 :=& \frac{1}{2\wt\vartheta(0)^2}(\wt\vartheta\wt\delta_{11}-\wt\beta_1^2)''(0)(\wt\vartheta\wt\delta_{22}-\wt\beta_2^2)(0) - \frac{1}{\wt\vartheta(0)^2}|(\wt\vartheta\wt\delta_{12}-\wt\beta_1\wt\beta_2)'(0)|^2.
\end{aligned}
\end{equation}
Here we have used that $\wt\vartheta(0) \wt\de_{11}(0) = \wt\beta_1(0)^2$ by the first condition in Case (III).

Next we show that $\Lambda_\beta(D) = \{\la_{1,1}\}\in \sess(\cA)$. To this end, we consider the following possible cases:

\underline{Case 1:} Either $\la_{1,1}=\wt\phi(0)$, or $\la_{1,1}\neq\wt\phi(0)$ and $\la_{1,1}^2-K_1\la_{1,1}+K_2 \neq 0$.
We show that $\la_{1,1}$ is a limit point of the solution set $\Sigma$ of the inequality \eqref{last_ineq_0} and thus $\la_{1,1} \in \ov{\Sigma} \subset \sess(\cA)$, see Theorem \ref{singstr}. First assume $\la_{1,1}=\wt\phi(0)$. Then, since $\la\notin\Lambda_\beta(D) = \{\la_{1,1}\}$, the last inequality in \eqref{last_ineq_0} 
takes the~form 
\begin{align}
\label{last_ineq_1}
\la^2-K_1\la+K_2\leq \bigl(\Re\wt\gamma(0)\bigr)^2.
\end{align}
On the other hand, 
the first two conditions in Case (III) imply 
\begin{equation}
\label{last2}
\wt\vartheta(0) = \dfrac{\wt \beta_1(0)\wt \beta_2(0)}{\wt\de_{12}(0)}, \quad
\dfrac{\wt \beta_1(0)}{\wt \beta_2(0)}=\dfrac{\wt\de_{12}(0)}{\wt\de_{11}(0)}, \quad 
\la_{1,1} = \wt \de_{22}(0) - \frac{\wt\beta_2(0)^2}{\wt\vartheta(0)}.
\end{equation}
Hence it follows that
\begin{equation}
\label{last3}
\wt\vartheta(0)^2(\la_{1,1}^2-K_1\la_{1,1}+K_2) = - |(\wt\vartheta\wt\delta_{12}-\wt\beta_1\wt\beta_2)'(0)|^2 \leq0 \leq \bigl(\Re\wt\gamma(0)\bigr)^2,
\end{equation}
which shows that $\la_{1,1}$ satisfies \eqref{last_ineq_1} and so $\la_{1,1} \in \ov{\Sigma} \subset \sess(\cA)$. Secondly, assume that $\la_{1,1}\neq\wt\phi(0)$ and $\la_{1,1}^2-K_1\la_{1,1}+K_2 \neq 0$. 
Since $\la_{1,1}^2-K_1\la_{1,1}+K_2<0$ by \eqref{last3},
we find
\begin{align*}
	\lim_{\la\nearrow \la_{1,1}}(\la_{1,1}-\la)P(\la) = \infty \quad \text{if} \quad \la_{1,1}<\wt\phi(0), \qquad  \lim_{\la\searrow \la_{1,1}}(\la_{1,1}-\la)P(\la) = \infty \quad \text{if} \quad \la_{1,1}>\wt\phi(0).
\end{align*}
Therefore, in both cases, $\la_{1,1} \in \ov{\Sigma} \subset \sess(\cA)$.

\underline{Case 2:} $\la_{1,1}\neq\wt\phi(0)$ and $\la_{1,1}^2-K_1\la_{1,1}+K_2=0$. By \eqref{last3}, the latter yields that $(\wt\vartheta\wt\delta_{12}-\wt\beta_1\wt\beta_2)'(0)=0$. Thus it follows from L'H\^opital's rule that the matrix function $\wt\Delta$ given by \eqref{moeller:Delta-tilde} has a finite (componentwise) limit  as $t\searrow 0$. 
Using the formula for $\wt\Delta$ in \eqref{moeller:Delta-tilde}, the definition of $K_1$ in \eqref{last4}, and \eqref{last3}, we find
\begin{align*}
\ds\lim_{t\searrow 0}\wt\Delta(t) = 
\begin{pmatrix}
K_1-\la_{1,1} & 0 \\[1ex]
0             & \la_{1,1}
\end{pmatrix}.
\end{align*}
Hence $\la_{1,1} \in \ov{\Lambda_{(0,1]}(\wt\Delta)} = \sessreg(\cA)\subset \sess(\cA)$.

Altogether, we have now shown that 
\begin{equation}
\label{moeller:conclusion}
\sess(\wt\cA) = \sess(\cA) = \ov{\Lambda_{(0,1]}(\wt\Delta)}\, \cup \{\la\in\RR: \, (\la_{1,1}-\la)P(\la)\geq0\}, 
\quad \la_{1,1}=\wt\delta_{22}(0)-\frac{|\wt\delta_{12}(0)|^2}{\wt\delta_{11}(0)},
\end{equation}
where $P$ is the cubic polynomial given by \eqref{P(la)}.

\subsubsection*{The structure of $\sesssing(\cA)$ in Case {\rm (III)}} 
In order to analyse the structure of the singular part of the essential spectrum, we use our abstract Theorem \ref{singstr}. To this end, we need to verify Assumption (T) and compute some of the coefficients therein and in Proposition \ref{eshmat1}.

By what was shown above, the first part of Assumption (T1) holds with $j_0=1$; further, the estimate \eqref{ass:max.min} holds trivially since $j_0=1$ and $n=2$. By Lemma \ref{lem:moeller_limits}, Assumptions (T2), (T3) are satisfied and hence Proposition \ref{eshmat1} applies. Now the formulas \eqref{tpi2lim} for $\tpi_2(\la)$ and \eqref{ka0lim} for $\kappa_0(\la)$ in Lemma \ref{lem:moeller_limits} yield that
	\[
	g_\beta = \lim_{\la\to \infty} \frac {-\tpi_2(\la)}{\la} = \frac{\vartheta(1)^2}{\beta_1(1)^2} > 0,
	\quad
	\psi_\beta = 1 - \frac 14 g_\beta, \quad g_\beta+ 4\psi_\beta = 4 > 0.  
	\]
This means we are in the first case of Theorem \ref{singstr} (a) and hence the set $\ov{\Sigma}$ consists of the union of at most two compact intervals.
	
Since  $\wt\Delta \in C^1((0,1],\CC^{2\times 2})$ has a limit as $t\searrow 0$, the closure of the range of its eigenvalues $\ov{\Lambda_{[0,1)}(\Delta)} \!\subset\! \RR$ has at most two components; hence also the regular part of the essential spectrum is the union of at most two compact intervals.

Moreover, it is not difficult to see that, in Case (III), the eigenvalues $\la_{\pm}(\wt\Delta(t))$ of $\wt\Delta(t)$ have the asymptotic expansions
	\begin{align}
	\label{moeller:asymp.Delta.III}
	\la_+(\wt\Delta(t)) = \frac{K_1}{2}+\sqrt{\frac{K_1^2}{4}-K_2} + \o(1), \quad \la_-(\wt\Delta(t)) = \frac{2K_2}{K_1+\sqrt{K_1^2-4K_2}} + \o(1), \quad t\searrow 0.
	\end{align}
Hence both limits $\lim_{t\searrow 0}\la_{\pm}(\wt\Delta(t))$ satisfy the inequality $(\la_{1,1}-\la)P(\la)\geq0$ and thus belong to $\sessreg(\cA)$ and ~$\ov{\Sigma}$.
Altogether, we conclude that $\sess(\wt\cA)$ is union of at most two compact intervals.

\subsubsection*{Essential spectral radius}

In Cases (I) and (II), the functions $\la_{\pm}(\Delta(\cdot))$ in \eqref{lapmDelta} are not bounded and hence $r_{\rm{ess}}(\wt\cA)\!=\! \infty$. In Case (III), $r_{\rm{ess}}(\wt\cA)\!<\! \infty$ since we have shown that $\sess(\wt\cA)$ is the union of at most two com\-pact intervals. 
An explicit formula for $r_{\rm{ess}}(\wt\cA)$ may be given by finding the root with largest absolute value of the cubic polynomial $P$ in \eqref{P(la)}, e.g.\ by means of Cardano's formula; we refrain from displaying the  elementary, but too lengthy, formulas here.

\begin{remark}
Our abstract results give a new proof for the results of the paper of \cite{MNT02} and of the observation noted therein that the regular and singular part of the essential spectrum are adjoined to each other.
\end{remark}


\section{Application to a spectral problem for symmetric stellar equilibrium models}

In this section, we investigate a matrix differential operator arising in the stability analysis of spherically symmetric stellar equilibrium models, see \cite[Section 4.1]{BEY95} and \cite{ILLT13}. 
This operator represents the unperturbed part of the reduced spheroidal operator in the radial variable $t\in (0,R)$, where $R>1$ is the radius of the star, for polytropic equilibrium models with constant adiabatic index near the centre $(t=0)$ and near the boundary $(t=R)$ of the star. 

Since the coefficient functions have singularities at both end-points $t=0$ and $t=R$, Glazman's decomposition principle was used in \cite{ILLT13} to split the essential spectrum into the essential spectra of the corresponding operators $\cA_{(0,1]}$ and $\cA_{[1,R)}$ restricted to the intervals $(0,1]$ and $[1,R)$. It was proved in  \cite{ILLT13} that, for both operators, the regular part of the essential spectrum is only the single point $\{0\}$
and that the singular part of the essential spectrum of the operator on $(0,1]$ is~empty,
\[
 \sessreg(\cA_{(0,1]}) = \sessreg(\cA_{[1,R)}) = \{0\}, \quad \sesssing(\cA_{(0,1]})=\emptyset.
\]

However, the method of \cite{ILLT13} could not be used to determine the singular part of the essential spectrum of the operator on $[1,R
)$. The reason for this is that the first derivative of the Lane-Emden function $\theta=\theta_n$ entering the coefficient functions, see \eqref{reallylast}, \eqref{prho} below, does not vanish at $R$ and hence the coefficients of the Schur complement are not bounded at~$R$. It was  conjectured in \cite{ILLT13} that, nevertheless, the singular part of the essential spectrum on $[1,R)$ is empty as well.

We prove this conjecture and thus show that the essential spectrum of every self-adjoint extension of the operator on the full interval $(0,R)$ consists only of the single point $\{0\}$.

\subsection*{Example from astrophysics}

In the Hilbert space $L^2((1,R)) \oplus L^2((1,R))$ we
consider the operator  matrix
\begin{align}\label{A_0_astro}
 \cA_0 = \begin{pmatrix} -\dt p_1 \dt + q_1 &  \ds \dt p_2+q_2 \\[1.8ex]
  \ds -p_2\dt+q_2                               &   p_3 \end{pmatrix}
\end{align}
with domain $\cD(\cA_0)=C_0^{2}((1,R)) \oplus C_0^{1}((1,R))$.
Using the notation of \cite{BEY95}, 
the coefficient functions $p_1, p_2,$ $p_3, q_2 \in C^1([1,R], \RR)$, and $q_1 \in C([1,R], \RR)$  
are given by
\begin{equation}
\label{reallylast}
\begin{array}{l}
\displaystyle p_1:= \frac{\Gamma_1 p}{\varrho}, \quad p_2:= c_l \frac{\Gamma_1p}{t\varrho}, \quad p_3:=c_l^2\frac{\Gamma_1p}{t^2\varrho},\\[-4mm]
\displaystyle q_1:= \frac{1}{t\varrho}\bigl((4-3\Gamma_1)p\bigr)'+\frac{1}{t^2\sqrt{\varrho}}\Bigl(\frac{\Gamma_1p}{\varrho}(t^2\sqrt{\varrho})'\Bigr)', 
\quad q_2:= \overbrace{c_l\frac{\Gamma_1p}{t\varrho}}^{=p_2}\Bigl(\frac{\varrho'}{\varrho}-\frac{p'}{\Gamma_1p}-\frac{1}{2}\frac{(t^2\varrho)'}{t^2\varrho}\Bigr),
\end{array}
\end{equation}
where the constant $c_l=\sqrt{l(l+1)}$, $l \in \N$, appears after the reduction of the problem in $\RR^3$ to the radial part 
and the coefficient functions $\Gamma_1$, $p$, and $\rho$ represent the following physical quantities. 
%
The function $\Gamma_1\!\in\! C^{2}([1,R],\RR)$ is the adiabatic index which is positive on $[1,R]$ and satisfies $\Gamma_1'(R)=0$.
The functions $p\in C^2([1,R],\RR)$ and $\varrho\in C^3([1,R],\RR)$ are the pressure and mass density, respectively. They are both positive on $[1,R]$ and are supposed to have the~forms
\begin{align}
\label{prho}
p(t)=p_c \theta_n(t)^{n+1},\quad \varrho(t)=\varrho_c \theta_n(t)^n, \quad t\in [1,R],
\end{align}
where $p_c$, $\varrho_c >0$ are the constant central pressure and central density, respectively, of the unperturbed star and 
$n \in (0,5)$ is the polytropic index; here the physically most interesting case is $n \in [1,5)$, see \cite[Section~5, p.\ 47]{BEY95}.
The function $\theta_n \in C^2([1,R],\RR)$ is the Lane-Emden function of index $n$ which is uniformly positive on $[1,R)$ and satisfies the non-linear differential equation 
\begin{align}
\label{L-M1}
\theta''_n(t)+\frac{2}{t}\theta'_n(t)=-\frac 1{\alpha_n^2} (\theta_n(t))^n, \quad t\in (0,\infty),
\end{align}  
where $\alpha_n\in (0,\infty)$ is the Lane--Emden unit length and $R=R_n >1$ is the first zero of $\theta_n$, see~\cite{BEY95,CHAN57}.

\subsubsection*{Transformation to the form \eqref{A02} and verification of 
Assumption \ref{ass:regularity}}

Note that \eqref{A_0_astro} is already of the form \eqref{A02} with $\alpha=1$, $\beta=R$, $n=1$, $p=p_1$, $q=q_1$, $b=-p_2$, $c=q_2$, and $D=p_3$. Assumption ~\ref{ass:regularity} is clearly satisfied due to the smoothness assumptions on the coefficient functions $p_1$, $p_2$, $p_3$, and $q_1$,~$q_2$.

\subsubsection*{The set $\Lambda_\beta(D)$} Since $R$ is the first zero of $\theta_{n}$, \eqref{prho} yields
\[
\lim_{t\nearrow R}D(t) = \lim_{t\nearrow R} p_3(t) = \frac{c_l^2}{R^2}\frac{p_c}{\varrho_c}\Gamma_1(R)\lim_{t\nearrow R}\theta_n(t) =0
\]
and thus $\Lambda_\beta(D) = \{0\}$. Note that, since $p_1p_3 \equiv p_2^2$ on $[1,R]$, we have 
$\Delta(t) = 0$, $t\in [1,R)$, and hence
$\sessreg(\cA) = \{0\} =\Lambda_\beta(D)$ for every closed symmetric extension $\cA$ of $\cA_0$ in $L^2((1,R)) \oplus L^2((1,R))$.

\subsubsection*{Verification of Assumption \ref{ass:suff} resp.\ {\rm (S')}}

Elementary calculations show that $\tpi(\cdot,\la)$ has an asymptotic expansion \eqref{pi_asymp} as $t\nearrow R$  with 
\[
 \tpi_0(\la) = 0, \quad \tpi_1(\la) = \frac{p_c}{\varrho_c}\Gamma_1(R)\theta'(R), \quad \la\in \RR \setminus \{0\}.
\]  

\begin{lemma}
\label{lemma_astro_1}
The Lane-Emden function $\theta_n$ satisfies 
\begin{align}
\label{astro:lemma}
\lim_{t\nearrow R}(t-R)\frac{\theta_n'(t)}{\theta_n(t)} = 1.
\end{align}
\end{lemma}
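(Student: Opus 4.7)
The plan is a clean two-step reduction. First, observe that once one knows $\theta_n'(R) \neq 0$, the claim follows immediately from L'Hospital's rule: since $\theta_n\in C^2([1,R])$ with $\theta_n(R)=0$, we have
\[
\lim_{t\nearrow R}\frac{t-R}{\theta_n(t)} \;=\; \frac{1}{\theta_n'(R)},
\]
and multiplying by the limit $\theta_n'(t)\to \theta_n'(R)$ yields $\lim_{t\nearrow R}(t-R)\theta_n'(t)/\theta_n(t)=1$.

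Thus the whole task reduces to establishing $\theta_n'(R)\neq 0$; in fact I would prove the stronger statement $\theta_n'(R)<0$. To this end, rewrite equation \eqref{L-M1} in the Sturmian (divergence) form
\[
\bigl(t^2\,\theta_n'(t)\bigr)' \;=\; -\frac{t^2}{\alpha_n^2}\,\theta_n(t)^n,\qquad t\in (0,\infty).
\]
Since $\theta_n>0$ on $[1,R)$ by hypothesis, the right-hand side is strictly negative on $[1,R)$, so $t\mapsto t^2\theta_n'(t)$ is strictly decreasing on $[1,R]$. The mean value theorem applied to $\theta_n$ on $[1,R]$, using $\theta_n(1)>0$ and $\theta_n(R)=0$, produces some $\xi\in (1,R)$ with $\theta_n'(\xi)=-\theta_n(1)/(R-1)<0$. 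Strict monotonicity then gives $R^2\theta_n'(R)<\xi^2\theta_n'(\xi)<0$, whence $\theta_n'(R)<0$, as desired.

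The main (and only real) obstacle is that non-vanishing of $\theta_n'(R)$ is not obvious a priori, and would indeed fail for nonlinear ODEs in which $\theta_n$ were to touch zero tangentially; the argument above exploits the specific Sturmian structure of \eqref{L-M1} to rule this out. Apart from this, the proof is a one-line application of L'Hospital's rule, so no further technical difficulty is expected.
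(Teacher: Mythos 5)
Your proof is correct, and for the crucial step it takes a genuinely different route from the paper. Both arguments reduce the lemma to showing $\theta_n'(R)\neq 0$ and then finish with an essentially trivial expansion at $R$ (you use L'Hospital, the paper uses Taylor's formula with Lagrange remainder — these are interchangeable). The difference lies in how the non-vanishing of $\theta_n'(R)$ is obtained: the paper argues by contradiction, invoking uniqueness of solutions of the ODE \eqref{L-M1} with data $\theta_n(R)=\theta_n'(R)=0$ to force $\theta_n\equiv 0$; you instead exploit the Sturmian form $\bigl(t^2\theta_n'\bigr)'=-t^2\theta_n^n/\alpha_n^2<0$ on $[1,R)$ together with the mean value theorem to conclude directly that $R^2\theta_n'(R)<\xi^2\theta_n'(\xi)<0$. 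Your route has two advantages: it yields the strict sign $\theta_n'(R)<0$ at once (which the paper needs anyway immediately after the lemma, and there has to extract it from the limit \eqref{astro:lemma} combined with positivity of $\theta_n$), and it avoids any appeal to a uniqueness theorem — a point worth caring about here, since the nonlinearity $\theta\mapsto\theta^n$ is only Lipschitz near $\theta=0$ for $n\geq 1$, so the paper's uniqueness argument as literally stated ("continuous coefficients") is slightly loose for polytropic indices $n\in(0,1)$. Your monotonicity argument is insensitive to this and works for all $n\in(0,5)$.
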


\begin{proof}
First of all, note that $\theta_n'(R) \neq 0$, for otherwise, since $\theta_n(R)=0$, the theorem on the uniqueness of solution of ODEs with continuous coefficients on closed intervals would imply that $\theta_n(t) \equiv 0$ on $[1,R]$, contradicting to uniform positivity of $\theta_{n}$ on $[1,R)$. Taylor's formula with remainder term of Lagrange form yields, for some $\gamma\in (0,1)$,
\[
\theta_n(t) = \theta_n(R) + (t-R)\theta_n'(R) + \frac{(t-R)^2}{2}\theta_n''(R+\gamma(t-R)).
\]
Since $\theta_n(R)=0$, we obtain \eqref{astro:lemma}.
\end{proof}

Note that, since $\theta_n$ is uniformly positive, the above lemma implies $\theta_n'(R)<0$ and thus $\tpi_1(\la)<0$. So we are in Case (II) and it suffices to verify Assumption (S'), i.e.\ conditions \eqref{rem:SII.Rb}--\eqref{rem:SII.kappac}, see Remark \ref{rem:SII}. 

Since $\tpi(\cdot,\la)\in C^1([1,R])$, the condition for $\mathcal{R}(\cdot,\la)$ in \eqref{rem:SII.Rb} is satisfied by Taylor's theorem. Moreover, $r(\cdot,\la) = -\Im (p_2q_2/(p_3-\la)) \equiv 0$ as all coefficients functions are real-valued and so \eqref{rem:SII.r} is trivially~satisfied. 

Elementary calculations, together with Lemma \ref{lemma_astro_1}, yield
\[
\|(D(t)-\la)^{-1}b(t)\|^2 = \Bigl|\frac{p_2(t)}{p_3(t)-\la}\Bigr|^2 = \frac{c_l^2}{\la^2}\frac{p_c^2}{\varrho_c^2}\frac{\Gamma_1(R)^2}{R^2}\theta_n'(R)^2 (R-t)^2 + \o((R-t)^2), \quad t\nearrow R,
\]
and also
\[
\frac{q_2(t)}{p_2(t)} = \Bigl(\frac{n}{2}-\frac{n+1}{\Gamma_1(R)}\Bigr)\frac{1}{t-R} + \o\Bigl(\frac{1}{t-R}\Bigr),  \quad t\nearrow R.
\]
Hence
\[
\|(D(t)-\la)^{-1}c(t)\|^2 = \Bigl|\frac{q_2(t)}{p_2(t)}\Bigr|^2 \|(D(t)-\la)^{-1}b(t)\|^2 = \O(1), \quad t\nearrow R.
\]
Moreover, 
\[
\varkappa(t,\la) = \frac{n^2}{4}\frac{p_c}{\varrho_c}\frac{(\theta'(t))^2}{\theta(t)}\Gamma_1(t) + \O(1), \quad t\nearrow R.
\]
Since $s_{\tpi}=1$, $\Gamma_1(t)>0$, $\theta_n(t)>0$, $t\in[1,R)$ we have $(s_{\pi}\varkappa(\cdot,\la))_- = \O(1)$ so condition \eqref{rem:SII.kappac} is satisfied,~too. 

\subsubsection*{Essential spectrum}

By what was shown above, Theorem \ref{prop:suff} \ref{prop:suff.III} applies and, together with $\sessreg(\cA_{[1,R)})\!=\!\{0\}$ $=\Lambda_{\beta}(D)$, we obtain
\[
 \sesssing(\cA_{[1,R)}) = \emptyset, \quad \sess(\cA_{[1,R)}) = \sessreg(\cA_{[1,R)}) = \{0\}.
\]
for every closed symmetric extension $\cA_{[1,R)}$ of $\cA_0$ in $L^2((1,R)) \oplus L^2((1,R))$. This proves the conjecture in \cite{ILLT13} and yields, finally, 
that
\[
 \sess(\cA) = \sessreg(\cA) = \{0\} 
\]
for every closed symmetric extension $\cA$ of $\cA_0$ in $L^2((0,R)) \oplus L^2((0,R))$. 
\subsubsection*{Essential spectral radius}
Having proved the conjecture above, we can now conclude that $r_{\rm{ess}}(\wt\cA)=0$.

\pagebreak


\bigskip

\noindent
{\bf Acknowledgements.} \ 
The authors thank the referee for valuable comments and they
gratefully acknowledge the support of the \emph{Swiss National Science Foundation}, SNF, grant no.\ $200020\_146477$ (O.\,O.\ Ibrogimov and C.\ Tretter) as well as Ambizione grant no.\ PZ00P2$\_154786$ (P.\ Siegl). C.\ Tretter also thanks the \emph{Knut och Alice Wal\-lenbergs Stiftelse}, Sweden, for a guest professorship and the Matematiska institutionen at Stockholms universitet for the kind hospitality.

{\small
	\bibliographystyle{acm}
	\bibliography{ist}

\begin{thebibliography}{10}

\bibitem{Adams-2003}
{\sc Adams, R.~A., and Fournier, J. J.~F.}
\newblock {\em Sobolev spaces}, second~ed.
\newblock Elsevier/Academic Press, Amsterdam, 2003.

\bibitem{ALMS94}
{\sc Atkinson, F.~V., Langer, H., Mennicken, R., and Shkalikov, A.~A.}
\newblock The essential spectrum of some matrix operators.
\newblock {\em Math. Nachr. 167\/} (1994), 5--20.

\bibitem{BEY95}
{\sc Beyer, H.~R.}
\newblock The spectrum of adiabatic stellar oscillations.
\newblock {\em J. Math. Phys. 36}, 9 (1995), 4792--4814.

\bibitem{BEH}
{\sc Blank, J., Exner, P., and Havl{\'i}\v{c}ek, M.}
\newblock {\em Hilbert space operators in quantum physics}, second~ed.
\newblock Theoretical and Mathematical Physics. Springer, New York; AIP Press,
  New York, 2008.

\bibitem{CHAN57}
{\sc Chandrasekhar, S.}
\newblock {\em An introduction to the study of stellar structure}.
\newblock Dover Publications, Inc., New York, N. Y., 1957.

\bibitem{EE87}
{\sc Edmunds, D.~E., and Evans, W.~D.}
\newblock {\em Spectral theory and differential operators}.
\newblock Oxford University Press, New York, 1987.

\bibitem{FMM00}
{\sc Faierman, M., Mennicken, R., and M{\"o}ller, M.}
\newblock The essential spectrum of a system of singular ordinary differential
  operators of mixed order. {II}. {T}he generalization of {K}ako's problem.
\newblock {\em Math. Nachr. 209\/} (2000), 55--81.

\bibitem{Gla66}
{\sc Glazman, I.~M.}
\newblock {\em Direct methods of qualitative spectral analysis of singular
  differential operators}.
\newblock Jerusalem, 1965; Daniel Davey \& Co., Inc., New York, 1966.

\bibitem{HMN99}
{\sc Hardt, V., Mennicken, R., and Naboko, S.}
\newblock Systems of singular differential operators of mixed order and
  applications to {$1$}-dimensional {MHD} problems.
\newblock {\em Math. Nachr. 205\/} (1999), 19--68.

\bibitem{HW96}
{\sc Hardt, V., and Wagenf{\"u}hrer, E.}
\newblock {Spectral properties of a multiplication operator}.
\newblock {\em Math. Nachr. 178\/} (1996), 135--156.

\bibitem{ILLT13}
{\sc Ibrogimov, O.~O., Langer, H., Langer, M., and Tretter, C.}
\newblock Essential spectrum of systems of singular differential equations.
\newblock {\em Acta Sci. Math. (Szeged) 79}, 3-4 (2013), 423--465.

\bibitem{KK1}
{\sc Kac, I., and Krein, M.}
\newblock {R-functions--analytic functions mapping the upper halfplane into
  itself}.
\newblock {\em Amer. Math. Soc. Transl.(2) 103}, 1 (1974), 1--18.

\bibitem{Kako87}
{\sc Kako, T.}
\newblock Essential spectrum of linearized operator for {MHD} plasma in
  cylindrical region.
\newblock {\em Z. Angew. Math. Phys. 38}, 3 (1987), 433--449.

\bibitem{Kato}
{\sc Kato, T.}
\newblock {\em Perturbation theory for linear operators}.
\newblock Springer-Verlag, Berlin, 1995.
\newblock Reprint of the 1980 edition.

\bibitem{KLN08}
{\sc Kurasov, P., Lelyavin, I., and Naboko, S.}
\newblock On the essential spectrum of a class of singular matrix differential
  operators. {II}. {W}eyl's limit circles for the {H}ain-{L}{\"u}st operator
  whenever quasi-regularity conditions are not satisfied.
\newblock {\em Proc. Roy. Soc. Edinburgh Sect. A 138}, 1 (2008), 109--138.

\bibitem{KN03}
{\sc Kurasov, P., and Naboko, S.}
\newblock Essential spectrum due to singularity.
\newblock {\em J. Nonlinear Math. Phys. 10\/} (2003), 93--106.

\bibitem{MT07}
{\sc Marletta, M., and Tretter, C.}
\newblock Essential spectra of coupled systems of differential equations and
  applications in hydrodynamics.
\newblock {\em J. Differential Equations 243}, 1 (2007), 36--69.

\bibitem{MNT02}
{\sc Mennicken, R., Naboko, S., and Tretter, C.}
\newblock Essential spectrum of a system of singular differential operators and
  the asymptotic {H}ain-{L}{\"u}st operator.
\newblock {\em Proc. Amer. Math. Soc. 130}, 6 (2002), 1699--1710.

\bibitem{Moeller04}
{\sc M{\"o}ller, M.}
\newblock The essential spectrum of a system of singular ordinary differential
  operators of mixed order. {III}. {A} strongly singular case.
\newblock {\em Math. Nachr. 272\/} (2004), 104--112.

\bibitem{Nussbaum70}
{\sc Nussbaum, R.~D.}
\newblock The radius of the essential spectrum.
\newblock {\em Duke Math. J. 37\/} (1970), 473--478.

\bibitem{QCH11}
{\sc Qi, J., and Chen, S.}
\newblock Essential spectra of singular matrix differential operators of mixed
  order.
\newblock {\em J. Differential Equations 250}, 12 (2011), 4219--4235.

\bibitem{Reed4}
{\sc Reed, M., and Simon, B.}
\newblock {\em Methods of modern mathematical physics {IV}}.
\newblock Academic Press, New York-London, 1978.

\bibitem{Rollins}
{\sc Rollins, L.~W.}
\newblock Criteria for discrete spectrum of singular selfadjoint differential
  operators.
\newblock {\em Proc. Amer. Math. Soc. 34\/} (1972), 195--200.

\bibitem{Tre08}
{\sc Tretter, C.}
\newblock {\em Spectral theory of block operator matrices and applications}.
\newblock Imperial College Press, London, 2008.

\bibitem{Zettl}
{\sc Zettl, A.}
\newblock {\em Sturm-{L}iouville theory}, vol.~121 of {\em Mathematical Surveys
  and Monographs}.
\newblock American Mathematical Society, Providence, RI, 2005.

\end{thebibliography}
}

\end{document}